\newtheorem{Thm}{Theorem}[section]
\newtheorem{Cor}[Thm]{Corollary}
\newtheorem{Lem}[Thm]{Lemma}
\newtheorem{Prop}[Thm]{Proposition}
\theoremstyle{definition}
\newtheorem{Def}[Thm]{Definition}
\newtheorem{Exm}[Thm]{Example}
\newtheorem{Rk}[Thm]{Remark}
\def \ZZ{{\mathbb{Z}}}
\def \QQ{{\mathbb{Q}}}
\def \RR{{\mathbb{R}}}
\def \FF{{\mathbb{F}}}
\def \PP{{\mathbb{P}}}
\def \HH{{\mathbb{H}}}
\def \TTT{{\mathcal{T}}}
\def\Ell{\mathop{\mathrm{Ell}}}
\def\mod{\mathop{\mathrm{mod}}}
\def\vert{\mathop{\mathrm{vert}}}
\def\star{\mathop{\mbox{\Large$*$}}}
\def\Star{\mathop{\mbox{\LARGE$*$}}}
\def\Cl{\mathop{\mathrm{Cl}}}
\def\Cusp{\mathop{\mathrm{Cusp}}}
\def\Gamma{\varGamma}
\def\Delta{\varDelta}
\begin{document}

\begin{center}
\bf{\LARGE{On the automorphisms of the Drinfeld modular groups}}
\end{center}
\bigskip
\bigskip

\begin{center}
{A.~W.~MASON AND ANDREAS SCHWEIZER\footnote{The second author was supported by the Basic
Science Research Program through the National Research Foundation of Korea (NRF) funded by the Ministry
of Education (NRF-2022R1A2C1010487).}}
\end{center}

\title[Automorphisms of Drinfeld modular groups]{{}}

\address{A.~W.~Mason, 
Department of Mathematics, University of Glasgow,
Glasgow G12 8QW, Scotland, U.K.}
\email{awm@maths.gla.ac.uk}

\address{Andreas Schweizer,
Department of Mathematics Education, Kongju National University, 
Gongju 32588, South Korea}
\email{schweizer@kongju.ac.kr}

\begin{abstract}
Let $A$ be the ring of elements in an algebraic function field $K$ over $\mathbb{F}_q$ which are integral 
outside a fixed place $\infty$.  In contrast to the classical modular group $SL_2(\mathbb{Z})$ and the 
Bianchi groups, the {\it Drinfeld modular group} $G=GL_2(A)$ is not finitely generated and its automorphism 
group $\mathrm{Aut}(G)$ is uncountable. Except for the simplest case $A=\mathbb{F}_q[t]$ not much is 
known about the generators of $\mathrm{Aut}(G)$ or even its structure. We find a set of generators of 
$\mathrm{Aut}(G)$ for a new case.
\par
On the way, we show that {\it every} automorphism of $G$ acts on both, the {\it cusps} and the 
{\it elliptic points} of $G$. Generalizing a result of Reiner for $A=\mathbb{F}_q[t]$ we describe for each 
cusp an uncountable subgroup of $\mathrm{Aut}(G)$ whose action on $G$ is essentially defined on the 
stabilizer of that cusp. In the case where $\delta$ (the degree of $\infty$) is $1$, the elliptic points are 
related to the isolated vertices of the quotient graph $G\setminus\mathcal{T}$ of the Bruhat-Tits tree. 
We construct an infinite group of automorphisms of $G$ which fully permutes the isolated vertices with 
cyclic stabilizer.
\\ \\
{\bf 2020 Mathematics Subject Classification:} 11F06, 20E08, 20E36, 20F28, 20G30 
\\ \\
{\bf Keywords:} Drinfeld modular group; general automorphism; cusp; elliptic point; Reiner automorphism; 
cyclic spike automorphism
\end{abstract}

\maketitle
\setcounter{section}{-1}

\section{Introduction.}\label{intro}

The introduction of Drinfeld modules \cite{Drinfeld} and Drinfeld modular curves has revolutionized 
the arithmetic theory of function fields. As function field analog of $\QQ$ one can consider any 
algebraic function field $K$ of one variable with a finite constant field $\FF_q$. The analog of $\ZZ$ 
then will be the (Dedekind) ring $A$ of elements of $K$ that are integral outside a chosen place $\infty$.
\par
Taking the place of the classical modular group $SL_2(\ZZ)$ and its action by M\"obius 
transformations on $\PP_1(\QQ)$, $\PP_1(\RR)$ and the complex upper halfplane $\HH$ is the 
{\it Drinfeld modular group} $G=GL_2(A)$ and its action by M\"obius transformations on $\PP_1(K)$,
$\PP_1(K_{\infty})$ and the Drinfeld upper halfplane $\Omega=C_\infty -K_\infty$. 
Here $K_\infty$ is the completion of $K$ at the place $\infty$, and $C_\infty$ is the completion of an 
algebraic closure of $C_\infty$.
\par
The simplest and best understood of these situations is the one where $K=\FF_q (t)$ is a rational
function field and $\infty$ the degree valuation, so $A=\FF_q [t]$.
\\ \\
In this paper we will concentrate on certain group theoretic aspects, more specifically, the 
automorphisms of such a modular group. For these questions the two theories indeed show the biggest 
difference, due to the following simple facts.
\par
The modular group $SL_2(\ZZ)$ (and the Bianchi groups $SL_2({\mathcal O}_d)$ where ${\mathcal O}_d$
is the ring of integers in an imaginay quadratic number field) are finitely generated. Moreover, their 
automorphisms groups are known \cite{HuaR}, \cite{SmillieVogt} and they form finite extensions of the 
group of inner automorphisms. By contrast, for Drinfeld modular groups $G$,

\begin{itemize}
\item $G$ is not finitely generated.
\item $\mathrm{Aut}(G)$ is uncountable and in particular not finitely generated.
\end{itemize}

The second property is of course only possible because the first one holds.
Both result from the fact that each stabilizer of a cusp contains an infinite-dimensional 
$\FF_q$-vector space. The size of $\mathrm{Aut}(G)$ represents a two-dimensional anomaly. From classical algebraic K-theory applied to arithmetic groups it is known \cite[4.3.15, p.175]{Hahn} that, when $n \geq 3$, $GL_n(A)$ is finitely generated so that then $\mathrm{Aut}(GL_n(A))$ is only countably infinite.
\par
These properties make finding a set of generators for $\mathrm{Aut}(G)$ a difficult task, and finding all
relation between them looks almost hopeless. Indeed, there is essentially only one case for which a set of
generators of $\mathrm{Aut}(G)$ is known \cite{Reiner}, namely $A=\FF_q[t]$ for any finite field $\FF_q$. 
In Section \ref{sect:applications} we will add another example to this list, this time with $q=2$ and 
$g=\delta=1$.
\par
But in general these properties suggest that problems that are slightly modified and look somewhat
easier might be more appropriate.
\begin{itemize}
\item[(1)] Find properties that all automorphisms of $G$ have.
\item[(2)] Find automorphism of $G$ that have a structural meaning.
\item[(3)] Find automorphisms that generate a huge subgroup of $\mathrm{Aut}(G)$.
\end{itemize}

\noindent
In \cite{MSquasi} we contributed to Question (2) by a thorough investigation of what we called the 
{\it quasi-inner automorphisms}. They are given by conjugating $G$ with elements from the normalizer 
of $G$ in $GL_2(K)$, and as such they automatically come equipped with an action as M\"obius transformations
on $K_{\infty}$, the Drinfeld upper half-plane $\Omega$, the Bruhat-Tits tree $\TTT$, and all related objects
like the Drinfeld modular curve, the quotient graph $G\setminus\TTT$, the cusps of $G$, the elliptic points.
But these automorphisms only form a finite extension of $\mathrm{Inn}(G)$ with $Quinn(G)$ (their quotient
by $\mathrm{Inn}(G)$) being isomorphic to $\mathrm{Cl}(A)_2$, the $2$-torsion in the ideal class group
of $A$ (see \cite{Cremona}).
\par
In the current paper we are working more in the direction of Questions (1) and (3). But a priori a general 
element of $\mathrm{Aut}(G)$ only comes with an action on group theoretic objects like subgroups of $G$ 
or conjugacy classes. So its action on other objects has to be defined indirectly from its action on the 
stabilizers of these objects.
\par
Here we recall that the cusps $\mathrm{Cusp}(H)$ of a finite index subgroup $H$ of $G$ are the
(finitely many) orbits under the action of $H$ on $\widehat{K}$. They are exactly the points that have 
to be filled in to make the quotient space $H\setminus\Omega$ into the $C_{\infty}$-analog of 
compact Riemann surface.
Moreover, $\mathrm{Cusp}(G)$ is in bijection with $\mathrm{Cl}(A)$, the ideal class group of the 
Dedekind ring $A$.
\par
In Section \ref{sect:cusps} we show that the stabilizer in $G$ of a point from $\PP_1(K)$ stabilizes \it only 
\rm that point, and hence the cusps of $G$ are in bijection with their stabilizers. So from the natural action 
of $\mathrm{Aut}(G)$ on these stabilizers we obtain the following result.

\begin{Thm}\label{intro1} 
$\mathrm{Aut}(G)$ acts on $\mathrm{Cusp}(G)$.
\end{Thm}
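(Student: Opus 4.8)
The plan is to turn the assertion into a purely group-theoretic statement, so that it is visible to an arbitrary $\phi\in\mathrm{Aut}(G)$. Write $G_s$ for the stabilizer in $G$ of $s\in\PP_1(K)$. By the result of Section~\ref{sect:cusps} quoted above, $G_s$ stabilizes only $s$, so $s\mapsto G_s$ is injective; since $gG_sg^{-1}=G_{g(s)}$ for all $g\in G$, this map intertwines the $G$-action on $\PP_1(K)$ with conjugation of subgroups. Hence two points lie in the same $G$-orbit if and only if their stabilizers are $G$-conjugate, and the cusps (the $G$-orbits on $\PP_1(K)$) correspond bijectively to the $G$-conjugacy classes of the subgroups $G_s$. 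Since $\mathrm{Aut}(G)$ always permutes $G$-conjugacy classes of subgroups, it suffices to show that it permutes the family $\{G_s : s\in\PP_1(K)\}$ itself.

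To make this family intrinsic I would isolate the unipotent part of each stabilizer. Let $p$ be the characteristic of $\FF_q$. A $p$-element $u\in G$ satisfies $(u-1)^{p^k}=0$, so the $p$-elements are exactly the nontrivial unipotent matrices; each such $u$ has a single fixed point $s_u\in\PP_1(K)$, namely its eigendirection, which is rational because the eigenvalue $1$ lies in $K$. The key lemma is that commuting nontrivial unipotents share this fixed point: if $uv=vu$ then $v$ preserves $\mathrm{Fix}(u)=\{s_u\}$, forcing $s_v=s_u$. Putting
\[
U_s=\{1\}\cup\{u\in G : u\text{ unipotent},\ u(s)=s\},
\]
one checks that each $U_s$ is an elementary abelian $p$-subgroup (an infinite-dimensional $\FF_q$-space), that $U_s\cap U_{s'}=\{1\}$ for $s\neq s'$, and---by the lemma---that the $U_s$ are precisely the maximal elementary abelian $p$-subgroups of $G$. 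Finally $G_s=N_G(U_s)$: conjugation carries $U_s$ to $U_{g(s)}$, and $U_{g(s)}=U_s$ forces $g(s)=s$ because distinct points give distinct (indeed intersection-trivial) subgroups.

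With this in hand the theorem is almost formal. The maximal elementary abelian $p$-subgroups form a group-theoretically defined family, so any $\phi\in\mathrm{Aut}(G)$ satisfies $\phi(U_s)=U_{s'}$ for some $s'$, and therefore
\[
\phi(G_s)=\phi\bigl(N_G(U_s)\bigr)=N_G\bigl(\phi(U_s)\bigr)=N_G(U_{s'})=G_{s'} .
\]
Thus $\phi$ permutes $\{G_s\}$; being an automorphism it preserves $G$-conjugacy, so it descends to a permutation $[s]\mapsto[s']$ of conjugacy classes, i.e.\ of $\mathrm{Cusp}(G)$, and the verification that this assignment is well defined on orbits and compatible with composition is routine. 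I expect the whole difficulty to sit in the second paragraph: proving the commuting-unipotents lemma and, above all, excluding any exotic large $p$-subgroup, so that the cusp stabilizers really are pinned down by the group structure alone and $\mathrm{Aut}(G)$ has no choice but to respect them. The bijection with stabilizers and the action axioms are by comparison harmless.
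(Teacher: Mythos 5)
Your argument is correct, and it reaches the theorem by a genuinely different route from the paper. The paper's proof (Theorem \ref{rational points}) is tree-theoretic: it exhausts $G(s)$ by the finite subgroups $G_n(s)$ of Lemma \ref{list}, invokes \cite[Proposition 2, p.76]{Serre} to place $\sigma(G_n(s))$ inside a vertex stabilizer $G_v$, uses the classification of vertex stabilizers from \cite{MSstabilizer} to conclude $\sigma(G_n(s))\leq G(s')$, and then pins down a single $s'$ for all $n$ via Lemma \ref{only} together with the commutator identities of Lemma \ref{list}(iv) (whence the case split $q>2$ versus $q=2$); the equality $\sigma(G(s))=G(s')$ follows by running the same argument for $\sigma^{-1}$. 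You instead make the cusp stabilizers visible to automorphisms by pure linear algebra: the nontrivial $p$-elements of $G$ are exactly the nontrivial unipotents, commuting nontrivial unipotents share their unique rational fixed point, hence every elementary abelian $p$-subgroup lies in some $U(s)$, the $U(s)$ are precisely the maximal elementary abelian $p$-subgroups, and $G(s)=N_G(U(s))$; after that, invariance under any $\sigma\in\mathrm{Aut}(G)$ is formal. Your commuting-fixed-point lemma is essentially the paper's Lemma \ref{only}, but your maximality characterization replaces the entire tree apparatus: no exhaustion by finite subgroups, no appeal to \cite{Serre} or \cite{MSstabilizer}, no case distinction on $q$, and you obtain Corollary \ref{unipotent} ($\sigma(U(s))=U(s')$) en route. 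What the paper's route buys is coherence with the infrastructure used throughout the article (the half-line exhaustion of Lemma \ref{list} reappears in Section \ref{sect:reiner}, and the vertex-stabilizer classification is needed anyway in Sections \ref{sect:elliptic} and \ref{sect:spike}, where the relevant stabilizers contain no unipotents and your method would not apply); what yours buys is a shorter, essentially self-contained proof of this particular theorem. Two small points: the injectivity of $s\mapsto G(s)$, which you quote from the paper, already follows from your own lemma (a nontrivial element of $U(s)$ fixes only $s$), so nothing circular remains; and your closing worry about ``exotic large $p$-subgroups'' is already settled by that same lemma, since any elementary abelian $p$-subgroup consists of commuting unipotents---the only fact to verify separately is $U(s)\neq\{1\}$ for every $s\in\PP_1(K)$, e.g.\ for $s=[a:c]$ with $a,c\in A$ the matrix $I_2+\left(\begin{smallmatrix}-ac&a^2\\-c^2&ac\end{smallmatrix}\right)$ lies in $U(s)$ (the paper quotes this nontriviality from \cite{MasonEdin}).
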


\noindent From Example \ref{Ex3cusps} we will see that in general this action is not transitive.
But as a Corollary we obtain that if $H$ is a finite index subgroup of $G$ and $\sigma$ any 
automorphism of $G$, then $H$ and $\sigma(H)$ have the same number of cusps.
\par
For elliptic points, which will be treated in 
Section \ref{sect:elliptic},
each stabilizer is cyclic of order $q^2 -1$. But the situation is slightly more complicated, as such
a stabilizer has $2$ fixed points, and the question is whether they give rise to the same elliptic point 
or not. Correspondingly, the set of elliptic points $\mathrm{Ell}(G)$ can be divided into two
subsets
$$\mathrm{Ell}(G)^{=} = \{G\omega:\omega \in E(G),\;G\omega=G\overline{\omega}\},$$
and
$$\mathrm{Ell}(G)^{\neq} = \{G\omega:\omega \in E(G),\;G\omega \neq G\overline{\omega}\}.$$
Luckily, the condition $G\omega=G\overline{\omega}$ is equivalent to the stabilizer $G^{\omega}$
being maximally finite in $G$. Since the action of $\mathrm{Aut}(G)$ on the stabilizers respects this 
group-theoretic property, we can conclude that

\begin{Thm}\label{intro3} 
$\mathrm{Aut}(G)$ acts on both $\mathrm{Ell}(G)^{=}$ and  $\overline{\mathrm{Ell}}(G)^{\neq}$
where
$$\overline{\mathrm{Ell}}(G)^{\neq} = \{\{G\omega,G\overline{\omega}\}:\omega \in E(G),\;G\omega \neq G\overline{\omega}\}.$$
\end{Thm}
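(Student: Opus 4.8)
The plan is to reduce everything to the purely group-theoretic action of $\mathrm{Aut}(G)$ on conjugacy classes of the cyclic stabilizers, and then transport that action back to the elliptic points. First I would record the dictionary between elliptic points and stabilizers. Each $\omega \in E(G)$ has stabilizer $G^\omega$ cyclic of order $q^2-1$, and since a nontrivial element of finite order fixes precisely its two fixed points $\omega,\overline{\omega}$, every element of $G^\omega$ fixes $\overline{\omega}$ as well, so $G^\omega = G^{\overline{\omega}}$. Thus a single cyclic subgroup $C$ of order $q^2-1$ accounts for the unordered pair of orbits $\{G\omega, G\overline{\omega}\}$. Replacing $\omega$ by $g\omega$ conjugates $C$ by $g$, so the assignment sending a pair $\{G\omega, G\overline{\omega}\}$ to the conjugacy class $[G^\omega]$ is well defined, independent of the chosen representative and of the labelling of $\omega$ versus $\overline{\omega}$.

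Next I would verify that this assignment is a bijection onto the set $\mathcal{C}$ of conjugacy classes of maximal finite cyclic subgroups of order $q^2-1$. Surjectivity is immediate, since any such $C$ is the stabilizer of each of its two fixed points. For injectivity, if $[G^{\omega_1}]=[G^{\omega_2}]$ then $G^{\omega_2}=hG^{\omega_1}h^{-1}=G^{h\omega_1}$ for some $h\in G$, and comparing fixed-point sets forces $\{\omega_2,\overline{\omega_2}\}=\{h\omega_1,h\overline{\omega_1}\}$, so the two pairs of orbits coincide. Invoking the stated equivalence that $G\omega=G\overline{\omega}$ holds if and only if $G^\omega$ is maximally finite in $G$, this bijection restricts to $\mathrm{Ell}(G)^{=}\leftrightarrow \mathcal{C}^{=}$ (the classes of maximally finite subgroups, for which the pair collapses to the single orbit $G\omega$) and to $\overline{\mathrm{Ell}}(G)^{\neq}\leftrightarrow \mathcal{C}^{\neq}$ (the remaining classes).

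The group-theoretic input is that $\mathrm{Aut}(G)$ acts on $\mathcal{C}$: an automorphism $\sigma$ sends a finite cyclic subgroup to a finite cyclic subgroup of the same order, preserves maximal finiteness (being finite and not properly contained in any finite subgroup is invariant under any isomorphism of $G$), and carries $[C]$ to $[\sigma(C)]$. Hence $\sigma$ permutes $\mathcal{C}$ and preserves the partition $\mathcal{C}=\mathcal{C}^{=}\sqcup\mathcal{C}^{\neq}$. Transporting this action through the two bijections established above then yields the desired actions of $\mathrm{Aut}(G)$ on $\mathrm{Ell}(G)^{=}$ and on $\overline{\mathrm{Ell}}(G)^{\neq}$.

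The main obstacle is not the group theory but the bookkeeping in the $\neq$ case. An individual point of $\mathrm{Ell}(G)^{\neq}$ carries no canonical automorphism action, because its stabilizer only remembers the unordered pair $\{G\omega,G\overline{\omega}\}$ and not which of the two fixed points one started from; so $\sigma$ cannot in general be forced to respect $G\omega$ and $G\overline{\omega}$ separately. Passing to the unordered pairs $\overline{\mathrm{Ell}}(G)^{\neq}$ is precisely what removes this ambiguity, and the care needed is to confirm that the pair-to-class correspondence is genuinely bijective (the injectivity argument above), so that the well-defined action on $\mathcal{C}^{\neq}$ pulls back unambiguously to an action on pairs. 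I would also make explicit that maximal finiteness is an isomorphism invariant, which is exactly what allows $\sigma$ to respect the split between the two subsets rather than merely permuting all elliptic stabilizers indiscriminately.
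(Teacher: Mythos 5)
Your proof is correct and follows essentially the same route as the paper: the bijection between unordered pairs $\{G\omega,G\overline{\omega}\}$ and (the $G$-conjugacy classes of) the cyclic stabilizers of order $q^2-1$, combined with the observation that maximal finiteness is an automorphism-invariant group-theoretic property — this is exactly the paper's chain Lemma \ref{bijcyclic}, Theorem \ref{acts1}, Corollary \ref{acts2}, Theorem \ref{surjective}, Corollary \ref{acts4}, merely compressed by working at the level of conjugacy classes from the start. One caveat: you have the dichotomy labelled backwards, since by Proposition \ref{poss} the case $G\omega=G\overline{\omega}$ is the one where $\FF_{q^2}^*\cong G^{\omega}\lneqq G_v\cong GL_2(\FF_q)$, so these stabilizers are \emph{not} maximally finite, whereas $G\omega\neq G\overline{\omega}$ is the maximally finite case; thus $\Ell(G)^{=}$ corresponds to $(\mathcal{C}_{nm})^G$ and $\overline{\Ell}(G)^{\neq}$ to $(\mathcal{C}_{mf})^G$, the opposite of what you wrote (you followed the wording in the introduction, which misstates this equivalence relative to the body of the paper). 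The swap is immaterial to your argument, because all it uses is that the partition of the stabilizer classes is preserved by every automorphism, which holds whichever side is the maximally finite one.
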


In the next two sections we turn things around and use cusps and certain elliptic points to construct
big groups of automorphisms of $G$. 
\par
In Section \ref{sect:reiner} for every cusp we define an uncountable group of automorphisms.  We call
them {\it Reiner automorphisms} because the original idea for the construction, for the ring $A=\FF_q[t]$,
is in \cite{Reiner}. In \cite{MSJPAA} we had generalized these automorphisms to any $A$ but still only for 
the cusp $\infty$ (and used them to map congruence subgroups to non-congruence subgroups).
\par
In Section \ref{sect:spike} we pick up a result from \cite{MSMZ} again, namely that isolated vertices
of $G\setminus\TTT$ with cyclic stabilizer lead to a splitting of $G$ as a product amalgamated along 
the centre. This allows to define automorphisms separately on the factors of the product,  
provided one keeps the centre elementwise fixed. In the first step, Corollary \ref{Wreath} this leads to 
a wreath product that fully permutes the cyclic spikes. But surprisingly it also shows (Proposition 
\ref{notgraph}) that not every such automorphism induces an automorphism of the quotient graph
$G\setminus\TTT$.
\par
Inspired by some more classical results on automorphisms of free products, in Definition 
\ref{cyclicspikeauto} we extend this wreath product to an infinite group of automorphisms, which we 
call {\it cyclic spike automorphisms}.
\par
Using the description of the generators of $\mathrm{Aut}(GL_2(\FF_2[t]))$ from \cite{Reiner}, in
Section \ref{sect:applications} we finally obtain the following result.

\begin{Thm}
If $A=\FF_2[x,y]$ with $y^2 +y= x^3 +x+1$, then the inner automorphisms, Reiner automorphisms, 
and the cyclic spike automorphisms together generate $\mathrm{Aut}(G)$.
\end{Thm}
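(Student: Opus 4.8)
The plan is to reduce an arbitrary $\sigma\in\mathrm{Aut}(G)$ to a product of the three named families by combining the explicit Bruhat-Tits structure of $G$ with the two equivariance statements, Theorems \ref{intro1} and \ref{intro3}. First I would make the action of $G$ on the tree $\TTT$ completely explicit. Since the only $\FF_2$-rational point of the elliptic curve $y^2+y=x^3+x+1$ is the point at infinity, $A$ is a principal ideal domain, so $\Cl(A)$ is trivial and $G$ has exactly one cusp; with $q=2$, $g=1$ and $\delta=1$ the quotient graph $G\setminus\TTT$ can then be determined by hand. This exhibits $G$ as the fundamental group of a finite graph of groups in which the unique cusp contributes the stabilizer $G^{\infty}\cong(A,+)$, the infinite-dimensional $\FF_2$-vector space of unipotent upper-triangular matrices, while the finitely many isolated vertices with cyclic stabilizer contribute the elliptic subgroups of order $q^2-1=3$. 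Because the centre $\FF_2^{\ast}I$ of $G$ is trivial, the splitting of Section~\ref{sect:spike} along the centre becomes a genuine free-product decomposition $G\cong H\ast C_1\ast\cdots\ast C_r$, in which each $C_i\cong\ZZ/3$ is a cyclic spike and $H$ is the ``cuspidal part'' carrying the single cusp.

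Given $\sigma$, the reduction then runs in three stages. By Theorem \ref{intro1} $\sigma$ permutes the cusps; as there is only one, $\sigma$ fixes it and hence carries $G^{\infty}$ to a conjugate of itself. Composing $\sigma$ first with an inner automorphism and then with a Reiner automorphism attached to this cusp, I can arrange that $\sigma$ fixes $G^{\infty}$ and acts on it in a controlled way. By Theorem \ref{intro3} $\sigma$ permutes the elliptic points, equivalently the spikes $C_1,\dots,C_r$; since by Corollary \ref{Wreath} the cyclic spike automorphisms already realize the full symmetric group on the spikes together with the relevant partial conjugations, a further composition with a cyclic spike automorphism leaves an automorphism that stabilizes each $C_i$ and fixes the factor $H$ setwise.

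At this point I would invoke the automorphism theory of free products (of Kurosh--Fouxe-Rabinovitch type): an automorphism preserving the free factors up to conjugacy is a product of factor automorphisms, partial conjugations, and permutations of isomorphic factors. The permutations and partial conjugations are exactly the cyclic spike and inner automorphisms, and each $\mathrm{Aut}(C_i)=\mathrm{Aut}(\ZZ/3)$ is already realized inside the spike and inner families; the one remaining ingredient is $\mathrm{Aut}(H)$. Here the cuspidal part $H$---the finite nucleus together with its single attached cusp ray---is essentially the Nagao amalgam, and I would reduce its automorphisms to those of $GL_2(\FF_2[t])$, so that Reiner's determination \cite{Reiner} of $\mathrm{Aut}(GL_2(\FF_2[t]))$ forces every automorphism of $H$ to be a product of inner and Reiner automorphisms. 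Reassembling the three stages writes the original $\sigma$ as a product of inner, Reiner, and cyclic spike automorphisms, as claimed.

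The main obstacle is twofold. The first, more computational part is producing the explicit decomposition of $G$---pinning down $G\setminus\TTT$ for this genus-one field, the exact number $r$ of cyclic spikes, and the precise structure of $H$. The second, and harder, part is the rigidity step: after the cusp and spike normalizations one must show that the residual automorphism of $H$ is genuinely governed by Reiner's $\FF_2[t]$ result, i.e. that no ``exotic'' automorphism survives which respects all the group-theoretic invariants (the unique cusp, the elliptic points, the maximal-finite stabilizers) yet lies outside the three named families. Controlling the interaction in the amalgam between the uncountable unipotent factor $G^{\infty}\cong(A,+)$ and the finite elliptic subgroups---verifying that the only compatible gluings are those already realized---is where the real difficulty lies, and where the smallness of $q=2$ is used to eliminate the finitely many remaining cases.
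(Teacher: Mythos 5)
Your skeleton is in essence the paper's own: the explicit free-product decomposition $G=H\star\langle M_0\rangle\star\langle M_1\rangle$ with $H\cong GL_2(\FF_2[t])$ and two non-conjugate factors of order $3$ (Example \ref{Ex1cusp}), the Fouxe-Rabinovitch generation theorem (Theorem \ref{generators}), and Reiner's description of $\mathrm{Aut}(GL_2(\FF_2[t]))$ for $q=2$ (Proposition \ref{Reinerq=2}). Two remarks on your preliminary stages. First, they are redundant: once the factors are known to be freely indecomposable and not infinite cyclic --- for $H$ this is Lemma \ref{notfree}(b), a hypothesis of Theorem \ref{generators} that your sketch never verifies --- the Fouxe-Rabinovitch theorem applies to \emph{every} automorphism of $G$, so no prior normalization via Theorems \ref{intro1} and \ref{intro3} is needed. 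Second, your stage-two claim that the normalized automorphism ``fixes the factor $H$ setwise'' does not follow from what you did: a partial conjugation of $H$ by a non-trivial element of $C_1$ fixes every $C_i$ setwise yet moves $H$ to a conjugate intersecting it trivially. (Your stage three only needs ``up to conjugacy'', so this slip is not fatal, but it is a false intermediate assertion.)

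The genuine gap is in your reassembly, and it is precisely the point on which the paper spends the second half of its proof. Proposition \ref{Reinerq=2} writes an automorphism of $H$ as $\sigma\tau$, where $\tau$ is a Reiner automorphism of $H$ and $\sigma$ is conjugation by some $h\in H$. Extending $\tau$ by the identity on $\langle M_0\rangle\star\langle M_1\rangle$ gives a Reiner automorphism of $G$; but extending $\sigma$ by the identity on the spikes is \emph{not} an inner automorphism of $G$ (conjugation by $h$ in $G$ moves $\langle M_0\rangle$ and $\langle M_1\rangle$ to disjoint conjugates), nor is it in the wreath product of Corollary \ref{Wreath}. Your sentence ``the permutations and partial conjugations are exactly the cyclic spike and inner automorphisms'' conflates two distinct families and hides exactly this issue. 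The repair, which is the paper's final step, is available inside your own framework: replace the factor automorphism ``conjugate $H$ by $h$, fix the spikes'' by the composite of the global inner automorphism $g\mapsto hgh^{-1}$ with the two partial conjugations of $\langle M_0\rangle$ and $\langle M_1\rangle$ by $h^{-1}$, which lie in $\overline{\mathrm{CS}(G)}$ by Definition \ref{cyclicspikeauto}; this composite agrees with $\sigma$ on $H$ and is the identity on the spikes. Without this step the proof does not terminate in the three named families. Relatedly, the ``hard rigidity step'' you single out is a non-issue: since $H=GL_2(\FF_2)\star_{U(\FF_2)}U(A)$ and Nagao's amalgam for $GL_2(\FF_2[t])$ amalgamate countably-infinite-dimensional $\FF_2$-vector spaces along the same line, $H\cong GL_2(\FF_2[t])$ as abstract groups and no ``exotic'' automorphism of $H$ can exist; the real subtlety in this problem is the extension/gluing bookkeeping above, in the spirit of Remark \ref{Rkconjugate} and Example \ref{dihedral}.
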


\noindent The following list of notations is compatible with the notation in \cite{MSMZ} and 
\cite{MSquasi}, except that in \cite{MSquasi} $\mathcal{V}_c$ is denoted by $\mathcal{V}$.
\\ \\
\begin{tabular}{ll}
$K$         & an algebraic function field of one variable with
constant field $\FF_q$, where $q=p^n$;\\
$g$ & the genus of $K$;\\
$\infty$        & a chosen place of $K$;\\
$\delta$        & the degree of the place $\infty$;\\
$\Omega$ & Drinfeld's upper half plane;\\
$A$          & the ring of all elements of $K$ that are integral outside $\infty$;\\
$G$             & the Drinfeld modular group $GL_2(A)$;\\
$Z$            & the centre of $G$ (consisting of the scalar matrices);\\
$B_2(R)$             & the subgroup of upper triangular matrices in $GL_2(R)$;\\
$U(A)$          & the subgroup of upper unipotent matrices in $G$;\\
$\widehat{K}$ & $\PP_1(K)=K \cup \{\infty\}$; \\
$\Cusp(G)$ & $G \backslash \widehat{K}$;\\
$G(s)$ & the stabilizer in $G$ of $s \in\PP_1(K)$;\\
$E(G)$ & the set of elliptic elements of $G$\\
$\Ell(G)$ & $G \backslash E(G)$;\\
$\Ell(G)^{=}$  & $\{G\omega:\omega \in E(G),\;G\omega=G\overline{\omega}\}$;\\
$\Ell(G)^{\neq}$  &  $\{G\omega:\omega \in E(G),\;G\omega \neq G\overline{\omega}\}$;\\
\end{tabular}
\\ \\
\begin{tabular}{ll}
$G^{\omega}$&  the stabilizer in $G$ of $\omega \in E(G)$; \\
$\mathcal{T}$ & the Bruhat-Tits tree of $G$;\\
$G_v$ & the stabilizer in $G$ of $v \in \vert(\mathcal{T})$;\\
$r$      & $=|\mathcal{V}_c|$, the number of cyclic spikes of $G\setminus\TTT$ (if $\delta=1$).
\end{tabular}

 \section{General properties of automorphisms: cusps} \label{sect:cusps}

\noindent It is well-known that $A$ is a Dedekind domain and that $A^*=\FF_q^*$. For each $s \in \widehat{K}$ we record some well-known properties of the stabilizer $G(s)$. (See, for example, \cite{MasonEdin}.)  The subset of unipotent matrices in $G(s)$ is a non-trivial subgroup which we denote by $U(s)$.  

\begin{Lem}\label{only}  
The only element of $\widehat{K}$ fixed by any non-trivial element of $U(s)$ is $s$.
\end{Lem}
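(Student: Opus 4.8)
The plan is to reduce the statement to the elementary fact that a non-trivial unipotent matrix in $GL_2(K)$ fixes exactly one point of $\PP_1(K)$. The key reformulation is that a point of $\widehat{K}=\PP_1(K)$ is fixed by a matrix $M\in G$ under the M\"obius action precisely when the corresponding line in $K^2$ is an eigenline of $M$, i.e.\ is spanned by an eigenvector of $M$ lying in $K^2$ (the point $\infty$ corresponding to the line spanned by $\sm 1 \\ 0 \esm$). Thus counting the fixed points of $M$ in $\widehat{K}$ amounts to counting its eigenlines defined over $K$.

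So let $u\in U(s)$ be non-trivial. Since $u$ is unipotent, its characteristic polynomial is $(X-1)^2$, and $1$ is its only eigenvalue. First I would observe that $u$ cannot be semisimple: a diagonalizable unipotent matrix has minimal polynomial dividing $X-1$ and hence equals the identity, and this argument is valid in every characteristic, so it applies over our field $K$ of characteristic $p$. Consequently $u$ is a single Jordan block, so its eigenspace for the eigenvalue $1$ is one-dimensional. In particular $u$ has exactly one eigenline, and that line is automatically defined over $K$, since $u\in G\subseteq GL_2(K)$ and its eigenvalue $1$ lies in $K$.

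Combining these two observations, $u$ fixes exactly one point of $\widehat{K}$. By hypothesis $u\in U(s)\subseteq G(s)$ fixes $s$, so this unique fixed point must be $s$, which is the assertion.

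I do not expect a serious obstacle here; the only points requiring genuine care are the verification that a non-trivial unipotent element is not semisimple (so that its eigenspace is one-dimensional rather than all of $K^2$), and the identification of the fixed points in $\widehat{K}$ with the eigenlines in $K^2$. One could instead try to conjugate $s$ to $\infty$ over $GL_2(K)$ and reduce to the translations $z\mapsto z+b$ lying in $U(\infty)$, whose only fixed point is visibly $\infty$; but since $G=GL_2(A)$ does not act transitively on $\widehat{K}$, the conjugating matrix need not lie in $G$, and the eigenline argument has the advantage of avoiding this issue and treating all $s\in\widehat{K}$ uniformly.
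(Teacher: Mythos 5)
Your proof is correct. Note, however, that the paper does not actually prove this lemma: it is recorded as a well-known property of the stabilizer $G(s)$ with a citation to \cite{MasonEdin}, so there is no in-paper argument to compare against; your write-up supplies a complete, self-contained justification of what the authors take as known. The argument itself is sound and elementary: fixed points of $M\in GL_2(K)$ on $\widehat{K}=\PP_1(K)$ are exactly the $K$-rational eigenlines of $M$; a non-trivial unipotent $u$ is not semisimple (else its minimal polynomial would divide $X-1$, forcing $u=I$, in any characteristic), so $\ker(u-I)$ is a one-dimensional $K$-subspace, giving exactly one fixed point, which must be $s$. One small remark: your stated reason for avoiding the conjugation route is overly cautious. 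Counting fixed points on $\PP_1(K)$ is invariant under conjugation by \emph{any} element of $GL_2(K)$ acting on $\PP_1(K)$; one does not need the conjugating matrix, or the conjugated unipotent element, to lie in $G=GL_2(A)$. Taking $g\in GL_2(K)$ with $g(\infty)=s$, the matrix $g^{-1}ug$ is unipotent and fixes $\infty$, hence is an upper unipotent matrix over $K$, whose unique fixed point is $\infty$; transporting back gives the claim. Both routes are valid; the eigenline argument is simply the more intrinsic of the two.
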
 

\begin{Lem}\label{list}
There exists an integer $n_s \geq 1$ such that, for all $n \geq n_s$, there exists a fiinite subgroup $G_n(s)$ (resp. $U_n(s)$) of $G(s)$ (resp. $U(s)$) for which
\begin{itemize}
\item[(i)] $$\bigcup_{n \geq n_s}G_n(s)=G(s)\;and\;\bigcup_{n \geq n_s}U_n(s)=U(s).$$\\ 
\item[(ii)] $G_n(s) \leq G_{n+1}(s)$ and $U_n(s) \leq U_{n+1}(s)$.\\ 
 \item[(iii)]$[G_n(s), G_n(s)]= U_n(s)\;(q>2)$ and $G_n(s)=U_n(s)\;(q=2)$.\\ 
 \item[(iv)]$[G(s), G(s)]= U(s)\;(q>2)$ and $G(s)=U(s)\;(q=2)$.\\ 
\item[(v)]  $U_n(s) \cong \FF_{q^n}^+$.\\ 
\item[(vi)] $G_n(s) /U_n(s) \cong\FF_q^*\times \FF_q^*.$
\end{itemize}
\end{Lem}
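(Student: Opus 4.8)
The plan is to make the stabilizer $G(s)$ completely explicit and then manufacture the finite subgroups by hand, reading off (i)--(vi) from the model. First I would fix $s\in\widehat K$, let $L_s\subset K^2$ be the corresponding line, and note that any $M\in G(s)$ preserves $L_s$, hence is upper triangular, $M=\sm a & c \\ 0 & d \esm$, in a $K$-basis adapted to $L_s$. Since $M$ acts on the $K$-rational line $L_s$ by a scalar $a\in K$, the complementary eigenvalue $d=\det M/a$ also lies in $K$; both are roots of the monic characteristic polynomial in $A[x]$, hence integral over $A$, and as $A$ is integrally closed $a,d\in A$, while $ad=\det M\in A^*=\FF_q^*$ forces $a,d\in\FF_q^*$. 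Intersecting the lattice $A^2$ with $L_s$ presents $A^2$ as an extension of two rank-one projective $A$-modules, and the classification of such modules over the Dedekind domain $A$ yields
$$ G(s)\;\cong\;\Big\{\,\sm a & c \\ 0 & d \esm : a,d\in\FF_q^*,\ c\in\mathfrak c\,\Big\} $$
for a suitable nonzero fractional ideal $\mathfrak c=\mathfrak c(s)$, with $U(s)$ the subgroup $a=d=1$. This is the well-known description cited to \cite{MasonEdin} (the class of $\mathfrak c$ encoding the cusp under $\Cusp(G)\leftrightarrow\Cl(A)$). In particular $U(s)\cong\mathfrak c^+$ is the additive group of a fractional ideal, a countably infinite dimensional $\FF_q$-vector space, and $\sm a & c \\ 0 & d \esm\mapsto(a,d)$ gives $G(s)/U(s)\cong\FF_q^*\times\FF_q^*$.

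With this model in hand, parts (iii) and (iv) both reduce to one commutator. Since $G(s)/U(s)$ is abelian we have $[G(s),G(s)]\subseteq U(s)$, and a direct calculation gives
$$ \Big[\,\sm a & 0 \\ 0 & d \esm,\ \sm 1 & c \\ 0 & 1 \esm\,\Big]=\sm 1 & (ad^{-1}-1)c \\ 0 & 1 \esm . $$
For $q>2$ I would pick $a,d\in\FF_q^*$ with $ad^{-1}-1$ a nonzero scalar; as $c$ ranges over the $\FF_q$-vector space $\mathfrak c$, multiplication by this scalar is a bijection, so these commutators already exhaust $U(s)$ and $[G(s),G(s)]=U(s)$. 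For $q=2$ the torus is trivial, so $G(s)=U(s)$, establishing (iv). The identical computation run inside a finite piece will give (iii), provided the unipotent level chosen there is an $\FF_q$-subspace.

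The remaining task is to build the finite groups. Since $\mathfrak c^+$ is a countably infinite dimensional $\FF_q$-vector space, I would fix an increasing chain of $\FF_q$-subspaces $W_1\subset W_2\subset\cdots$ with $\dim_{\FF_q}W_n=n$ and $\bigcup_n W_n=\mathfrak c$ --- for instance the spans of an enumerated basis, which realizes $\dim_{\FF_q}W_n=n$ for every $n$ and lets one take $n_s=1$, or, more in keeping with the arithmetic, a refinement of the pole-order filtration $\{\mathfrak c\cap\mathcal L(m\infty)\}_m$, whose dimensions stabilize to an arithmetic progression and so take the clean form only from some threshold $n_s$ on --- and set
$$ U_n(s)=\Big\{\,\sm 1 & c \\ 0 & 1 \esm : c\in W_n\,\Big\},\qquad G_n(s)=\Big\{\,\sm a & c \\ 0 & d \esm : a,d\in\FF_q^*,\ c\in W_n\,\Big\}. $$
Because each $W_n$ is an $\FF_q$-subspace, the product and inverse formulas keep the off-diagonal entry in $W_n$, so these are genuine finite subgroups, of orders $q^n$ and $(q-1)^2q^n$. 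Then (ii) is the chain $W_n\subseteq W_{n+1}$, (i) is $\bigcup_n W_n=\mathfrak c$, (v) is $U_n(s)\cong W_n\cong\FF_{q^n}^+$, (vi) is again the map to $(a,d)$, and (iii) is the displayed commutator applied to $G_n(s)$, using the $\FF_q$-stability of $W_n$.

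The one genuinely substantive step is the structural identification in the first paragraph: passing from ``$M$ fixes $s$'' to the fractional-ideal model of $G(s)$. For non-principal cusps this is exactly where the rank-one projective module theory over $A$ --- equivalently $\Cl(A)\leftrightarrow\Cusp(G)$ --- really enters, and it is the part I would lean on \cite{MasonEdin} for. Everything after it is bookkeeping; the two points to watch are that each unipotent level $W_n$ must be an $\FF_q$-subspace (so that it is normalized by the diagonal torus and the commutator argument closes up), and that the abstract basis construction already realizes $\dim_{\FF_q}W_n=n$ for every $n$, so that one may simply take $n_s=1$.
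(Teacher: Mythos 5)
Your proof is correct, and it is essentially the paper's own route: the paper records this lemma as ``well-known'' with a citation to \cite{MasonEdin}, and the triangular model of $G(s)$ over a fractional ideal together with the surjectivity of $\psi:G(s)\to\FF_q^*\times\FF_q^*$ that you reconstruct is precisely the description from \cite{MasonEdin} that the paper itself recalls in Section \ref{sect:reiner}, after which your commutator computation and the exhausting chain of finite-dimensional $\FF_q$-subspaces are the same bookkeeping. Your observation that the finite levels must be taken as interpolated $\FF_q$-subspaces rather than, say, the vertex stabilizers of Lemma \ref{vertexstabilizer} (whose unipotent parts grow in jumps of $\delta$) is also the right reading of part (v).
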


\noindent \rm We now come to the principal result in this section. 

\begin{Thm}\label{rational points}  
Let $\sigma \in \mathrm{Aut}(G)$. Then, for all $s \in \widehat{K}$,
$$\sigma(G(s))=G(s'),$$
\noindent for some $s' \in \widehat{K}$.
\end{Thm}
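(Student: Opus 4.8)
The plan is to characterize each stabilizer $G(s)$ purely group-theoretically via its unipotent subgroup $U(s)$, and then to track how $\sigma$ moves these data. The key bridge from the abstract automorphism to geometry is that $\sigma$ preserves orders of elements, and that in characteristic $p$ an element of order $p$ is automatically unipotent: if $g^p=1$ then the minimal polynomial of $g$ divides $x^p-1=(x-1)^p$, so $1$ is its only eigenvalue. Since each $U_n(s)\cong\FF_{q^n}^+$ has exponent $p$ by Lemma \ref{list}(v), every element of $U(s)$ has order dividing $p$; hence every element of $\sigma(U(s))$ has order dividing $p$ and is therefore unipotent.

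Next I would show that $V=\sigma(U(s))$ has a common fixed point. As the image of the abelian group $U(s)=\bigcup_n U_n(s)$, it is abelian. Every nontrivial unipotent element of $G$ has a single (necessarily $K$-rational) fixed point $t\in\widehat{K}$ and so lies in $U(t)$; by Lemma \ref{only} it fixes no other point. If $u,v\in V$ are nontrivial they commute, so $v$ permutes the one-element fixed-point set of $u$, forcing the unique fixed point of $v$ to coincide with that of $u$. Thus all nontrivial elements of $V$ fix one common point $s'\in\widehat{K}$, giving $\sigma(U(s))=V\leq U(s')$.

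To promote this to the whole stabilizer I would use that $U(s)$ is normal in $G(s)$ (conjugation by an element of $G(s)$ preserves both $G(s)$ and unipotency). Hence for $g\in G(s)$ the element $h=\sigma(g)$ normalizes $V$. For any nontrivial $v\in V$ the conjugate $hvh^{-1}\in V$ fixes only $s'$, while it also fixes $h(s')$ (as $v$ fixes $s'$); therefore $h(s')=s'$, i.e. $\sigma(g)\in G(s')$. This proves the one-sided inclusion $\sigma(G(s))\subseteq G(s')$, valid for every automorphism and every point.

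Finally I would obtain equality by symmetry. Applying the inclusion to $\sigma^{-1}$ at the point $s'$ gives $\sigma^{-1}(G(s'))\subseteq G(s'')$ for some $s''$, while the first inclusion gives $G(s)\subseteq\sigma^{-1}(G(s'))$; combining, $G(s)\subseteq G(s'')$. But a containment $G(t_1)\subseteq G(t_2)$ forces $t_1=t_2$, since a nontrivial element of $U(t_1)$ then lies in $G(t_2)$ and by Lemma \ref{only} fixes only $t_1$. Hence $s''=s$ and the inclusions collapse to $\sigma(G(s))=G(s')$. The main obstacle is the opening step: passing from an abstract automorphism, which a priori respects only group structure, to the geometric notion of unipotency. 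The order-$p$ characterization of unipotent elements is exactly what makes this bridge, after which Lemma \ref{only} handles all the geometric bookkeeping.
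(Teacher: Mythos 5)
Your proof is correct, and it takes a genuinely different route from the paper's. The paper proves the theorem by exhausting $G(s)$ by the finite subgroups $G_n(s)$ of Lemma \ref{list}, invoking Serre's result that a finite subgroup of $G$ fixes a vertex of the Bruhat--Tits tree, and then using the classification of vertex stabilizers from \cite{MSstabilizer} to conclude that $\sigma(G_n(s))$ lies in some $G(s')$ (conjugate into upper triangular form); the identification of the unipotent part needed to pin down $s'$ is made group-theoretically via the commutator structure $[G_n(s),G_n(s)]=U_n(s)$, which forces a case split between $q>2$ and $q=2$. Your bridge between abstract group theory and geometry is instead the observation that in characteristic $p$ the unipotent elements of $G$ are exactly the elements of order dividing $p$, so $\sigma(U(s))$ is automatically a group of commuting unipotents; commutativity then gives a common fixed point $s'$, and normality of $U(s)$ in $G(s)$ pushes the fixing of $s'$ to all of $\sigma(G(s))$. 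Both proofs finish with the identical symmetry argument applied to $\sigma^{-1}$, using Lemma \ref{only} to collapse $G(t_1)\subseteq G(t_2)$ to $t_1=t_2$. What your approach buys: it is entirely elementary and self-contained (no tree, no Serre, no stabilizer classification), it treats all $q$ uniformly with no case distinction, and it isolates the cleanest possible automorphism-invariant characterization of unipotency ($p$-torsion) rather than the commutator characterization, which is what forces the paper's case split. What the paper's approach buys: it stays inside the tree-theoretic framework that the rest of the paper depends on, and along the way it gives finer information, namely that the image of each finite piece $\sigma(G_n(s))$ sits inside a vertex stabilizer that is conjugate to a group of upper triangular matrices with entries in a finite set.
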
 

\begin{proof} 
\rm Consider any finite subgroup $G_n(s)$ as in Lemma \ref{list}, where $n\geq n_s$. Then $\sigma(G_n(s)) \leq G_v$, for some $ v \in \vert(\TTT)$ by \cite[Proposition 2, p.76]{Serre}. There are two possibilities. If $G_v$ contains a matrix with eigenvalues {\it not} in $k$ then
$$G_v \cong \FF_{q^2}^*\;\; \mathrm{or}\;\; GL_2(\FF_q),$$ 
\noindent by \cite[Corollary 2.12]{MSstabilizer}.
\noindent We conclude then that every matrix in $G_v$ has eigenvalues in $k$. In which case, by the proofs of \cite[Theorems 2.1, 2.3]{MSstabilizer}, there  exists $g \in GL_2(K)$ and a {\it finite} subset $\mathcal{S}$ of $K$ for which
  $$g^{-1}G_vg= \left\{\left[\begin{array}{cc}\alpha&c\\0&\beta
\end{array}\right]: \alpha,\beta \in \FF_q^*,\;c\in \mathcal{S}\right\}.$$
\noindent It follows that
$$\sigma(G_n(s)) \leq G(s'),$$
\noindent where $s'=g(\infty)$. By an identical argument there exists $s''$ such that
$$(\sigma(G_n(s)) \leq) \sigma(G_{n+1}(s)) \leq G(s'').$$
\noindent Assume that $q>2$. Then by Lemma \ref{list} (iv)
$$\sigma(U_n(s)) \leq U(s') \cap U(s''),$$
\noindent in which case $s'=s''$ by Lemma \ref{only} and so
$$\sigma(G(s)) \leq G(s'),$$
\noindent by Lemma \ref{list} (i).
\noindent By an identical argument
$$\sigma^{-1}(G(s')) \leq G(s_0),$$
\noindent for some $s_0$. By the first part $G(s) \leq G(s_0)$ in which case $s=s_0$ again by Lemma \ref{list} (iv) and Lemma \ref{only} and so $\sigma(G(s))=G(s')$. The proof for the case $q=2$ is simpler. 
\end{proof}

\noindent \begin{Cor}\label{unipotent}
$$\sigma(U(s))=U(s').$$ 
\end{Cor}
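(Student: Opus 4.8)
The plan is to observe that, once Theorem \ref{rational points} is in hand, the corollary is essentially a statement about a \emph{characteristic} subgroup, so almost no new work is needed. By Theorem \ref{rational points} we have $\sigma(G(s))=G(s')$, and since $\sigma$ is an automorphism of $G$ it is injective; hence its restriction
$$\sigma|_{G(s)}:G(s)\longrightarrow G(s')$$
is a group isomorphism. The whole point is therefore to recognize that $U(s)$ admits a description inside $G(s)$ that is purely group-theoretic and hence preserved by any isomorphism, and Lemma \ref{list}(iv) supplies exactly such a description.

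First I would dispose of the case $q>2$. Here Lemma \ref{list}(iv) gives $U(s)=[G(s),G(s)]$ and likewise $U(s')=[G(s'),G(s')]$. Because an isomorphism carries the derived subgroup onto the derived subgroup, I would simply compute
$$\sigma(U(s))=\sigma\big([G(s),G(s)]\big)=\big[\sigma(G(s)),\sigma(G(s))\big]=[G(s'),G(s')]=U(s'),$$
where the middle equality uses that $\sigma|_{G(s)}$ is onto $G(s')$. This settles the claim in this case.

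The case $q=2$ is even shorter: by Lemma \ref{list}(iv) one has $G(s)=U(s)$ and $G(s')=U(s')$, so the desired identity $\sigma(U(s))=U(s')$ is literally just the conclusion $\sigma(G(s))=G(s')$ of Theorem \ref{rational points}, with no further argument required.

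I do not expect any genuine obstacle here, since all the difficulty has already been absorbed into Theorem \ref{rational points}; the only thing to be careful about is to invoke the correct branch of Lemma \ref{list}(iv) for each value of $q$ and to note explicitly that the derived-subgroup identity is an \emph{isomorphism}-invariant statement (it does not require $\sigma$ to fix $G(s)$, only to map it isomorphically onto $G(s')$).
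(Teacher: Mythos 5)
Your proof is correct and is exactly the paper's argument: the paper's own proof is the one-line remark ``Follows from Lemma \ref{list} (iv),'' and you have simply spelled out that line — the derived-subgroup characterization of $U(s)$ for $q>2$ and the equality $G(s)=U(s)$ for $q=2$, both transported through the isomorphism $\sigma|_{G(s)}\colon G(s)\to G(s')$ furnished by Theorem \ref{rational points}.
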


\begin{proof}  
\rm Follows from Lemma \ref{list} (iv).
\end{proof}

\noindent It follows that $\mathrm{Aut}(G)$ acts as a group of permutations on $\widehat{K}$.  

\begin{Cor} \label{perm}  
For each $\sigma \in \mathrm{Aut}(G)$ and $s \in \widehat{K}$  let
$$\widetilde{\sigma}: s\; \rightarrow\;s',$$
where $\sigma(G(s))=G(s')$ as above. Then $\widetilde{\sigma}$ is a well-defined permutation of $\widehat{K}$.\end{Cor}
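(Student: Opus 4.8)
The plan is to check the two assertions hidden in the word ``permutation'': first that $\widetilde{\sigma}$ is single-valued (well-defined), and then that it is a bijection of $\widehat{K}$. The existence of at least one target $s'$ with $\sigma(G(s))=G(s')$ is exactly the content of Theorem \ref{rational points}, so nothing remains to be shown there. For single-valuedness I would establish the auxiliary fact that a point of $\widehat{K}$ is determined by its stabilizer, i.e.\ that $s\mapsto G(s)$ is injective. This is where Lemma \ref{only} does the work: since $U(s')$ is a non-trivial subgroup of $G(s')$, pick any non-trivial $u\in U(s')$. If $G(s')=G(s'')$, then $u$ lies in $G(s'')$ and hence fixes $s''$; but by Lemma \ref{only} the only point of $\widehat{K}$ fixed by the non-trivial unipotent element $u$ is $s'$, forcing $s''=s'$. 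Applying this to $s',s''$ arising from two choices of target for a single $s$ shows $\widetilde{\sigma}$ is well-defined.

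With that injectivity of $s\mapsto G(s)$ in hand, bijectivity of $\widetilde{\sigma}$ is almost immediate. For injectivity of $\widetilde{\sigma}$, suppose $\widetilde{\sigma}(s_1)=\widetilde{\sigma}(s_2)=s'$. Then $\sigma(G(s_1))=G(s')=\sigma(G(s_2))$, and since $\sigma$ is an automorphism (in particular injective on subgroups), $G(s_1)=G(s_2)$; the stabilizer-determines-point fact then gives $s_1=s_2$. For surjectivity, given any $t\in\widehat{K}$ I would apply Theorem \ref{rational points} to the automorphism $\sigma^{-1}$: there exists $s\in\widehat{K}$ with $\sigma^{-1}(G(t))=G(s)$, whence $\sigma(G(s))=G(t)$ and therefore $\widetilde{\sigma}(s)=t$.

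The genuine mathematical content of the corollary sits entirely in Theorem \ref{rational points} together with Lemma \ref{only}; the corollary itself is a bookkeeping step. Consequently I do not expect a real obstacle, but the one point deserving care is the injectivity of the assignment $s\mapsto G(s)$, since the whole argument (well-definedness, injectivity, surjectivity) rests on it. It is worth recording it as a clean intermediate statement rather than re-deriving it three times. A natural continuation, implicit in the surrounding text's claim that $\mathrm{Aut}(G)$ \emph{acts} on $\widehat{K}$, is to verify the functoriality relations $\widetilde{\sigma_1\sigma_2}=\widetilde{\sigma_1}\,\widetilde{\sigma_2}$ and $\widetilde{\id}=\id$, which upgrade $\sigma\mapsto\widetilde{\sigma}$ to a homomorphism $\mathrm{Aut}(G)\to\mathrm{Sym}(\widehat{K})$; these follow by the same uniqueness argument applied to composites.
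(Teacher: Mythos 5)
Your proposal is correct and follows essentially the same route as the paper: well-definedness and injectivity come from the fact that a non-trivial unipotent element of $G(s)$ fixes only $s$ (Lemma \ref{only}, which the paper invokes together with Lemma \ref{list}(iv) to locate the unipotents group-theoretically), and surjectivity comes from running the argument for $\sigma^{-1}$ via Theorem \ref{rational points}. Your isolation of the ``stabilizer determines the point'' step as a clean intermediate statement, and the closing remark on $\sigma\mapsto\widetilde{\sigma}$ being a homomorphism, are sensible refinements of the same argument rather than a different proof.
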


\begin{proof} 
By Lemma \ref{only} and Lemma \ref{list} (iv) $\widetilde{\sigma}$ is both well-defined and injective. 
For surjectivity apply the same argument with $\sigma$ replaced by $\sigma^{-1}$.
\end{proof}

\noindent Since $A$ is a Dedekind ring it is well-known that there exists a one-to-one correspondence
$$\mathrm{Cusp}(G)\longleftrightarrow \mathrm{Cl}(A),$$
\noindent where $\mathrm{Cl}(A)$ is the ideal class group of $A$. In addition, since $A$ is an {\it arithmetic} domain, $\mathrm{Cusp}(G)$ is {\it finite}. For each $s \in  \widehat{K}$ we put
$$ G[s]= \left\{ G(g(s))=gG(s)g^{-1}: g \in G\right\}.$$ 
\\
\noindent By Theorem \ref{rational points} the action of any $\sigma \in\mathrm{Aut}(G)$ on any $G(s)$ readily extends to $G[s]$ and so the action  of $\mathrm{Aut}(G)$ can be extended to $\mathrm{Cusp}(G)$. The following are immediate consequences of Lemma \ref{only} and Corollary \ref{perm}. It is clear that
$$G[s_1]=G[s_2]\;\mathrm{ if \;and\; only \;if} \;s_2=g(s_1), \mathrm{for \;some}\;g \in G.$$
\noindent \begin{Lem}\label{cusps}  For any $\sigma \in \mathrm{Aut}(G)$ and $s \in \widehat{K}$
$$  \sigma (G[s])=G[\widetilde{\sigma}(s)].$$
\end{Lem}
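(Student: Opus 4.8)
The plan is to reduce the statement to the two facts that are already in place: that $\sigma(G(s)) = G(\widetilde{\sigma}(s))$ by the very construction of $\widetilde{\sigma}$ (Theorem \ref{rational points} together with Corollary \ref{perm}), and that an automorphism is in particular a bijection of $G$ onto itself. Nothing beyond these two inputs and the definition of $G[s]$ should be required.

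First I would unwind the definition of $G[s]$ as the conjugacy class of subgroups $\{gG(s)g^{-1}: g \in G\}$ and observe that the image of this family under $\sigma$ is computed term by term, using only that $\sigma$ is a homomorphism: $\sigma(gG(s)g^{-1}) = \sigma(g)\,\sigma(G(s))\,\sigma(g)^{-1}$. Next I would substitute $\sigma(G(s)) = G(\widetilde{\sigma}(s))$, so that each member of $\sigma(G[s])$ takes the form $\sigma(g)\,G(\widetilde{\sigma}(s))\,\sigma(g)^{-1}$. Finally, because $\sigma$ is surjective on $G$, the elements $\sigma(g)$ run through all of $G$ as $g$ does, so the family $\{\sigma(g)\,G(\widetilde{\sigma}(s))\,\sigma(g)^{-1}: g \in G\}$ coincides with $\{h\,G(\widetilde{\sigma}(s))\,h^{-1}: h \in G\}$, which is exactly $G[\widetilde{\sigma}(s)]$.

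I do not expect a genuine obstacle here, and indeed the authors flag the conclusion as immediate: the conceptual work has already been absorbed into the proof that $\sigma$ carries each cusp stabilizer to a cusp stabilizer and into the well-definedness of the permutation $\widetilde{\sigma}$. The only matters left to verify are the compatibility of $\sigma$ with conjugation and its surjectivity, both of which follow at once from $\sigma \in \mathrm{Aut}(G)$. If one wished to phrase the argument in terms of the underlying points rather than of subgroups, one could invoke Lemma \ref{only} to recover that distinct conjugates of $G(\widetilde{\sigma}(s))$ correspond to distinct points of the orbit; but this refinement is not needed for the set equality asserted in the lemma.
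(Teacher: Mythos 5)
Your proof is correct and takes essentially the same route as the paper: the paper's entire proof is ``Follows from Corollary \ref{perm},'' i.e.\ from the fact that $\sigma(G(s))=G(\widetilde{\sigma}(s))$, and your explicit steps (term-by-term conjugation $\sigma(gG(s)g^{-1})=\sigma(g)\,\sigma(G(s))\,\sigma(g)^{-1}$ plus surjectivity of $\sigma$ on $G$) are precisely the implicit content of that one-line argument. You are also right that Lemma \ref{only} is not needed for the asserted set equality.
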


\begin{proof} 
Follows from Corollary \ref{perm}.
\end{proof}

\noindent  Now let $\{s_1,\cdots,s_n\} \subseteq \widehat{K}$ be a complete set of representatives for $\mathrm{Cusp}(G)$, where $n = |\mathrm{Cl}(A)|$.  It is clear from above then that 
$$\left\{G[s]: s \in \widehat{K}\right\}=\left\{ G[s_1], \cdots , G[s_n]\right\}\leftrightarrow\Cusp(G).$$ 

\begin{Thm}\label{permcusp}  
Let $\sigma \in \mathrm{Aut}(G)$. For each $i \in \{1, \cdots ,n \}$, let 
$$\sigma^*(G[s_i])=G[s_j],$$
where $s_j=g(\widetilde{\sigma}(s_i))$, for some $g \in G$. Then
$$\sigma^*:\Cusp(G) \rightarrow \Cusp(G),$$
is a well-defined permutation of $\Cusp(G)$.
\end{Thm}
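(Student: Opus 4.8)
The plan is to realize $\sigma^*$ as the map induced by $\sigma$ on the finite set $\{G[s_1],\dots,G[s_n]\}$, which has already been identified with $\Cusp(G)$, and then to deduce the permutation property from the invertibility of $\sigma$. All of the substantive geometry has been done in Theorem \ref{rational points} and Lemma \ref{cusps}; what remains is organizational.

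First I would isolate the two facts that do the real work. By Lemma \ref{cusps}, for every $s\in\widehat{K}$ we have $\sigma(G[s])=G[\widetilde{\sigma}(s)]$, so $\sigma$ carries each set of the form $G[s]$ to another set of the same form; equivalently, $\widetilde{\sigma}$ sends a complete $G$-orbit in $\widehat{K}$ into a single $G$-orbit. Combined with the displayed equivalence that $G[s_1]=G[s_2]$ if and only if $s_2=g(s_1)$ for some $g\in G$, this shows that the representative $s_j$ occurring in the statement is precisely the unique element of $\{s_1,\dots,s_n\}$ lying in the $G$-orbit of $\widetilde{\sigma}(s_i)$. Since $\widetilde{\sigma}$ is a genuine function (Corollary \ref{perm}) and the representatives are fixed once and for all, $s_j$ is uniquely determined by $s_i$. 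Hence $\sigma^*$ is well defined as a map $\Cusp(G)\to\Cusp(G)$, independent of any choices and not depending on which point of the orbit of $s_i$ one starts from.

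Next I would establish that $\sigma^*$ is a bijection by exhibiting its inverse. The key identity is $\widetilde{\sigma^{-1}}=(\widetilde{\sigma})^{-1}$, which follows at once from the definition of the tilde operation: $\widetilde{\sigma}(s)=s'$ means $\sigma(G(s))=G(s')$, and applying $\sigma^{-1}$ gives $\sigma^{-1}(G(s'))=G(s)$, i.e.\ $\widetilde{\sigma^{-1}}(s')=s$. Applying the construction of the previous paragraph to $\sigma^{-1}$ (again an automorphism of $G$) yields a well-defined map $(\sigma^{-1})^*$. I would then check that $(\sigma^{-1})^*\circ\sigma^*$ acts on a cusp $G[s_i]$ by first passing to the representative in the $G$-orbit of $\widetilde{\sigma}(s_i)$ and then to the representative in the $G$-orbit of $\widetilde{\sigma^{-1}}(\widetilde{\sigma}(s_i))=s_i$, which is $s_i$ itself; so the composite is the identity, and symmetrically for $\sigma^*\circ(\sigma^{-1})^*$. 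Therefore $\sigma^*$ is a permutation of $\Cusp(G)$ with inverse $(\sigma^{-1})^*$.

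I expect the only delicate point to be purely bookkeeping: verifying that the passage from the point-level permutation $\widetilde{\sigma}$ of $\widehat{K}$ to the orbit-level map $\sigma^*$ is compatible with the fixed choice of representatives, so that $\sigma^*$ genuinely lands in $\{G[s_1],\dots,G[s_n]\}$ and the composite computations above stay within the chosen representatives. There is no real obstacle once the identity $\widetilde{\sigma^{-1}}=(\widetilde{\sigma})^{-1}$ and the orbit-compatibility furnished by Lemma \ref{cusps} are in hand.
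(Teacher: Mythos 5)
Your proposal is correct and follows exactly the route the paper takes: its proof consists of the single line ``Follows from Corollary \ref{perm} and Lemma \ref{cusps}'', and your argument simply spells out how those two ingredients (together with the identity $\widetilde{\sigma^{-1}}=(\widetilde{\sigma})^{-1}$) yield well-definedness and bijectivity. No discrepancy to report.
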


\begin{proof} 
Follows from Corollary \ref{perm} and Lemma \ref{cusps}.
\end{proof}

\noindent \rm It is clear that inner automorphisms, for example, act trivially on $\Cusp(G)$.   
\\ \\
In Example \ref{Ex3cusps} we will see that in general the action of $Aut(G)$ on the cusps is not
transitive. 
\\ \\
\noindent We conclude this section with an interesting property which is common to all automorphisms of $G$. Let $H$ be a finite index subgroup of $G$.  Then $\Cusp(H)=H \backslash \widehat{K}$ is finite. We denote its order by $c(H)$.

\begin{Thm}\label{cuspnumber} 
Let $\sigma \in \mathrm{Aut}(G)$. Then, for all finite index subgroups $H$ of $G$,
$$ c(\sigma(H))=c(H).$$
\end{Thm}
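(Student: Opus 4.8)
The plan is to transport the orbit structure on $\widehat{K}$ using the permutation $\widetilde{\sigma}$ produced in Corollary \ref{perm}. Recall that $c(H)$ is by definition the number of $H$-orbits on $\widehat{K}$, and note first that $\sigma(H)$ is again a finite index subgroup of $G$, with $[G:\sigma(H)]=[G:H]$, so that $c(\sigma(H))$ is a well-defined finite number. I would then show that $\widetilde{\sigma}$ carries the $H$-orbits bijectively onto the $\sigma(H)$-orbits, which gives $c(\sigma(H))=c(H)$ at once.

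The heart of the matter is an equivariance property of $\widetilde{\sigma}$: for all $h \in G$ and $s \in \widehat{K}$,
$$\widetilde{\sigma}(h(s)) = \sigma(h)\bigl(\widetilde{\sigma}(s)\bigr).$$
To prove this I would compute the stabilizer of the left-hand point in two ways. On the one hand, the defining property of $\widetilde{\sigma}$ gives $G(\widetilde{\sigma}(h(s)))=\sigma(G(h(s)))$. On the other hand $G(h(s))=hG(s)h^{-1}$, so applying $\sigma$ and using $\sigma(G(s))=G(\widetilde{\sigma}(s))$ yields
$$\sigma(G(h(s)))=\sigma(h)\,G(\widetilde{\sigma}(s))\,\sigma(h)^{-1}=G\bigl(\sigma(h)(\widetilde{\sigma}(s))\bigr).$$
Hence $G(\widetilde{\sigma}(h(s)))=G(\sigma(h)(\widetilde{\sigma}(s)))$, and since a point of $\widehat{K}$ is uniquely determined by its stabilizer in $G$ (if $G(a)=G(b)$ then the nontrivial unipotent elements of this common group fix both $a$ and $b$, forcing $a=b$ by Lemma \ref{only}), the two points coincide.

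Granting the equivariance, the orbit claim is formal. For any $s$ we obtain
$$\widetilde{\sigma}(H\cdot s)=\{\sigma(h)(\widetilde{\sigma}(s)):h\in H\}=\sigma(H)\cdot\widetilde{\sigma}(s),$$
so $\widetilde{\sigma}$ sends each $H$-orbit onto a full $\sigma(H)$-orbit. Because $\widetilde{\sigma}$ is a bijection of $\widehat{K}$, the assignment $H\cdot s\mapsto\sigma(H)\cdot\widetilde{\sigma}(s)$ is well-defined and injective (if the images agree then $\widetilde{\sigma}(s')=\widetilde{\sigma}(h(s))$ for some $h\in H$, whence $s'\in H\cdot s$ by injectivity of $\widetilde{\sigma}$), and it is surjective since $\widetilde{\sigma}(s)$ ranges over all of $\widehat{K}$. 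Thus $\widetilde{\sigma}$ descends to a bijection $H\backslash\widehat{K}\to\sigma(H)\backslash\widehat{K}$, proving the theorem.

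The only step requiring genuine input is the equivariance identity, and within it the appeal to the fact that the stabilizer $G(s)$ pins down $s$ uniquely; this is precisely where the structural results of this section (Lemma \ref{only}) are indispensable, since they are what prevent the orbits from collapsing under $\widetilde{\sigma}$. Everything else is a routine manipulation of group actions.
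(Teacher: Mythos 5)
Your proof is correct, but it takes a genuinely different route from the paper's. The paper's argument is a counting argument: it invokes the well-known double coset formula $c(H)=\sum_{i=1}^n |H\backslash G/G(s_i)|$, observes that applying $\sigma$ gives a bijection $H\backslash G/G(s_i)\to\sigma(H)\backslash G/\sigma(G(s_i))$, and then uses Theorem \ref{permcusp} to note that $\sigma(G(s_1)),\ldots,\sigma(G(s_n))$ are the stabilizers of another complete set of representatives of $\Cusp(G)$, so the two sums agree. You bypass double cosets entirely: you prove the equivariance identity $\widetilde{\sigma}(h(s))=\sigma(h)\bigl(\widetilde{\sigma}(s)\bigr)$ --- which the paper never states explicitly, although it implicitly underlies Lemma \ref{cusps} and Theorem \ref{permcusp} --- and use it to show that $\widetilde{\sigma}$ descends to a bijection $H\backslash\widehat{K}\to\sigma(H)\backslash\widehat{K}$. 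Your justification that equal stabilizers force equal points (a nontrivial unipotent element of the common stabilizer fixes both points, so Lemma \ref{only} applies) is exactly the right input, and is the same mechanism the paper uses to get well-definedness in Corollary \ref{perm}. What your approach buys: it is self-contained, needing no ``well-known'' formula, and it proves slightly more than the statement, namely a canonical $\sigma$-equivariant bijection $\Cusp(H)\to\Cusp(\sigma(H))$ rather than a mere equality of cardinalities. What the paper's approach buys: brevity, since the double coset formula already packages the orbit count, leaving only the permutation of $\Cusp(G)$ to be checked.
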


\begin{proof} 
It is well-known that
\begin{displaymath} c(H)= \sum_{i=1}^n |H\backslash G / G(s_i)|, \end{displaymath}

\noindent where each term in the sum is the number of double cosets of $H$ and $G(s_i)$ in $G$.
Clearly $|H \backslash G / G(s_i)|=|\sigma(H) \backslash G/ \sigma(G(s_i))|$. The result follows from the proof of Theorem \ref{permcusp} since $\{\sigma(s_1), \cdots ,\sigma(s_n)\}$ is another complete set of representatives for $\Cusp(G)$.
\end{proof}

\begin{Exm} 
We recall \cite[Section 2]{MSquasi} that a \it non-trivial quasi-inner automorphism \rm of $G$ is a map
$$\iota_ g \rightarrow ghg^{-1}\;(h \in G),$$
where $g \in N_{\widehat{G}}(G) \backslash G.Z_K$ and $Z_K$ is the set of all scalar matrices in $\widehat{G}=GL_2(K)$. It is also known that
$$Quinn(G):=N_{\widehat{G}}(G) / G.Z_K \cong \Cl(A)_2,$$
the $2$-torsion in $\Cl(A)$, the ideal class group of $A$.
\noindent For this section the relevant result  \cite[Corollary 6.2]{MSquasi} is
$$Quinn(G) \;acts\; freely\;on\;\Cusp(G).$$ 
\end{Exm}

\section{General properties of automorphisms: elliptic points} \label{sect:elliptic}

\noindent \rm  Let $\omega \in E(G)$. Then, by definition, $\omega \in  \Omega$ and $G^{\omega}$, 
its stabilizer in $G$, is {\it non-trivial} i.e. it contains non-scalar matrices. It is known that then 
$\omega\in\widetilde{K}-K$ where $\widetilde{K}=\FF_{q^2}.K$ \cite[Lemma 2.2]{MSMZ} and 
$G^{\omega} \cong\FF_{q^2}^*$ \cite[Proposition 2.3]{MSMZ}.
Since by definition $\omega \notin K_{\infty}$, where $K_{\infty}$ is the completion of $K$ with 
respect to $\infty$, it follows that \cite[Corollary 2.4]{MSMZ} $E(G) \neq \emptyset$ if and only if 
$\delta$ is \it odd. 
\rm We will assume throughout this section, unless otherwise stated, that $\delta$ is odd. 
\par
Note that if $\omega$ is elliptic, then $\overline{\omega}$, the image of $\omega$ under 
$Gal(\widetilde{K}/K)$, is also elliptic with $G^{\overline{\omega}}=G^{\omega}$.
Clearly $G$ acts on $E(G)$.  
\\ \\
\noindent {\bf Definition.}  
For each $\omega \in E(G)$ let
$$ G\omega= \left\{g(\omega): g \in G \right\}.$$ 
The \it elliptic points \rm of $G$ are defined to be the elements of the set
$$\Ell(G)= G \backslash E(G)=\left\{ G\omega: \omega \in  E(G) \right\}.$$ 
\noindent It is well known that $\Ell(G)$ is finite.

\begin{Thm}\label{polynomial} 
$$|\Ell(G)|=L_K(-1),$$
\it where $L_K(u)$ is the $L$-polynomial of $K$. 
\end{Thm}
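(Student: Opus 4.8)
The plan is to translate the geometric count $|\Ell(G)|$ into an arithmetic count of ideal classes of the constant field extension $\widetilde{K}=\FF_{q^2}.K$, and then to evaluate that count by the class number formula. Write $\widetilde{A}=\FF_{q^2}.A$ for the integral closure of $A$ in $\widetilde{K}$; since $\widetilde{K}/K$ is a constant field extension it is everywhere unramified and $\widetilde{A}=A\otimes_{\FF_q}\FF_{q^2}$ is the maximal $A$-order in $\widetilde{K}$. The first step is to observe that every elliptic point already lives in the \emph{maximal} order: to $\omega\in E(G)$ one attaches the embedding $\phi_\omega\colon\widetilde{K}\hookrightarrow M_2(K)$ whose image is the set of matrices fixing $\omega$, and the order $\OOO_\omega=\{x\in\widetilde{K}:\phi_\omega(x)\in M_2(A)\}$ satisfies $\phi_\omega(\OOO_\omega^*)=G^{\omega}$. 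As $G^{\omega}\cong\FF_{q^2}^*$ (recalled at the start of this section), $\OOO_\omega^*$ contains a copy of $\FF_{q^2}^*$, forcing $\FF_{q^2}\subseteq\OOO_\omega$ and hence $\OOO_\omega=\widetilde{A}$. Thus $\phi_\omega$ is an \emph{optimal} embedding of the maximal order $\widetilde{A}$ into the maximal order $M_2(A)$.

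The second step is to make the bijection with ideal classes precise. I would first check that $\omega\mapsto\phi_\omega$ is a $G$-equivariant bijection from $E(G)$ onto the set $\mathcal{E}$ of optimal embeddings of $\widetilde{A}$: the point $\omega$ is recovered from $\phi$ as the eigenline on which $\phi(x)$ acts by $x$ rather than by its conjugate, so the two fixed points $\omega,\overline{\omega}$ of a given torus correspond to the two distinct embeddings $\phi_\omega,\phi_{\overline{\omega}}$ and no factor of two intervenes. Passing to orbits gives $\Ell(G)=G\backslash E(G)\cong G\backslash\mathcal{E}$. Next, to an optimal embedding $\phi$ I attach the standard lattice $A^2$ viewed as a module over $\widetilde{A}$ via $\phi$; optimality says this is a faithful rank-one $\widetilde{A}$-module, i.e.\ a fractional $\widetilde{A}$-ideal $\aa$, and two embeddings are $G$-conjugate exactly when the corresponding $\widetilde{A}$-modules are isomorphic. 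Hence $G\backslash\mathcal{E}$ injects into $\Cl(\widetilde{A})$, and the image consists of those classes $[\aa]$ for which $\aa$ is free of rank $2$ as an $A$-module. Because $\widetilde{K}/K$ is unramified, the Steinitz class of $\aa$ over $A$ is just $N_{\widetilde{K}/K}([\aa])$, so the image is exactly $\ker\big(N\colon\Cl(\widetilde{A})\to\Cl(A)\big)$ and $|\Ell(G)|=|\ker N|$.

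The third step is the class number computation. Writing $L_K(u)=\prod_{i=1}^{2g}(1-\alpha_i u)$, the constant field extension has $L_{\widetilde{K}}(u)=\prod_{i=1}^{2g}(1-\alpha_i^2 u)$, whence $L_{\widetilde{K}}(1)=\prod_i(1-\alpha_i)(1+\alpha_i)=L_K(1)\,L_K(-1)$; since $L_K(1)=h_K$ and $L_{\widetilde{K}}(1)=h_{\widetilde{K}}$ are the divisor class numbers of $K$ and $\widetilde{K}$, this is the relative class number identity $h_{\widetilde{K}}=h_K\,L_K(-1)$. To match $|\ker N|$ with this, I would prove that $N$ is surjective: because $|\alpha_i|=\sqrt{q}\neq 1$ we have $L_K(-1)=\prod_i(1+\alpha_i)\neq 0$, so $1+F$ (with $F$ the $q$-power Frobenius on the Jacobian) is an isogeny, hence surjective on geometric points, and a short chase shows that any preimage of an $\FF_q$-rational class is automatically $\FF_{q^2}$-rational. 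Surjectivity then gives $|\ker N|=h_{\widetilde{A}}/h_A$.

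The main obstacle I anticipate is the bookkeeping at the place $\infty$: passing between the class numbers $h_{\widetilde{A}},h_A$ of the Dedekind rings and the divisor class numbers $h_{\widetilde{K}},h_K$ of the fields. This uses the exact sequences relating $\Cl(A)$ to $\mathrm{Pic}^0(K)$ by killing the class of $\infty$, together with the fact that (as $\delta$ is odd) $\infty$ is \emph{inert} in $\widetilde{K}/K$, so there is a single place $\widetilde{\infty}$ above it; one must check that the resulting $\infty$-correction factors for $A$ and for $\widetilde{A}$ cancel in the ratio $h_{\widetilde{A}}/h_A$, leaving exactly $h_{\widetilde{K}}/h_K=L_K(-1)$. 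The case $\delta=1$ is transparent, since then $\Cl(A)=\mathrm{Pic}^0(K)$ and $\Cl(\widetilde{A})=\mathrm{Pic}^0(\widetilde{K})$ and no correction is needed; the general odd-$\delta$ case is where the care is required. Verifying the optimal-embedding correspondence of the second step and this final cancellation are the two technical points that carry the proof.
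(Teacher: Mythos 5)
Your proof is correct, but note that the paper does not actually prove Theorem \ref{polynomial} itself: its entire ``proof'' is the citation \cite[Corollary 3.6]{MSMZ}. What you have produced is a self-contained argument, and it runs along essentially the same lines as that cited reference: parametrize $\Ell(G)$ by optimal embeddings of $\widetilde{A}=\FF_{q^2}.A$ into $M_2(A)$, equivalently by the $\widetilde{A}$-ideal classes that are free of rank $2$ over $A$, identify these with $\ker\bigl(N\colon \Cl(\widetilde{A})\to\Cl(A)\bigr)$, and finish with the constant-field class number relation $h_{\widetilde{K}}=h_K\,L_K(-1)$ plus surjectivity of the norm. I checked the individual steps and they hold; in particular your ``short chase'' really is short: applying $F$ to $y+Fy=x$ with $Fx=x$ gives $Fy+F^2y=x=y+Fy$, hence $F^2y=y$, so every geometric preimage under $1+F$ of an $\FF_q$-rational class is automatically $\FF_{q^2}$-rational. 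Two spots deserve a cleaner justification in a final write-up. First, ``unramified'' alone does not yield the Steinitz formula you invoke; unramifiedness only forces the square of the Steinitz class of $\widetilde{A}$ over $A$ to be trivial. The right reason is that a constant field extension gives $\widetilde{A}=A\otimes_{\FF_q}\FF_{q^2}$, which is visibly free as an $A$-module, whence $\mathrm{st}_A(\aa)=[N_{\widetilde{K}/K}(\aa)]$ for every fractional $\widetilde{A}$-ideal $\aa$, and ``$A$-free'' becomes ``norm class trivial''. Second, for the $\infty$-bookkeeping you should say explicitly why the norm is surjective on $\Cl(\widetilde{A})\to\Cl(A)$ and not merely on $\mathrm{Pic}^0$: since $\delta$ is odd, $\infty$ is inert and $\widetilde{\infty}$ again has degree $\delta$ over $\FF_{q^2}$, while the norm doubles degrees; so on the quotients $\ZZ/\delta\ZZ$ of the two exact sequences the norm induces multiplication by $2$, which is invertible precisely because $\delta$ is odd. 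The snake lemma then gives simultaneously the surjectivity and the cancellation $h_{\widetilde{A}}/h_A=h_{\widetilde{K}}/h_K$ that you anticipated.
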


\begin{proof} 
See, for example, \cite[Corollary 3.6]{MSMZ}. 
\end{proof}

\begin{Rk} 
We note that $GL_2(\FF_q) \leq G$, for \it any \rm $\delta$. Let $\epsilon$ be any generator of $\FF_{q^2}^*$. We put $\lambda=\epsilon\overline{\epsilon}$ and $\mu=\epsilon+\overline{\epsilon}$.
\noindent Consider the following elements of $GL_2(\FF_q)$.
$$g=\left[\begin{array}{cc}0&\lambda\\-1&\mu\end{array}\right]\;\;\mathrm{and}\;\; g'=\left[\begin{array}{cc}0&\lambda\\1&0
\end{array}\right].$$\noindent Then $G^{\epsilon}=G^{\overline{\epsilon}}=<g>$ and $g'(\epsilon)=\overline{\epsilon}$. Then $\epsilon,\overline{\epsilon}$ are elliptic elements of $G$ when (and only when) $\delta$ is odd. In general then (when $\delta$ is odd) $|\Ell(G)| \geq 1$. This bound is best possible. Consider for example the case where $g(K)=0$ and $\delta$ is odd. It is known \cite[Theorem 5.1.15, p.193]{Stich} then that $L_K(u)=1$.  
\end{Rk}

\noindent As with $\Cusp(G)$ we determine an action of $\mathrm{Aut}(G)$ on the elliptic points via its action on their stabilizers. We record some basic properties of these subgroups.

\begin{Lem}\label{elist} 
Let $\omega,\;\omega_1,\;\omega_2 \in E(G)$ and $g \in G$. The following are obvious.
\begin{itemize}
\item[(i)] $G^{\omega}=G^{\overline{\omega}}$.
\item[(ii)] $gG^{\omega}g^{-1}=G^{g(\omega)}$.
\item[(iii)] $G^{\omega_1}=G^{\omega_2}$ if and only if 
$\left\{\omega_1,\overline{\omega_1}\right\}=\left\{\omega_2,\overline{\omega_2}\right\}$.
\end{itemize}
\end{Lem}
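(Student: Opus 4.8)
The statement to prove is Lemma \ref{elist}, which the authors themselves label as ``obvious.'' So my plan is to verify each of the three parts by direct computation, since these are all elementary facts about stabilizers of elliptic points under the Galois action and the $G$-action.

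\textbf{Part (i).} The plan is to recall that $G^\omega$ consists of matrices $M \in GL_2(A)$ whose M\"obius action fixes $\omega$. The key observation is that the coefficient entries of any such $M$ lie in $A \subseteq K$, and the equation defining ``$M$ fixes $\omega$'' is a polynomial equation with coefficients in $K$ in the variable $\omega$. Applying the nontrivial element of $Gal(\widetilde{K}/K)$ to the fixed-point equation $M(\omega)=\omega$, and using that this Galois automorphism fixes the entries of $M$ (which lie in $K$) while sending $\omega \mapsto \overline{\omega}$, I obtain $M(\overline{\omega})=\overline{\omega}$. Hence $M \in G^\omega$ if and only if $M \in G^{\overline{\omega}}$, giving $G^\omega = G^{\overline{\omega}}$.

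\textbf{Part (ii).} This is the standard conjugation formula for stabilizers under a group action, specialized to the M\"obius action. For $h \in G$ and $M \in G$, I would note that $M$ fixes $g(\omega)$ exactly when $g^{-1}Mg$ fixes $\omega$, i.e.\ when $g^{-1}Mg \in G^\omega$, i.e.\ when $M \in gG^\omega g^{-1}$. This immediately yields $gG^\omega g^{-1} = G^{g(\omega)}$.

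\textbf{Part (iii).} For the ``if'' direction, I would combine (i) and (ii): if $\{\omega_1,\overline{\omega_1}\} = \{\omega_2,\overline{\omega_2}\}$, then either $\omega_2=\omega_1$ (so the stabilizers are trivially equal) or $\omega_2 = \overline{\omega_1}$, in which case $G^{\omega_2}=G^{\overline{\omega_1}}=G^{\omega_1}$ by (i). For the ``only if'' direction, I would use the fact recalled in the text that $G^\omega \cong \FF_{q^2}^*$ is a finite cyclic group acting on $\widehat{K}$, and that by \cite[Lemma 2.2]{MSMZ} an elliptic element $\omega$ lies in $\widetilde{K}-K$. A generator of $G^\omega$ is an elliptic matrix, and its only fixed points on $\PP_1(\overline{K})$ are the two roots of its characteristic polynomial, which are precisely $\omega$ and $\overline{\omega}$ (a conjugate pair over $K$). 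Thus the \emph{common} fixed-point set of $G^{\omega_1}$ is exactly $\{\omega_1,\overline{\omega_1}\}$; if $G^{\omega_1}=G^{\omega_2}$ then these two groups have the same fixed-point set, forcing $\{\omega_1,\overline{\omega_1}\}=\{\omega_2,\overline{\omega_2}\}$.

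The main (and only real) subtlety is in the ``only if'' direction of Part (iii): one must know that the fixed-point set of the whole cyclic group $G^\omega$ on $\PP_1$ is exactly the two points $\{\omega,\overline{\omega}\}$ and not larger. This follows because a nontrivial elliptic (diagonalizable with distinct eigenvalues, eigenvalues not in $\FF_q$) matrix has exactly two fixed points, and every element of $G^\omega$ shares the same pair of eigenvectors, so the group has no extra common fixed points. Everything else is immediate from the definitions, which is why the authors call these facts obvious.
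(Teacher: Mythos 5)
Your proof is correct; the paper itself offers no argument here (the lemma is stated as ``obvious''), and your verification --- Galois equivariance of the fixed-point equation for (i), the standard stabilizer-conjugation identity for (ii), and the two-fixed-point argument for (iii) --- is exactly the reasoning the authors leave implicit. One small slip: for $M=\left[\begin{smallmatrix} a & b\\ c & d\end{smallmatrix}\right]$ the fixed points are the roots of the fixed-point quadratic $c\omega^2+(d-a)\omega-b=0$, not of the characteristic polynomial of $M$; but your subsequent eigenvector argument (all elements of the cyclic group $G^{\omega}$ share the same two eigendirections, and the irreducibility of the quadratic over $K$ makes these points the conjugate pair $\{\omega,\overline{\omega}\}$) is the correct one, so nothing breaks.
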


\begin{Def} Let
$$\mathcal{C}=\left\{C\leq G: C,\;\mathrm{ cyclic \;order} \;q^2-1\right\}.$$
and
$$\overline{E}(G)=\{ \{\omega,\overline{\omega}\}: \omega \in E(G)\}.$$
(The conjugate pairs are unordered.) 
\end{Def}
 
\noindent The starting point for our study of the action of $\mathrm{Aut}(G)$ on $\Ell(G)$ is its 
natural action on $\mathcal{C}$. 

\begin{Lem}\label{bijcyclic} \cite[Lemma 2.6]{MSMZ} 
If $\delta$ is odd, mapping $\{\omega,\overline{\omega}\}$ to $G^{\omega} =G^{\overline{\omega}}$ 
induces a natural bijection 
$$\overline{E}(G)\longleftrightarrow \mathcal{C}.$$
The inverse map is given by mapping the cyclic subgroup to its two fixed points. 
\end{Lem}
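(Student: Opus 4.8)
The plan is to dispatch well-definedness and injectivity immediately from the lemmas already in place, and to concentrate the real work on surjectivity. For well-definedness, the assignment $\{\omega,\overline{\omega}\}\mapsto G^{\omega}$ is independent of the chosen representative of the pair because $G^{\omega}=G^{\overline{\omega}}$ by Lemma \ref{elist}(i), and its image lies in $\mathcal{C}$ since $G^{\omega}\cong\FF_{q^2}^{*}$ is cyclic of order $q^2-1$. Injectivity is then immediate from Lemma \ref{elist}(iii), which asserts that $G^{\omega_1}=G^{\omega_2}$ exactly when $\{\omega_1,\overline{\omega_1}\}=\{\omega_2,\overline{\omega_2}\}$.

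For surjectivity I would begin with an arbitrary $C\in\mathcal{C}$, fix a generator $c$, and analyze its eigenvalues. Since $\det(c)\in A^{*}=\FF_q^{*}$ and $\mathrm{tr}(c)\in A$, while the two eigenvalues of $c$ are $(q^2-1)$-th roots of unity (the order of $c$ is prime to $p$, so $c$ is semisimple), their product and sum both lie in the field of constants $\FF_q$ of $K$. Hence the eigenvalues form a Galois-conjugate pair $\lambda,\lambda^{q}\in\FF_{q^2}^{*}$; and because the order of $c$ equals $q^2-1$, the element $\lambda$ must generate $\FF_{q^2}^{*}$ (if both eigenvalues lay in $\FF_q^{*}$ the order of $c$ would divide $q-1$). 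In particular the two eigenvalues are distinct and $\lambda\notin\FF_q$, so the M\"obius action of $c$ has exactly two fixed points $\omega,\overline{\omega}\in\PP_1(\widetilde{K})$, interchanged by $Gal(\widetilde{K}/K)$.

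The heart of the argument is to check that $\omega\in E(G)$, i.e. that $\omega\in\Omega$. First, $\omega$ is not a point of $\widehat{K}=\PP_1(K)$: otherwise $c$ would fix a point of $\widehat{K}$ and hence lie in some stabilizer $G(s)$, whose elements have eigenvalues in $\FF_q^{*}$ by the triangular normal form for cusp stabilizers (Lemma \ref{list} and the proof of Theorem \ref{rational points}), contradicting $\lambda\notin\FF_q$. Thus $\omega\in\widetilde{K}-K$. Next, $\omega\notin K_{\infty}$: this is precisely where oddness of $\delta$ is consumed, since then $\infty$ stays inert in the constant-field extension $\widetilde{K}=\FF_{q^2}K$, whence $\widetilde{K}\cap K_{\infty}=K$ and an element of $\widetilde{K}-K$ cannot lie in $K_{\infty}$. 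Therefore $\omega\in\Omega$, so $\omega\in E(G)$ and $C=\langle c\rangle\leq G^{\omega}$; comparing orders, $|C|=q^2-1=|G^{\omega}|$ forces $C=G^{\omega}$, so $C$ is in the image of the map.

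Finally, the displayed inverse map is justified by the same eigenvalue picture: the two fixed points of $C=G^{\omega}$ are exactly $\omega$ and $\overline{\omega}$, the latter attached to the eigenvalue $\lambda^{q}$, which is the $Gal(\widetilde{K}/K)$-image of the direction defining $\omega$. I expect the main obstacle to be surjectivity, and within it the passage from ``$c$ is an abstract cyclic generator'' to ``its fixed points avoid both $K$ and $K_{\infty}$''; the first exclusion rests on the eigenvalue structure of cusp stabilizers, and the second is the step that genuinely requires $\delta$ to be odd.
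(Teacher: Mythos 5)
Your proof is correct, but note a structural point first: the paper does not actually prove this lemma at all --- it is quoted verbatim from \cite[Lemma 2.6]{MSMZ} --- so there is no internal argument to compare yours against; what you have produced is a self-contained reconstruction, and it holds together. Well-definedness and injectivity are indeed immediate from Lemma \ref{elist}(i),(iii) together with the fact $G^{\omega}\cong\FF_{q^2}^*$ quoted in this section from \cite[Proposition 2.3]{MSMZ}. Your surjectivity argument is sound and locates the key points correctly: the order $q^2-1$ is prime to $p$, so a generator $c$ is semisimple; $\mathrm{tr}(c)\in A$ is a sum of two elements of $\FF_{q^2}$, hence lies in the constant field $\FF_q$, and with $\det(c)\in\FF_q^*$ this forces a Galois-conjugate eigenvalue pair $\lambda,\lambda^q$ with $\lambda$ of order $q^2-1$; the exclusion $\omega\notin\PP_1(K)$ via the triangular normal form of cusp stabilizers (whose elements have eigenvalues in $\FF_q^*$) is valid; and you correctly identify the one place where oddness of $\delta$ is consumed, namely that $\infty$ is inert in $\widetilde{K}/K$, so $\widetilde{K}\cap K_\infty=K$ and the fixed points land in $\Omega$. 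Two small points of hygiene: to conclude $\omega\in E(G)$ you should say explicitly that $c$ is non-scalar (immediate, since its eigenvalues are distinct), because ellipticity requires the stabilizer to contain a non-scalar matrix; and your final step $C=G^{\omega}$, via the order comparison $|C|=q^2-1=|G^{\omega}|$, leans on the cited isomorphism $G^{\omega}\cong\FF_{q^2}^*$ --- legitimate, since the paper states it before the lemma, but it means your argument, like the paper's citation, is ultimately anchored in results of \cite{MSMZ} rather than being fully from first principles.
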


\begin{Thm}\label{acts3} 
$\mathrm{Aut}(G)$ acts on $\overline{E}(G)$.
\end{Thm}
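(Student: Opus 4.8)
The plan is to let $\mathrm{Aut}(G)$ act in the obvious way on the subgroups of $G$, check that this action preserves the class $\mathcal{C}$, and then transport the action to $\overline{E}(G)$ through the bijection of Lemma \ref{bijcyclic}. First I would record that $\mathrm{Aut}(G)$ acts (on the left) on the set of all subgroups of $G$ via $\sigma \cdot H = \sigma(H)$: this is a genuine group action, since $(\sigma\tau)(H)=\sigma(\tau(H))$ and the identity automorphism fixes every $H$.

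The only thing that really needs verifying is that $\mathcal{C}$ is stable under this action. Given $C \in \mathcal{C}$, the restriction $\sigma|_C \colon C \to \sigma(C)$ is a group isomorphism, so $\sigma(C)$ is again cyclic of order $q^2-1$ and hence $\sigma(C) \in \mathcal{C}$. Running the same argument for $\sigma^{-1}$ shows that $\sigma$ permutes $\mathcal{C}$, so we obtain an action of $\mathrm{Aut}(G)$ on $\mathcal{C}$. Finally I would transport this along the bijection $\beta \colon \overline{E}(G) \to \mathcal{C}$ of Lemma \ref{bijcyclic}, $\beta(\{\omega,\overline{\omega}\})=G^{\omega}$, by setting $\sigma \cdot \{\omega,\overline{\omega}\} := \beta^{-1}\bigl(\sigma(G^{\omega})\bigr)$. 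Since conjugating a group action by a bijection again yields a group action, the action axioms on $\overline{E}(G)$ hold automatically.

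The honest remark is that there is essentially no hard work left in this statement: all the substance sits in the earlier results. The key inputs are that each stabilizer $G^{\omega} \cong \FF_{q^2}^*$ is cyclic of order $q^2-1$, and, crucially, that Lemma \ref{bijcyclic} makes $\beta$ a bijection \emph{onto} $\mathcal{C}$, so that every cyclic subgroup of order $q^2-1$ really is an elliptic stabilizer (this is where the hypothesis that $\delta$ is odd, and hence $E(G)\neq\emptyset$, enters). Once these are in hand the theorem is a formal consequence. The one point to be careful about is confirming that $\sigma(C)$ lands back in $\mathcal{C}$, rather than inside some strictly larger finite subgroup of $G$ of the type $GL_2(\FF_q)$; this is exactly what the order-$q^2-1$ and cyclicity conditions, being isomorphism invariants, rule out.
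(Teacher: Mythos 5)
Your proposal is correct and follows essentially the same route as the paper: the paper's proof also sends $\{\omega,\overline{\omega}\}$ to the pair corresponding to $\sigma(G^{\omega})$ under the bijection of Lemma \ref{bijcyclic}, relying implicitly on exactly the point you make explicit, namely that $\sigma(G^{\omega})$ is again cyclic of order $q^2-1$ and that the bijection is onto $\mathcal{C}$, so it must equal $G^{\omega_0}$ for a unique pair $\{\omega_0,\overline{\omega_0}\}$. Your write-up merely spells out the action axioms and the stability of $\mathcal{C}$, which the paper leaves tacit.
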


\begin{proof} 
Follows from Lemma \ref{bijcyclic}. Let $\{\omega,\overline{\omega}\} \in \overline{E}(G)$ and let $\sigma \in \mathrm{Aut}(G)$. Then $\sigma(G^{\omega})=G^{\omega_0}$, for some unique $\{\omega_0, \overline{\omega_0}\} \in \overline{E}(G)$. We define $\overline{\sigma}: \overline{E}(G) \rightarrow 
\overline{E}(G)$ by
$$\overline{\sigma}(\{\omega,\overline{\omega}\})=(\{\omega_0, \overline{\omega_0}\}).$$
\end{proof}

\noindent
To refine this we need \cite[Lemma 4.1]{MSMZ}, which says that under the building map $\lambda$ from
$\Omega$ to the Bruhat-Tits tree $\TTT$ any elliptic element $\omega$ maps to a vertex $v=\lambda(\omega)$
of $\TTT$ and $G^{\omega}\leq G_v$. Moreover, $\omega$ and $\overline{\omega}$ map to the same vertex.
\par
More precisely we have

\begin{Prop}\label{poss}  \cite[Proposition 4.4]{MSMZ}
Let $\omega\in\Omega$ be an elliptic element, and let $v=\lambda(\omega)$ be the vertex to which it 
maps under the building map. There are two possibilities.
\begin{itemize} 
\item[(a)] If $G\omega=G \overline{\omega}$, then 
$$\FF_{q^2}^* \cong G^{\omega} \lneqq G_v \cong GL_2(\FF_q).$$
\item[(b)]  If $G\omega \neq G\overline{\omega}$, then
$$G^{\omega}= G_v \cong \FF_{q^2}^*.$$
\end{itemize}
\end{Prop}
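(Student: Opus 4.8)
The plan is to localize the global condition ``$G\omega = G\overline{\omega}$'' to the finite vertex stabilizer $G_v$, and then read off the structure of $G_v$ from the classification of vertex stabilizers already used in the proof of Theorem \ref{rational points}. The enabling observation is that the building map $\lambda$ is $G$-equivariant and, by the cited \cite[Lemma 4.1]{MSMZ}, sends both $\omega$ and $\overline{\omega}$ to the same vertex $v$, with $G^{\omega}\leq G_v$. First I would establish the localization: $G\omega = G\overline{\omega}$ if and only if there is some $h\in G_v$ with $h(\omega)=\overline{\omega}$. The nontrivial direction is immediate from equivariance, since if $h\in G$ satisfies $h(\omega)=\overline{\omega}$ then $h(v)=h(\lambda(\omega))=\lambda(h(\omega))=\lambda(\overline{\omega})=v$, so $h$ fixes $v$ and hence lies in $G_v$. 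This reduces the whole question to the finite group $G_v$.

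Next I would pin down $G_v$. Since $G^{\omega}\cong\FF_{q^2}^*$ is generated by an element of order $q^2-1$, whose eigenvalues are primitive $(q^2-1)$-th roots of unity and hence lie in $\FF_{q^2}\setminus\FF_q$, the stabilizer $G_v\supseteq G^{\omega}$ contains a matrix with eigenvalues not in $\FF_q$; by \cite[Corollary 2.12]{MSstabilizer} this forces $G_v\cong\FF_{q^2}^*$ or $G_v\cong GL_2(\FF_q)$. In $GL_2(\FF_q)$ a cyclic subgroup of order $q^2-1$ is necessarily a non-split torus, so in either case $G^{\omega}$ sits inside $G_v$ as such a torus, the two cases being distinguished by whether $G^{\omega}=G_v$ (order $q^2-1$) or $G^{\omega}\lneqq G_v$ (order $(q^2-1)(q^2-q)$).

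I would then match the two structural cases to the dichotomy in the statement. If $G_v\cong\FF_{q^2}^*$, comparing orders gives $G_v=G^{\omega}$, so every $h\in G_v$ fixes $\omega$; as $\omega\neq\overline{\omega}$ (because $\omega\notin K$), no element of $G_v$ carries $\omega$ to $\overline{\omega}$, and the localization yields $G\omega\neq G\overline{\omega}$, which is case (b). If instead $G_v\cong GL_2(\FF_q)$, then $G^{\omega}$ is a non-split torus $T$ with $[N_{G_v}(T):T]=2$, so I may pick $w\in N_{G_v}(T)\setminus T$. This $w$ normalizes $T$ and therefore permutes its two fixed points $\{\omega,\overline{\omega}\}$; were it to fix each of them it would share both eigenlines with $T$ and hence centralize $T$, placing $w$ in $C_{G_v}(T)=T$, a contradiction. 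Thus $w(\omega)=\overline{\omega}$ with $w\in G_v$, and the localization yields $G\omega=G\overline{\omega}$, which is case (a).

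Since the two possibilities for $G_v$ are exhaustive and mutually exclusive and correspond exactly to the two conditions on $\omega$, the proposition follows. The main obstacle is the final Weyl-element step in the $GL_2(\FF_q)$ case: one must verify that the nontrivial coset of the torus normalizer genuinely \emph{swaps} the two fixed points rather than fixing them, which is precisely where the self-centralizing property $C_{G_v}(T)=T$ of the non-split torus enters. The analogous concrete computation already appears for the standard torus in the Remark preceding Lemma \ref{elist}, via the element $g'$ with $g'(\epsilon)=\overline{\epsilon}$.
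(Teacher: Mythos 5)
Your argument is correct, but note that this paper never proves Proposition \ref{poss} at all: it is imported wholesale from \cite[Proposition 4.4]{MSMZ}, so there is no in-paper proof to measure yours against. What you have written is a sound, self-contained reconstruction, and it runs on exactly the tools this paper deploys elsewhere: the equivariance of the building map together with the fact (quoted from \cite[Lemma 4.1]{MSMZ}) that $\omega$ and $\overline{\omega}$ land on the same vertex, which justifies your localization of $G\omega=G\overline{\omega}$ to the finite group $G_v$; the eigenvalue dichotomy of \cite[Corollary 2.12]{MSstabilizer}, which the paper itself invokes in the proof of Theorem \ref{rational points}; and the description of $G^{\omega}$ as the cyclic group of order $q^2-1$ whose two fixed points are $\omega$ and $\overline{\omega}$ (Lemma \ref{bijcyclic}). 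Both case analyses are watertight: when $G_v\cong\FF_{q^2}^*$ the order count forces $G_v=G^{\omega}$, and $\omega\neq\overline{\omega}$ (since $\omega\in\widetilde{K}\setminus K$) then rules out any $h\in G_v$ carrying $\omega$ to $\overline{\omega}$; when $G_v\cong GL_2(\FF_q)$ your Weyl-element step is the right one, and the facts it needs --- a cyclic subgroup of order $q^2-1$ in $GL_2(\FF_q)$ is a non-split torus, its normalizer properly contains it, and it is self-centralizing --- are standard and used correctly (self-centralizing is precisely what converts ``fixes both eigenlines'' into membership in $T$, yielding the contradiction). A small alternative for that last step: since all cyclic subgroups of order $q^2-1$ in $G_v\cong GL_2(\FF_q)$ are conjugate, one could instead transport the explicit pair $g,g'$ from the Remark preceding Lemma \ref{elist}, where $g'(\epsilon)=\overline{\epsilon}$ is verified by direct computation; your abstract normalizer argument buys generality and avoids choosing coordinates, while the citation route the paper takes simply defers the whole matter to \cite{MSMZ}.
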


This has been strengthened in \cite[Lemma 5.8]{MSquasi}, which says that $G_v\cong\FF_{q^2}^*$ if and only
if $G^{\omega}$ is maximally finite. Consequently we define
 
\begin{Def} Let
$$\mathcal{C}_{mf} =\left\{C \in \mathcal{C}: C \hbox{\rm \ is maximally finite}\right\}$$
and $\mathcal{C}_{nm}=\mathcal{C}-\mathcal{C}_{mf}$.
\end{Def}

Clearly, the action of $\mathrm{Aut}(G)$ respects these group-theoretic properties.
 
\begin{Thm}\label{acts1} 
$\mathrm{Aut}(G)$ acts on both $\mathcal{C}_{mf}$ and $\mathcal{C}_{nm}$. 
\end{Thm}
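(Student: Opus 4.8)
The plan is to show that any $\sigma\in\mathrm{Aut}(G)$ carries the class $\mathcal{C}_{mf}$ into itself and the class $\mathcal{C}_{nm}$ into itself, which is what it means for $\mathrm{Aut}(G)$ to act on each of the two pieces separately. By Theorem \ref{acts3} we already know that $\mathrm{Aut}(G)$ acts on $\overline{E}(G)$, and via the bijection of Lemma \ref{bijcyclic} this is the same as an action on $\mathcal{C}$: for each $C\in\mathcal{C}$ the image $\sigma(C)$ is again an element of $\mathcal{C}$. So the only thing that remains is to check that this action does not mix the two subsets, i.e. that $\sigma(C)\in\mathcal{C}_{mf}$ whenever $C\in\mathcal{C}_{mf}$, and likewise for $\mathcal{C}_{nm}$.

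The key observation is that the defining property of $\mathcal{C}_{mf}$, namely \emph{maximal finiteness} in $G$, is purely group-theoretic. A subgroup $C\le G$ is maximally finite exactly when $C$ is finite and is not properly contained in any larger finite subgroup of $G$. First I would note that any automorphism $\sigma$ sends finite subgroups to finite subgroups (it is a bijection preserving orders) and preserves the containment lattice of subgroups: $C_1\le C_2$ if and only if $\sigma(C_1)\le\sigma(C_2)$. Hence $C$ is maximally finite if and only if $\sigma(C)$ is maximally finite. This is the heart of the argument, and it is essentially immediate once one has recorded that maximal finiteness is a lattice-theoretic/order-theoretic condition on the poset of finite subgroups, a structure that every automorphism respects.

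Concretely, I would argue as follows. Let $C\in\mathcal{C}_{mf}$, so $C$ is cyclic of order $q^2-1$ and maximally finite. Since $C\in\mathcal{C}$, Theorem \ref{acts3} (through Lemma \ref{bijcyclic}) guarantees $\sigma(C)\in\mathcal{C}$, so $\sigma(C)$ is again cyclic of order $q^2-1$; it remains to see it is maximally finite. Suppose, for contradiction, that $\sigma(C)\lneqq F$ for some finite subgroup $F\le G$. Applying $\sigma^{-1}$, which is also an automorphism, gives $C=\sigma^{-1}(\sigma(C))\lneqq\sigma^{-1}(F)$, where $\sigma^{-1}(F)$ is finite (again because $\sigma^{-1}$ preserves orders and hence finiteness). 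This contradicts the maximal finiteness of $C$. Therefore $\sigma(C)$ is maximally finite, i.e. $\sigma(C)\in\mathcal{C}_{mf}$. The same reasoning applied to $\sigma^{-1}$ shows $\sigma^{-1}(\mathcal{C}_{mf})\subseteq\mathcal{C}_{mf}$, so $\sigma$ restricts to a bijection of $\mathcal{C}_{mf}$; consequently it also restricts to a bijection of the complement $\mathcal{C}_{nm}=\mathcal{C}-\mathcal{C}_{mf}$.

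I do not expect any serious obstacle here: the substance was already done in establishing Theorem \ref{acts3}, and the only genuinely new content is the triviality that maximal finiteness is invariant under automorphisms. The one point worth being careful about is to invoke $\sigma^{-1}$ rather than $\sigma$ when ruling out a larger finite overgroup, so that the contradiction lands back on the hypothesis about $C$ itself; this is why the argument must be run with both $\sigma$ and $\sigma^{-1}$ to conclude that the restriction is a genuine permutation of each subset rather than merely an injection. I would therefore keep the proof short, citing Theorem \ref{acts3} for the action on $\mathcal{C}$ and adding only the one-line remark that automorphisms preserve maximal finiteness.
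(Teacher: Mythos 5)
Your proposal is correct and is essentially the paper's own argument: the paper merely remarks before the theorem that ``Clearly, the action of $\mathrm{Aut}(G)$ respects these group-theoretic properties,'' relying on the natural action on $\mathcal{C}$ (Lemma \ref{bijcyclic}/Theorem \ref{acts3}) plus the invariance of maximal finiteness under automorphisms. You have simply spelled out that one-line justification, including the correct use of $\sigma^{-1}$ to rule out a finite overgroup and to get a genuine permutation of each subset.
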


Now we consider the induced actions on the $G$-orbits. 

\begin{Def} 
(a) For each subgroup $S$ of a group $T$ we put
$$S^T = \left\{S^t=tSt^{-1}: t\in T\right\}.$$ 
(b) Let $$(\mathcal{C}_{nm})^G =\left\{C^G : C \in \mathcal{C}_{nm} \right\}\;\mathrm{and}\;\;(\mathcal{C}_{mf})^G=\left\{C^G: C \in \mathcal{C}_{mf}\right\}.$$
\end{Def}

\begin{Cor}\label{acts2}  
$\mathrm{Aut}(G)$ acts on both $(\mathcal{C}_{nm})^G$ and $(\mathcal{C}_{mf})^G$.
Actually, the action factors through the action of $\mathrm{Aut}(G)/\mathrm{Inn}(G)$.
\end{Cor}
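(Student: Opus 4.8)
The plan is to lift the action of $\mathrm{Aut}(G)$ on $\mathcal{C}_{mf}$ and $\mathcal{C}_{nm}$ furnished by Theorem \ref{acts1} to the level of $G$-conjugacy classes, and then to check that conjugation by elements of $G$ lies in the kernel. For $\sigma \in \mathrm{Aut}(G)$ and $C \in \mathcal{C}$ I would define the candidate action by
$$\sigma \cdot C^G = (\sigma(C))^G,$$
so that the whole argument reduces to showing that this assignment is well-defined, that it respects composition, and that it is trivial on $\mathrm{Inn}(G)$.

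The key point is a one-line computation. For any $g \in G$ we have
$$\sigma(gCg^{-1}) = \sigma(g)\,\sigma(C)\,\sigma(g)^{-1},$$
and since $\sigma$ is a bijection of $G$ onto itself, as $g$ runs over $G$ so does $\sigma(g)$. Hence $\sigma$ carries the entire $G$-conjugacy class $C^G$ bijectively onto $(\sigma(C))^G$; in particular $(\sigma(C))^G$ depends only on the class $C^G$ and not on the chosen representative $C$, so the map above is well-defined. The homomorphism identity $(\sigma\tau)\cdot C^G = \sigma\cdot(\tau\cdot C^G)$ is then immediate from $(\sigma\tau)(C) = \sigma(\tau(C))$. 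Because Theorem \ref{acts1} guarantees that $\sigma$ preserves each of the two sets $\mathcal{C}_{mf}$ and $\mathcal{C}_{nm}$ (maximal finiteness being a group-theoretic property, invariant under $\sigma$), the induced maps preserve $(\mathcal{C}_{mf})^G$ and $(\mathcal{C}_{nm})^G$ separately, yielding the two asserted actions.

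For the final assertion, I would take $\sigma$ inner, say $\sigma(x) = hxh^{-1}$ for a fixed $h \in G$. Then for every $C \in \mathcal{C}$ we have $\sigma(C) = hCh^{-1} \in C^G$, so $(\sigma(C))^G = C^G$ and $\sigma$ acts as the identity on both $(\mathcal{C}_{mf})^G$ and $(\mathcal{C}_{nm})^G$. Thus $\mathrm{Inn}(G)$ is contained in the kernel of each action, and both actions factor through $\mathrm{Aut}(G)/\mathrm{Inn}(G)$, as claimed. There is no genuine obstacle here: the substance of the corollary is already carried by Theorem \ref{acts1}, and the only point demanding care is the well-definedness, which is precisely the observation that $\sigma$ permutes $G$ and therefore sends $G$-conjugacy classes to $G$-conjugacy classes.
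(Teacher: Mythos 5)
Your proof is correct and follows exactly the route the paper intends: Corollary \ref{acts2} is stated there as an immediate consequence of Theorem \ref{acts1}, with the induced action on $G$-conjugacy classes, its well-definedness, and the triviality of inner automorphisms all left implicit. You have simply written out those routine verifications, so there is nothing to add.
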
 

\begin{Def} 
We partition $\Ell(G)$ as follows:
\\ \\
(i) $\Ell(G)^{=}=\left\{G\omega: \omega \in E(G),G\omega=G\overline{\omega}\right\}$ \\ \\
(ii) $\Ell(G)^{\neq}=\left\{G\omega: \omega \in E(G),G\omega\neq G\overline{\omega}\right\}$ \\ \\
(iii) $ \overline{\Ell}(G)^{\neq}=\left\{\{G\omega,G\overline{\omega}\}: \omega \in E(G),G\omega\neq G\overline{\omega}\right\}$.
\end{Def}

\noindent It is clear from the remarks after Theorem \ref{polynomial} that $\Ell(G)^{=} \neq \emptyset$. We can refine previous results as follows.

\begin{Thm}\label{surjective} 
The map in Lemma \ref{bijcyclic} induces 
\begin{itemize}
\item[(i)] a one-to-one correspondence
$$\Ell(G)^{=} \longleftrightarrow (\mathcal{C}_{nm})^G.$$
\item[(ii)]  and when $\Ell(G)^{=} \neq \Ell(G)$ a one-to-one correspondence
$$\overline{\Ell}(G)^{\neq}  \longleftrightarrow (\mathcal{C}_{mf})^G.$$
\item[(iii)]Note that there exists a two-to-one surjective map 
$$\Ell(G)^{\neq} \longrightarrow \overline{\Ell}(G)^{\neq}$$
\end{itemize}
\end{Thm}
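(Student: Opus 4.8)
\textbf{Proof plan for Theorem \ref{surjective}.}

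The plan is to exploit the bijection of Lemma \ref{bijcyclic} between $\overline{E}(G)$ and $\mathcal{C}$ and then to pass to $G$-orbits on both sides, checking in each case that the $G$-action is compatible with the partition of $\mathcal{C}$ into $\mathcal{C}_{mf}$ and $\mathcal{C}_{nm}$ furnished by Proposition \ref{poss}. First I would recall that sending $\{\omega,\overline{\omega}\}$ to $G^{\omega}=G^{\overline{\omega}}$ is a bijection, and that $G$ acts on $\overline{E}(G)$ by $g\cdot\{\omega,\overline{\omega}\}=\{g(\omega),g(\overline{\omega})\}$, which corresponds under the bijection to the conjugation action $C\mapsto gCg^{-1}$ by Lemma \ref{elist}(ii). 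Hence the bijection of Lemma \ref{bijcyclic} is $G$-equivariant, and passing to orbit spaces yields a bijection $G\backslash\overline{E}(G)\longleftrightarrow \mathcal{C}/G$, where $\mathcal{C}/G=\{C^G:C\in\mathcal{C}\}$.

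For part (i), the key observation from Proposition \ref{poss} is that the condition $G\omega=G\overline{\omega}$ is equivalent to $C=G^{\omega}$ lying in $\mathcal{C}_{nm}$ (case (a), where $G^{\omega}\lneqq G_v$). When $G\omega=G\overline{\omega}$, the two fixed points $\omega$ and $\overline{\omega}$ already lie in a single $G$-orbit, so the unordered pair $\{G\omega,G\overline{\omega}\}$ collapses to the single orbit $G\omega$; thus the $G$-orbit on $\overline{E}(G)$ of such an $\{\omega,\overline{\omega}\}$ is determined by the single elliptic point $G\omega\in\Ell(G)^{=}$. This identifies the relevant part of $G\backslash\overline{E}(G)$ with $\Ell(G)^{=}$, and under the equivariant bijection the matching subset of $\mathcal{C}/G$ is exactly $(\mathcal{C}_{nm})^G$, giving the correspondence $\Ell(G)^{=}\longleftrightarrow(\mathcal{C}_{nm})^G$.

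For part (ii), I would argue dually: $G\omega\neq G\overline{\omega}$ corresponds to $C=G^{\omega}\in\mathcal{C}_{mf}$ (case (b)). Here $\omega$ and $\overline{\omega}$ lie in \emph{distinct} $G$-orbits, so the unordered pair $\{G\omega,G\overline{\omega}\}$ genuinely records two elliptic points; by definition this is precisely an element of $\overline{\Ell}(G)^{\neq}$. The orbit of $\{\omega,\overline{\omega}\}$ in $\overline{E}(G)$ is then in bijection with the pair $\{G\omega,G\overline{\omega}\}$, so restricting the equivariant bijection to the $\mathcal{C}_{mf}$-part yields $\overline{\Ell}(G)^{\neq}\longleftrightarrow(\mathcal{C}_{mf})^G$; the hypothesis $\Ell(G)^{=}\neq\Ell(G)$ simply guarantees that this case is nonempty. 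Finally, part (iii) is immediate: the map $G\omega\mapsto\{G\omega,G\overline{\omega}\}$ sends each of $G\omega$ and $G\overline{\omega}$ to the same unordered pair, and since $G\omega\neq G\overline{\omega}$ in this regime these are two distinct preimages, so the map is two-to-one and surjective by construction.

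The main obstacle I anticipate is the bookkeeping in part (ii) of verifying that $\omega$ and $\overline{\omega}$ really do lie in distinct $G$-orbits precisely when $C\in\mathcal{C}_{mf}$, so that the unordered pair $\{G\omega,G\overline{\omega}\}$ is well-defined and faithfully records the orbit data without accidental collapse; this is exactly the content one must read off carefully from Proposition \ref{poss} together with the maximal-finiteness criterion of \cite[Lemma 5.8]{MSquasi}, and it is what forces the distinction between the single-point correspondence in (i) and the pair correspondence in (ii).
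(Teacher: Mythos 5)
Your proposal is correct and matches the paper's approach: the paper states Theorem \ref{surjective} without a written proof, treating it as an immediate refinement of Lemma \ref{bijcyclic}, Lemma \ref{elist}(ii), Proposition \ref{poss}, and the maximal-finiteness criterion of \cite[Lemma 5.8]{MSquasi}, which are exactly the ingredients you use. Your argument simply makes explicit the details the paper leaves implicit, namely the $G$-equivariance of the bijection $\overline{E}(G)\leftrightarrow\mathcal{C}$, the passage to orbit spaces, and the two-to-one count in part (iii).
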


\begin{Cor}\label{acts4}  
$\mathrm{Aut}(G)$ acts on both $\Ell(G)^{=}$ and $\overline{\Ell}(G)^{\neq}$.
Actually, the action factors through the action of $\mathrm{Aut}(G)/\mathrm{Inn}(G)$.
\end{Cor}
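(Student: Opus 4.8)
The plan is to transport, along the canonical bijections furnished by Theorem \ref{surjective}, the action of $\mathrm{Aut}(G)$ on $G$-conjugacy classes of cyclic subgroups that was already established in Corollary \ref{acts2}. Recall that Theorem \ref{surjective}(i) gives a bijection $\Ell(G)^{=}\leftrightarrow(\mathcal{C}_{nm})^G$ and, when $\Ell(G)^{=}\neq\Ell(G)$, Theorem \ref{surjective}(ii) gives a bijection $\overline{\Ell}(G)^{\neq}\leftrightarrow(\mathcal{C}_{mf})^G$; both are induced, via Lemma \ref{bijcyclic} and Proposition \ref{poss}, by sending an elliptic orbit to the $G$-conjugacy class of the stabilizer of any of its representatives. (If $\Ell(G)^{=}=\Ell(G)$ then $\overline{\Ell}(G)^{\neq}$ is empty and the second assertion is vacuous.) Since by Corollary \ref{acts2} the group $\mathrm{Aut}(G)$ already acts on $(\mathcal{C}_{nm})^G$ and on $(\mathcal{C}_{mf})^G$, conjugating these actions by the bijections yields the sought actions on $\Ell(G)^{=}$ and on $\overline{\Ell}(G)^{\neq}$.

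Concretely, for $\sigma\in\mathrm{Aut}(G)$ and $G\omega\in\Ell(G)^{=}$ I would set $\sigma\cdot G\omega$ to be the elliptic orbit corresponding under the inverse bijection to $(\sigma(G^\omega))^G$, and similarly on $\overline{\Ell}(G)^{\neq}$. The one point needing care is well-definedness: a second representative $\omega'=g(\omega)$ of the same orbit has stabilizer $G^{\omega'}=gG^\omega g^{-1}$, so $\sigma(G^{\omega'})=\sigma(g)\sigma(G^\omega)\sigma(g)^{-1}$ is $G$-conjugate to $\sigma(G^\omega)$ but in general not equal to it. This is exactly why one passes to the conjugacy-class sets $(\mathcal{C}_{nm})^G$ and $(\mathcal{C}_{mf})^G$: the ambiguity in the representative is absorbed, so $(\sigma(G^\omega))^G$ is independent of the chosen $\omega$. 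That $\sigma$ preserves the two target sets separately is guaranteed by Theorem \ref{acts1}, since $\sigma$ carries $\mathcal{C}_{nm}$ to $\mathcal{C}_{nm}$ and $\mathcal{C}_{mf}$ to $\mathcal{C}_{mf}$; and $\sigma$ plainly sends $G$-conjugate subgroups to $G$-conjugate subgroups, as $\sigma(gCg^{-1})=\sigma(g)\sigma(C)\sigma(g)^{-1}$.

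The assertion that the action factors through $\mathrm{Aut}(G)/\mathrm{Inn}(G)$ is inherited directly from Corollary \ref{acts2}: an inner automorphism $\iota_g$ sends each cyclic subgroup $C$ to $gCg^{-1}$, hence fixes its $G$-conjugacy class $C^G$, and therefore acts trivially on $(\mathcal{C}_{nm})^G$ and $(\mathcal{C}_{mf})^G$. Since transporting a trivial action along a bijection keeps it trivial, $\mathrm{Inn}(G)$ acts trivially on $\Ell(G)^{=}$ and on $\overline{\Ell}(G)^{\neq}$, which gives the claimed factorisation. I expect no serious obstacle here, as the substance has been front-loaded into Theorem \ref{surjective} and Corollary \ref{acts2}; what remains is the formal verification that transporting a group action and its kernel along the canonical bijections behaves as expected, the single genuinely substantive observation being that passing to $G$-conjugacy classes, rather than to individual stabilizers, is precisely what renders the action well-defined on the $G$-orbits of elliptic points.
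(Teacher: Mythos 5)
Your proposal is correct and takes essentially the same route as the paper, which states Corollary \ref{acts4} as an immediate consequence of the bijections in Theorem \ref{surjective} and the action on $(\mathcal{C}_{nm})^G$ and $(\mathcal{C}_{mf})^G$ from Corollary \ref{acts2} (including the factorisation through $\mathrm{Aut}(G)/\mathrm{Inn}(G)$). Your explicit verification that passing to $G$-conjugacy classes absorbs the dependence on the representative $\omega$ is exactly the point the paper front-loads into those earlier results, so nothing is missing.
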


In \cite[Theorem 4.9]{MSquasi} we showed that $Quinn(G)$ acts freely and transitively on
$\Ell(G)^=$. 
If $\delta=1$, in Section \ref{sect:spike} we will construct a group of automorphisms that fully permutes
$\overline{\Ell}(G)^{\neq}$.

\section{Reiner automorphisms}\label{sect:reiner}

\noindent The automorphisms described in this section derive from a decomposition of $G$ as a free amalgamated due to Serre \cite[Theorem 10, p.119]{Serre}.  We make use of a version of this result which refers explicitly to matrices \cite[Theorem 4,7]{MasonTAMS}. The simplest case i.e. $A=\FF_q[t]$ of this result was first proved by Nagao \cite[Theorem 6, p.86]{Serre}. Automorphisms of this type were first introduced by Reiner \cite{Reiner} for the case $A=\FF_q[t]$. We begin with a more detailed version of Lemma \ref{list}. We recall 
\cite[ Proposition 2, p.76]{Serre} that every vertex stabilizer is finite.

\begin{Lem}\label{vertexstabilizer} 
For each $s \in \widehat{K}$ there exists an infinite half-line $\mathcal{L}$ in $\mathcal{T}$, where $\vert(\mathcal{L})=\{v_1, v_2, \cdots\}$ such that
\begin{itemize}\item[(i)] $$\bigcup_{i=1}^{\infty} G_{v_i}=G(s),$$
\item[(ii)] $G_{v_i} \leq G_{v_{i+1}}$\;($i \geq 1$).
\end{itemize}
\end{Lem}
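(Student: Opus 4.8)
The plan is to combine the Bruhat--Tits tree structure of $G$ with the filtration of the stabilizer $G(s)$ already provided by Lemma~\ref{list}. The key geometric input is that the cusp $s\in\widehat{K}$ corresponds, via the theory in \cite{Serre} and the explicit matrix version in \cite{MasonTAMS}, to an \emph{end} of the tree $\mathcal{T}$: the stabilizer $G(s)$ fixes a point of $\PP_1(K)$ that lives ``at infinity'' on $\mathcal{T}$, and a point at infinity is represented by an infinite half-line (geodesic ray) $\mathcal{L}=\{v_1,v_2,\dots\}$ emanating into the cuspidal end. First I would fix such a ray $\mathcal{L}$ pointing toward the end determined by $s$, with the vertices $v_1,v_2,\dots$ marching off to infinity along it.

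The main point to establish is part~(i), the identity $\bigcup_i G_{v_i}=G(s)$. The inclusion $\bigcup_i G_{v_i}\subseteq G(s)$ is the routine direction: each $G_{v_i}$ is a finite group (every vertex stabilizer is finite, as recalled just before the statement from \cite[Proposition~2, p.76]{Serre}), and since the $v_i$ all lie on the ray converging to the end fixed by $s$, any element fixing $v_i$ must preserve that end and hence fix $s$, giving $G_{v_i}\le G(s)$. For the reverse inclusion I would argue that an arbitrary $g\in G(s)$ fixes the end of $\mathcal{L}$; a tree isometry fixing an end either fixes a sub-ray pointwise or acts as a translation toward/away from that end (a hyperbolic element), but $G(s)$ contains no hyperbolic elements because all its nontrivial elements have finite order (each lies in some finite $G_n(s)$ by Lemma~\ref{list}(i)), and a finite-order isometry fixing an end must fix a vertex on every far-enough-out sub-ray. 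Concretely, I would match the filtration $G_n(s)=\bigcup$ of Lemma~\ref{list} against the vertices of the ray: because each $G_n(s)$ is finite, it fixes some vertex $v\in\vert(\mathcal{T})$ by the fixed-point property for finite groups acting on trees, and one shows this fixed vertex can be taken on $\mathcal{L}$ and that the fixed vertex recedes monotonically as $n$ grows. Re-indexing $\mathcal{L}$ so that $G_{n_s+i-1}(s)\le G_{v_i}$ then yields $G(s)=\bigcup_n G_n(s)\subseteq\bigcup_i G_{v_i}$.

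Part~(ii), the nesting $G_{v_i}\le G_{v_{i+1}}$, I would obtain from the geometry of the ray together with the convexity of fixed-point sets. Since $v_{i+1}$ lies strictly farther along $\mathcal{L}$ than $v_i$, and since along a cuspidal half-line the vertex stabilizers form an increasing chain (this is precisely the feature that makes $G(s)$ a union of larger and larger finite groups and is implicit in the amalgam decomposition of \cite{MasonTAMS}), any element fixing the nearer vertex $v_i$ together with the end will fix the segment out to $v_{i+1}$ as well. The cleanest way to package this is to choose the half-line so that its vertex stabilizers \emph{are} exactly the groups $G_n(s)$ of Lemma~\ref{list}: that lemma already gives $G_n(s)\le G_{n+1}(s)$, so transporting that chain onto $\mathcal{L}$ furnishes (ii) for free.

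The hard part will be the geometric identification in part~(i): pinning down that the filtration of $G(s)$ by finite subgroups is realized by the vertex stabilizers along a single half-line, i.e.\ that the ``algebraic'' exhaustion of Lemma~\ref{list} coincides with the ``tree-theoretic'' exhaustion along $\mathcal{L}$. I expect this to rest on the explicit amalgam/edge-indexed structure in \cite[Theorem 4.7]{MasonTAMS} (and its prototype, Nagao's theorem for $\FF_q[t]$ in \cite[Theorem 6, p.86]{Serre}), which describes the quotient graph $G\setminus\mathcal{T}$ as having a cuspidal ray for each ideal class with prescribed vertex groups; lifting that ray to $\mathcal{T}$ and reading off the stabilizers should give both the exhaustion and the monotonicity simultaneously, after checking that the lifted vertex groups match the $G_n(s)$ up to the initial index $n_s$.
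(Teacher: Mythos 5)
Your overall plan ends in the right place---lifting a cuspidal ray of $G\backslash\TTT$ from the structure theory of \cite[Section 4]{MasonTAMS} (equivalently \cite[Theorem 9, p.106]{Serre}), which is literally all the paper's proof consists of---but the step you label the ``routine direction'' is false as argued, and it is in fact the crux of the lemma. Fixing a vertex on a ray converging to the end determined by $s$ does \emph{not} force an element to preserve that end: a vertex stabilizer permutes all $q^{\delta}+1$ edges at that vertex and can swing the ray elsewhere. Concretely, for $A=\FF_q[t]$ and $s=\infty$, take the standard ray $\Lambda_0,\Lambda_1,\Lambda_2,\dots$ toward the end fixed by the upper triangular group (Nagao's picture, \cite[p.86]{Serre}): here $G_{\Lambda_0}=GL_2(\FF_q)$ contains the Weyl element ${0\ 1\choose 1\ 0}$, which fixes $\Lambda_0$ but interchanges the ends $0$ and $\infty$, so $G_{\Lambda_0}\not\leq G(\infty)$ and also $G_{\Lambda_0}\not\leq G_{\Lambda_1}$; for this ray both (i) and (ii) fail at the first step, and the union $\bigcup_{i\geq 0}G_{\Lambda_i}=GL_2(\FF_q)\cup B_2(\FF_q[t])$ is not even a subgroup. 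So the inclusion $G_{v_i}\leq G(s)$ cannot be had from the mere fact that the $v_i$ march toward the end; it holds only for a ray chosen as a lift of a cuspidal ray of the quotient graph, truncated past the finite part, and proving that such a lift exists with this property is exactly the nontrivial content that \cite[Section 4]{MasonTAMS} supplies.

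You also have the division of labor inverted: the direction you flag as hard, $G(s)\subseteq\bigcup_i G_{v_i}$, is the soft one, and your argument for it is essentially sound and works for \emph{any} ray toward the end: every element of $G(s)$ is torsion by Lemma \ref{list}(i), a torsion element is elliptic (the action is without inversion since determinants of elements of $G$ are units), and an elliptic element fixing the end fixes pointwise the geodesic from any of its fixed vertices to that end, hence (since any two rays to the same end share a tail) lies in $G_{v_i}$ for all large $i$. The same principle yields (ii) \emph{once the hard inclusion is known}: if $G_{v_i}\subseteq G(s)$, then each element of $G_{v_i}$ fixes $v_i$ and the end, hence the whole sub-ray from $v_i$, so $G_{v_i}\leq G_{v_{i+1}}$; without that inclusion, monotonicity fails, as the $\Lambda_0$ example shows. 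Finally, your ``cleanest packaging''---choose $\mathcal{L}$ so that its vertex stabilizers are exactly the $G_n(s)$ of Lemma \ref{list}---is close to circular: that filtration (quoted from \cite{MasonEdin}) is itself extracted from the same quotient-graph analysis, and the present lemma is the sharper, tree-theoretic statement of it. In short, your fallback citation is the paper's proof, but the surrounding argument you supply does not actually establish the two points it is meant to cover.
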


\begin{proof} 
See \cite[Section $4$]{MasonTAMS}.
\end{proof}

\noindent It is clear that
$$G(\infty)=G(0)^T=\left\{\left[\begin{array}{cc}\alpha&a\\0&\beta\end{array}\right] : \alpha,\beta \in \FF_q^*,\; a \in A \right\}. $$
\noindent  Suppose now that $s \neq 0,\infty$. Let
$$ M_s=\left[\begin{array}{cc}s&1\\1&0\end{array}\right].$$ Then $$ X \in G(s) 
\Longleftrightarrow 
X=\left[\begin{array}{cc}\alpha+cs&d\\c&\beta-sc\end{array}\right]=M_s\left[\begin{array}{cc}\beta&-c\\0&\alpha\end{array}\right]M_s^{-1},$$
where $\alpha ,\beta \in \FF_q^*, \; c \in A\cap As^{-1}$ and $\mathrm{det}(X)=\alpha\beta$. See \cite[Theorem 2.1]{MasonEdin}. We denote $X$ by $[\alpha,\beta,c]$. Let $\mathfrak{q}_s$ denote the $A$-ideal $A\cap As^{-1}\cap As^{-2}$. Then
$$U(s)=\left\{u(c)=[1,1,c]:\; c \in \mathfrak{q}_s\right\},$$
\noindent The homomorphism $\psi: G(s) \rightarrow \FF_q^* \times \FF_q^*$ defined by
$$\psi(X)=(\alpha,\beta)$$
\noindent is \it surjective \rm (since $A$ is a Dedekind domain). See \cite[Corollary 3.2]{MasonEdin}. Let $S$ be a subset of $G(s)$, where $|S|=(q-1)^2$, for which $\psi(S)=\FF_q^* \times \FF_q^*$. Then from Lemma \ref{vertexstabilizer} there exists $i$ such that
$$\psi(G_{v_i})=\FF_q^* \times \FF_q^*.$$
\noindent Let $i_0$ be the smallest such integer. 

\begin{Lem}\label{finite} 
For all $s \in \widehat{K}$ there exists a finite subgroup $F$ of $G(s)$ for which 
\begin{itemize}
\item[(i)] $\psi(F)=\FF_q^* \times \FF_q^*.$ \
\item[(ii)] $G(s)=F.U(s).$
\end{itemize}
\end{Lem}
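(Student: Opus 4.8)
The plan is to take $F = G_{v_{i_0}}$, the vertex stabilizer sitting at the smallest index $i_0$ for which $\psi(G_{v_{i_0}}) = \FF_q^* \times \FF_q^*$; the existence of such an index was already secured in the discussion preceding the statement, since a finite set $S$ with $\psi(S) = \FF_q^* \times \FF_q^*$ must lie inside some $G_{v_i}$ of the ascending chain of Lemma \ref{vertexstabilizer}. First I would note that this $F$ settles part (i): by \cite[Proposition 2, p.76]{Serre} every vertex stabilizer is finite, and $G_{v_{i_0}} \leq G(s)$ by Lemma \ref{vertexstabilizer}, so $F$ is a finite subgroup of $G(s)$, while $\psi(F) = \FF_q^* \times \FF_q^*$ holds by the very definition of $i_0$.

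For part (ii) the idea is to reduce everything to the short exact sequence $1 \to U(s) \to G(s) \xrightarrow{\psi} \FF_q^* \times \FF_q^* \to 1$. The point I would establish is that $U(s)$ is exactly $\ker\psi$: every $X = [\alpha,\beta,c] \in G(s)$ is conjugate by $M_s$ to an upper-triangular matrix with diagonal $\beta,\alpha$, so its eigenvalues are $\alpha,\beta$, and hence $X$ is unipotent precisely when $\alpha = \beta = 1$, i.e. when $\psi(X) = (1,1)$. Comparing with the explicit form $U(s) = \{[1,1,c] : c \in \mathfrak{q}_s\}$ recorded above then identifies the kernel with $U(s)$. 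Granting this, I would finish part (ii) formally: for any $g \in G(s)$, surjectivity of $\psi|_F$ furnishes $f \in F$ with $\psi(f) = \psi(g)$, so that $f^{-1}g \in \ker\psi = U(s)$ and thus $g \in F \cdot U(s)$; the reverse inclusion $F\cdot U(s) \subseteq G(s)$ is trivial, giving $G(s) = F \cdot U(s)$.

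I do not expect a serious obstacle here: the lemma is essentially a repackaging of the exact sequence above together with the fact that the finite chain of vertex stabilizers exhausting $G(s)$ must eventually surject onto the finite quotient $\FF_q^* \times \FF_q^*$. The one place that will need a moment's care is the identification $U(s) = \ker\psi$, specifically checking that a unipotent element carries the more restrictive condition $c \in \mathfrak{q}_s$ rather than merely $c \in A \cap As^{-1}$; this comes directly from imposing integrality on the upper-right entry of $X$ in the case $\alpha = \beta$, which is already implicit in the matrix description of $G(s)$ recorded above.
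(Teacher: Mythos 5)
Your proof is correct and follows essentially the same route as the paper: the paper likewise takes $F=G_{v_{i_0}}$ and asserts that (ii) ``follows immediately from (i),'' which is precisely the coset argument via $\ker\psi=U(s)$ that you spell out. The only cosmetic difference is that the paper dispatches $s=0,\infty$ separately as obvious (since the parametrization $[\alpha,\beta,c]$ and the index $i_0$ were set up only for $s\neq 0,\infty$), a case your argument handles just as easily once $\psi$ is read off the explicit triangular form of $G(\infty)=G(0)^T$.
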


\begin{proof} 
When $s=0,\infty$ the result is obvious. When $s \neq 0,\infty$  we may take, for example,  $F=G_{v_{i_0}}$. Part (ii) follows immediately from part (i).
\end{proof}
\noindent When $s \neq 0,\infty$ it is clear that $U(s) \cong \mathfrak{q}_s^+$, the additive group of the $\FF_q$-space 
$\mathfrak{q}_s$. (It is clear that $U(0) \cong U(\infty) \cong A^+$.) Let $V$ be any subspace of $\mathfrak{q}_s$. Then $G(s)$ acts by conjugation on $U(V)=\{u(v): v \in V\}$ as scalar multiplication. Let $F \cap U(s)=U(V_0)$, where $V_0$ is a \it finite-dimensional \rm subspace and let $\mathfrak{q}_s= V_0 \oplus V^*$. Then
 $$G(s)=F.U(V^*)\;\mathrm{and} \;F \cap U(V^*)=1.$$

\noindent  This is also true for $s=0,\infty$. We can state the following without proof.

\begin{Lem} \label{auto1} 
Let $s \in \widehat{K}$. With the above notation, let $\phi: V \rightarrow V$ be a $\FF_q$-linear map for which 
\begin{itemize} 
\item[(i)] $\phi$ acts as the identity on $V_0$,
\item[(ii)] $\phi$ acts as an automorphism of $V^*$.
\end{itemize} 
\noindent Let $f \in F$ and $v \in V^*$. Then the map $\phi(=\phi_{s,F}):G(s) \rightarrow G(s)$ defined by
$$ \phi(fu(v))=fu(\phi(v)),$$
\noindent is an automorphism of $G(s)$.
\end{Lem}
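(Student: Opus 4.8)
The plan is to verify that the map $\phi$ defined on $G(s)$ is a well-defined bijective homomorphism. The key structural input is the semidirect-type decomposition $G(s)=F.U(V^*)$ together with $F\cap U(V^*)=1$, which guarantees that every element of $G(s)$ can be written \emph{uniquely} as $fu(v)$ with $f\in F$ and $v\in V^*$. Thus $\phi$ is well-defined as a set map; the content is in checking multiplicativity and bijectivity.

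\medskip
\noindent First I would record the conjugation action: since $G(s)$ acts on $U(V)$ by scalar multiplication, for $f\in F$ and $v\in V$ we have $fu(v)f^{-1}=u(\lambda_f v)$ for some scalar $\lambda_f\in\FF_q^*$ depending only on $f$ (via $\psi$). The crucial compatibility is that an $\FF_q$-linear map $\phi$ commutes with this scalar multiplication, i.e.\ $\phi(\lambda_f v)=\lambda_f\phi(v)$. Now take two elements $f_1u(v_1)$ and $f_2u(v_2)$ with $v_1,v_2\in V^*$. Their product is
$$(f_1u(v_1))(f_2u(v_2))=f_1f_2\cdot\big(f_2^{-1}u(v_1)f_2\big)u(v_2)=f_1f_2\,u(\lambda_{f_2}^{-1}v_1+v_2).$$
To apply $\phi$ I must split $f_1f_2\in F$ and the unipotent part according to the decomposition. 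Here $\lambda_{f_2}^{-1}v_1+v_2\in V^*$ since $V^*$ is $\FF_q$-stable under scalar multiplication, so the product is already in normal form with $F$-part $f_1f_2$ and $V^*$-part $\lambda_{f_2}^{-1}v_1+v_2$. Then
$$\phi\big((f_1u(v_1))(f_2u(v_2))\big)=f_1f_2\,u\big(\phi(\lambda_{f_2}^{-1}v_1+v_2)\big)=f_1f_2\,u\big(\lambda_{f_2}^{-1}\phi(v_1)+\phi(v_2)\big),$$
using $\FF_q$-linearity of $\phi$ and its commuting with the scalar $\lambda_{f_2}^{-1}$. On the other hand, computing $\phi(f_1u(v_1))\cdot\phi(f_2u(v_2))=f_1u(\phi(v_1))\cdot f_2u(\phi(v_2))$ and repeating the conjugation identity gives the same expression. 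This establishes that $\phi$ is a homomorphism.

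\medskip
\noindent For bijectivity, I would observe that $\phi$ restricts to the identity on $F$ and on $U(V_0)$ (by hypothesis (i)), and restricts to an automorphism of $U(V^*)$ (by hypothesis (ii), since $\phi|_{V^*}$ is an $\FF_q$-automorphism of $V^*$ and $u\colon V^*\to U(V^*)$ is an isomorphism of groups). Injectivity and surjectivity then follow from uniqueness of the normal form $fu(v)$: the inverse is given by the analogous map built from $\phi^{-1}|_{V^*}$. The main obstacle, and the point requiring genuine care rather than routine checking, is the interaction between the $F$-part and the $V^*$-part under multiplication—specifically ensuring that the scalar twist $\lambda_{f_2}^{-1}$ introduced by conjugation is absorbed correctly by $\phi$. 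This works precisely because $\phi$ is $\FF_q$-\emph{linear} (so it commutes with scalar multiplication by elements of $\FF_q^*$) and because $V^*$ is a complement stable under that scalar action, so the normal form is preserved; were $\phi$ merely additive but not $\FF_q$-linear, the homomorphism property would fail. Consequently $\phi_{s,F}$ is an automorphism of $G(s)$, as claimed.
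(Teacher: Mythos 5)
Your proof is correct. Note that the paper states this lemma explicitly \emph{without} proof, regarding the verification as routine; your argument supplies exactly the details being taken for granted: uniqueness of the normal form $fu(v)$ coming from $G(s)=F\cdot U(V^*)$ with $F\cap U(V^*)=1$, normality of $U(V^*)$ in $G(s)$ (because $V^*$ is an $\FF_q$-subspace and conjugation acts by $\FF_q^*$-scalars), and the fact that $\FF_q$-linearity of $\phi$ is precisely what lets the scalar twist $\lambda_{f_2}^{-1}$ arising from conjugation pass through $\phi$. This is the natural argument, and your closing remark correctly identifies why mere additivity of $\phi$ would not suffice when $q>2$.
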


\begin{Def}
A \it cuspidal ray \rm in a graph $\mathcal{G}$ is an infinite half-line without backtracking all of whose vertices have  valency $2$ with the exception of its terminal vertex.
\end{Def}

\begin{Thm}\label{amalgam1} Let $s \in \widehat{K}$. There exists a subgroup $H \leq G$ and a finite subgroup $J \leq G(s)\cap H$ such that
$$G=G(s) \star_{\quad J} H.$$
\end{Thm}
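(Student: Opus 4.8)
The plan is to realize $G$ as the fundamental group of a graph of groups coming from the action of $G$ on the Bruhat--Tits tree $\TTT$, and then to extract a single edge whose collapse exhibits $G$ as an amalgam with $G(s)$ as one vertex group. Concretely, I would invoke the decomposition of $G$ due to Serre \cite[Theorem 10, p.119]{Serre}, in the matrix-explicit form of \cite[Theorem 4.7]{MasonTAMS}. By Lemma \ref{vertexstabilizer} there is an infinite half-line $\mathcal{L}$ with vertices $v_1,v_2,\dots$ whose stabilizers exhaust $G(s)$ and are nested; the cusp $s$ corresponds to the end of $\mathcal{L}$. The terminal vertex $v_1$ of this ray (after passing to the quotient graph $G\setminus\TTT$) sits at the end of what the preceding Definition calls a \emph{cuspidal ray}: every interior vertex has valency $2$, so these vertices contribute no genuine branching to the amalgam.

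First I would pass to the quotient graph of groups $G\setminus\TTT$ and locate the image of $s$ as the terminal end of a cuspidal ray $R$. The key point is that along $R$ each interior edge group equals the adjacent vertex group (valency $2$ forces the edge stabilizers and vertex stabilizers along the ray to coincide up to the nesting $G_{v_i}\le G_{v_{i+1}}$ of Lemma \ref{vertexstabilizer}(ii)). Collapsing the ray $R$ to its terminal vertex therefore does not change the fundamental group but replaces the infinite chain of vertex groups by their union, which is exactly $G(s)$ by Lemma \ref{vertexstabilizer}(i). This is the structural heart of the argument: a cuspidal ray can be ``swallowed'' into a single vertex group without altering $G$, precisely because the valency-$2$ condition makes the Bass--Serre presentation along $R$ degenerate.

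Having collapsed $R$, I would identify the single remaining edge $e$ incident to the collapsed vertex. Cutting $\TTT$ (or equivalently the graph of groups) along the $G$-orbit of $e$ splits $G$ as a free product amalgamated over the edge stabilizer $J:=G_e$. One factor is the vertex group at the collapsed vertex, namely $G(s)$; the other factor $H$ is the fundamental group of the complementary graph of groups. Since every vertex stabilizer is finite \cite[Proposition 2, p.76]{Serre}, the edge group $J$ is a finite subgroup of $G_{v_1}\le G(s)$, and $J\le H$ by construction, giving $J\le G(s)\cap H$ and the desired
\[
G = G(s)\star_{J} H.
\]
For the special cusps $s=0,\infty$ this is the classical Nagao-type splitting and can be read off directly.

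The main obstacle I anticipate is making rigorous the claim that collapsing the cuspidal ray is legitimate at the level of fundamental groups of graphs of groups --- i.e.\ justifying that the nested union $\bigcup_i G_{v_i}=G(s)$ really occurs as a vertex group after the collapse, and that the edge group $J$ is genuinely finite and embeds in both factors. This requires care because $G(s)$ is not finitely generated, so $G(s)$ arises as an \emph{ascending} union of finite vertex groups rather than as a single stabilizer; one must check that the Bass--Serre machinery tolerates this ascending union. The cleanest way to handle this is to argue directly from the explicit matrix description of $G(s)$ given just before the statement (the parametrization $X=[\alpha,\beta,c]$, the surjection $\psi$, and the decomposition $G(s)=F.U(s)$ from Lemma \ref{finite}), together with the nesting in Lemma \ref{vertexstabilizer}, so that the limiting vertex group is exhibited concretely as $G(s)$ and the edge group $J$ can be taken to be a specific finite subgroup such as $G_{v_1}\cap G_e$.
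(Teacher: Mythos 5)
Your proposal is correct and follows essentially the same route as the paper: Serre's description of $G\setminus\TTT$ as a finite graph with cuspidal rays attached (one per cusp), the lift of the ray belonging to $s$ to the half-line of Lemma \ref{vertexstabilizer}, and Bass--Serre theory to split off the ascending union $\bigcup_i G_{v_i}=G(s)$ as one factor, amalgamated over a finite stabilizer $J=G_{v_j}$. One small bookkeeping point: since the terminal (attaching) vertex of a cuspidal ray may have valency greater than $2$, you should cut along the first edge \emph{of the ray} (whose stabilizer equals the smaller vertex group $G_{v_j}$ by the nesting) rather than along ``the single remaining edge incident to the collapsed vertex''; the union of the remaining vertex groups is still all of $G(s)$, so the conclusion is unchanged.
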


\begin{proof} Serre \cite[Theorem 9, p.106]{Serre} has proved that the quotient graph has the following structure
$$ G \backslash \TTT=Y \cup\left(\bigcup_{1 \leq i \leq n}\mathcal{R}_i\right),$$
where $Y$ is finite and each $\mathcal{R}_i$ is a cuspidal ray. In addition the elements of the set $\left\{ \mathcal{R}_i \right\}$ are in one-one correspondence with those of $\Cusp(G)= G \backslash \widehat{K}$, so that $n=|\Cl(A)|$. See \cite[pp. 104-106]{Serre}. \\
\noindent A presentation for $G$ is then derived from a lift of a maximal tree of $G \backslash \TTT$ to $\TTT$ 
by \cite[Theorem 13, p.55]{Serre}. It can be shown \cite[Section $4$]{MasonTAMS} that, for any $s \in \widehat{K}$, some $\mathcal{R}_i$ lifts to an infinite half-line $\mathcal{L}$ in $\TTT$ as described in Lemma \ref{vertexstabilizer}. Then
$$G=G(s) \star_{\quad J} H,$$
where $J=G_{v_j}$, for some $j$.
\end{proof}

\noindent An immediate consequence is one of the principal results in this section.

\begin{Cor} \label{autocusp} 
With the above notation let $j_0 \geq j$ be the smallest integer for which $F=G_{v_{j_0}}$ maps onto $\FF_q^* \times \FF_q^*$. Now let $\phi(=\phi_{s,F}) \in \mathrm{Aut}(G(s))$ as in Lemma \ref{auto1}.
Let
$$\phi^*(g)=\left\{\begin{array}{ccc}\phi(g) & , & g \in G(s) \\ g & , & g \in H \end{array} \right.$$
Then $\phi^*$ extends to an automorphism of $G$.
\end{Cor}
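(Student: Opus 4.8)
The plan is to verify that the prescribed map $\phi^*$ respects the amalgamated free product structure $G = G(s) \star_J H$ established in Theorem~\ref{amalgam1}, since a homomorphism out of an amalgam is determined by, and exists for, any compatible pair of homomorphisms on the two factors. First I would recall the universal property: given the pushout diagram defining $G(s)\star_J H$, a homomorphism $G \to G$ is induced by homomorphisms $\phi_1 : G(s) \to G$ and $\phi_2 : H \to G$ precisely when they agree on the amalgamated subgroup $J$, i.e. $\phi_1|_J = \phi_2|_J$. Here we take $\phi_1 = \phi = \phi_{s,F}$ and $\phi_2 = \mathrm{id}_H$, so the compatibility condition reduces to checking that $\phi$ acts as the identity on $J = G_{v_j}$.

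The key step is therefore the compatibility verification on $J$. Recall from the setup that $J = G_{v_j}$ with $j \le j_0$, and $F = G_{v_{j_0}}$. By Lemma~\ref{vertexstabilizer}(ii) the vertex stabilizers form an ascending chain, so $J = G_{v_j} \le G_{v_{j_0}} = F$. Thus $J \subseteq F$, and by the definition of $\phi$ in Lemma~\ref{auto1}, every element $f \in F$ is fixed: writing a general element of $G(s)$ as $f\,u(v)$ with $f \in F$ and $v \in V^*$, the elements of $F$ correspond to the case $v = 0$, on which $\phi(f\,u(0)) = f\,u(\phi(0)) = f$. Hence $\phi|_J = \mathrm{id}_J = \mathrm{id}_H|_J$, and the two factor maps agree on the amalgamated subgroup. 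This yields a well-defined endomorphism $\phi^*$ of $G$ extending both $\phi$ and $\mathrm{id}_H$.

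It remains to confirm that $\phi^*$ is an \emph{automorphism}, not merely an endomorphism. For this I would exhibit a two-sided inverse by the same amalgam construction. Lemma~\ref{auto1} guarantees that $\phi = \phi_{s,F}$ is an automorphism of $G(s)$ (since it is built from an $\FF_q$-linear automorphism of $V^*$ fixing $V_0$), so its inverse $\phi^{-1}$ is again a map of the type covered by Lemma~\ref{auto1}, fixing $F$ pointwise. Applying the amalgam universal property to the compatible pair $(\phi^{-1}, \mathrm{id}_H)$ produces an endomorphism $(\phi^{-1})^*$ of $G$. Since $\phi^* \circ (\phi^{-1})^*$ and $(\phi^{-1})^* \circ \phi^*$ agree with $\phi \circ \phi^{-1} = \mathrm{id}$ on the generating factor $G(s)$ and with $\mathrm{id}$ on $H$, and a homomorphism out of the amalgam is determined by its restrictions to the two factors, both composites equal $\mathrm{id}_G$. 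Hence $\phi^*$ is invertible.

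The step I expect to be the main obstacle is the compatibility check on $J$: one must be confident that $J = G_{v_j}$ genuinely sits inside $F = G_{v_{j_0}}$ and lies in the ``$F$-part'' rather than involving any of the $U(V^*)$ direction on which $\phi$ acts nontrivially. This is exactly why the Corollary stipulates $j_0 \ge j$ with $F = G_{v_{j_0}}$ the smallest vertex stabilizer surjecting onto $\FF_q^* \times \FF_q^*$; the ordering $j \le j_0$ together with the nesting of Lemma~\ref{vertexstabilizer}(ii) is what forces $J \subseteq F$ and makes $\phi|_J$ trivial. Once this containment is pinned down, the remainder is the formal application of the universal property of amalgamated products, which is routine.
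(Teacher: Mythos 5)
Your proof is correct and follows essentially the same route as the paper, which simply invokes the normal form theorem for amalgamated free products (equivalently, the universal property you use) and notes that the inverse is defined in the obvious way. Your explicit verification that $J=G_{v_j}\leq G_{v_{j_0}}=F$ via Lemma \ref{vertexstabilizer}(ii), so that $\phi|_J=\mathrm{id}_J$, is exactly the compatibility point the paper leaves implicit.
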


\begin{proof} 
Follows from the normal form theorem for free amalgamated products. The inverse of $\phi^*$ is defined in the obvious way.
\end{proof}

\begin{Rk} (i) Automorphisms of this type were first introduced by Reiner 
\cite{Reiner} for the simplest case $A=\FF_q[t]$ in \cite{Reiner}. He refers to such automorphisms as \it non-standard. \rm \\ \\
(ii) The special case $s=\infty$ is already known \cite[Theorem 2.4]{MSJPAA}. \\ \\
(iii) Theorem \ref{amalgam1} was first proved for the case $A=\FF_q[t]$ by Nagao. See \cite[Theorem 6, p.86]{Serre}. In this case we may take $J=B_2(\FF_q[t])$.
\end{Rk}
\noindent Let $\rho_s$ denote the automorphism described in Corollary  \ref{autocusp}. We record one important property.

\begin{Cor}\label{fixcusp} Under the action described in Theorem \ref{permcusp} each automorphism $\rho_s$ fixes every element of $\Cusp(G)$.
\end{Cor}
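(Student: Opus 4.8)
The plan is to show that each Reiner automorphism $\rho_s$ acts as the identity on $\mathrm{Cusp}(G)$, using the description of the action from Theorem \ref{permcusp}. Recall that $\sigma^*$ is determined by the underlying permutation $\widetilde{\sigma}$ of $\widehat{K}$, which in turn is read off from how $\sigma$ permutes the stabilizers $G(t)$ via $\sigma(G(t))=G(\widetilde{\sigma}(t))$. So it suffices to understand $\widetilde{\rho_s}$, and in particular to pin down $\rho_s(G(t))$ for each $t\in\widehat{K}$.

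First I would treat the cusp $s$ itself. By Corollary \ref{autocusp} the automorphism $\rho_s$ is built from $\phi_{s,F}\in\mathrm{Aut}(G(s))$ on the free factor $G(s)$ and the identity on $H$, via the amalgam $G=G(s)\star_J H$ of Theorem \ref{amalgam1}. Since $\phi_{s,F}$ is by construction an automorphism \emph{of} $G(s)$, we have $\rho_s(G(s))=G(s)$, so $\widetilde{\rho_s}(s)=s$ and hence $\rho_s$ fixes the cusp $G[s]$ represented by $s$. This handles the class of $s$.

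Next I would handle an arbitrary $t\in\widehat{K}$ representing a (possibly different) cusp. The key observation is that the normal form theorem for the amalgamated product controls conjugates of the factors: since $\rho_s$ fixes $H$ pointwise and restricts to an automorphism of the factor $G(s)$, it maps each conjugate of a subgroup of a factor to a conjugate of a subgroup of a factor. More concretely, for $t$ not in the $G$-orbit of $s$, I expect $G(t)$ to be conjugate into $H$ (its associated cuspidal ray is one of the $\mathcal{R}_i$ carried by $H$ rather than the one lifting to $\mathcal{L}$), so $\rho_s(G(t))$ is a conjugate of $G(t)$ by an element of $G$; this gives $G[\widetilde{\rho_s}(t)]=G[t]$, i.e. $\rho_s$ fixes that cusp as well. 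The upshot is that whatever $\widetilde{\rho_s}$ does to individual points of $\widehat{K}$, it never moves a point out of its $G$-orbit, so $\rho_s^*$ is the identity on $\Cusp(G)=G\backslash\widehat{K}$.

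The main obstacle, and the step I would spend the most care on, is making precise the claim that $\rho_s$ sends each $G(t)$ to a $G$-conjugate of itself (equivalently, $\widetilde{\rho_s}(t)\in Gt$ for all $t$). The cleanest route is to invoke the normal form / conjugacy theory for amalgamated products: every finite subgroup of $G=G(s)\star_J H$ is conjugate into one of the two factors, and an automorphism that stabilizes one factor and fixes the other pointwise must carry the conjugacy class of each finite subgroup of a factor back into the conjugacy class of a subgroup of a factor. Combined with Theorem \ref{rational points}, which guarantees $\rho_s(G(t))=G(t')$ is again a full point-stabilizer, and with the bijection between cusps and $G$-conjugacy classes of stabilizers established earlier in Section \ref{sect:cusps}, this forces $t'$ and $t$ to lie in the same $G$-orbit. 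I would therefore organize the proof as: (1) $\rho_s(G(s))=G(s)$ directly; (2) for $t$ in the orbit of $s$, use $G$-equivariance of the construction; and (3) for $t$ outside the orbit of $s$, use that $G(t)$ is conjugate into the pointwise-fixed factor $H$, so $\rho_s(G(t))$ is a conjugate of $G(t)$, concluding in all cases that $\rho_s^*=\mathrm{id}$ on $\Cusp(G)$.
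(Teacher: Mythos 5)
Your proof is correct and is essentially the argument the paper intends: in fact the paper states Corollary \ref{fixcusp} without any proof, regarding it as immediate from the construction of $\rho_s$ in Corollary \ref{autocusp}. The step you rightly single out as needing care---that every cusp other than $Gs$ has a representative $t$ with $G(t)$ conjugate into the pointwise-fixed factor $H$---is exactly what the decomposition of Theorem \ref{amalgam1} supplies, since the cuspidal rays other than the one lifting to $\mathcal{L}$ lift to half-lines whose vertex stabilizers, and hence their union, which is the full point stabilizer by Lemma \ref{vertexstabilizer}, lie in $H$; one small caution is that your side remark invoking conjugacy of \emph{finite} subgroups into the factors does not literally apply to the infinite groups $G(t)$, but your final organization (1)--(3) does not rely on it, so the argument stands.
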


\begin{Def} 
In view of \cite{Reiner} we refer to $\rho$ as a \it Reiner automorphism. \rm  
\end{Def}

\begin{Thm}\label{uncountable} 
Let $\mathcal{R}_s$ be the set of all Reiner automorphisms $\rho_s$. Then
$$\mathrm{Card}(\mathrm{Aut}(G))=\mathrm{Card}(\mathcal{R}_s) =2^{\aleph_0}.$$
\end{Thm}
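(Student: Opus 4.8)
The plan is to sandwich all three cardinalities between $2^{\aleph_0}$ from above and below, exploiting the inclusion $\mathcal{R}_s\subseteq\mathrm{Aut}(G)$. Concretely I would prove the chain
$2^{\aleph_0}\le\mathrm{Card}(\mathcal{R}_s)\le\mathrm{Card}(\mathrm{Aut}(G))\le 2^{\aleph_0}$,
whose outer inequalities are the only substantive points. For the upper bound, note that $K$ is finitely generated over the finite field $\FF_q$, hence countable, so $A$ and therefore $G=GL_2(A)$ are countable. Any automorphism of $G$ is in particular a function $G\to G$, so $\mathrm{Aut}(G)$ embeds into the set $G^G$ of all such functions, of cardinality $\aleph_0^{\aleph_0}=2^{\aleph_0}$. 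Thus $\mathrm{Card}(\mathrm{Aut}(G))\le 2^{\aleph_0}$, and the middle inequality is just the inclusion.

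For the lower bound on $\mathrm{Card}(\mathcal{R}_s)$, fix the amalgam decomposition $G=G(s)\star_{J}H$ and a finite subgroup $F$ as in Corollary \ref{autocusp}, together with the splitting $\mathfrak{q}_s=V_0\oplus V^*$ (for $s=0,\infty$ replace $\mathfrak{q}_s$ by $A$). The decisive structural fact is that $V^*$ is \emph{infinite-dimensional} over $\FF_q$: indeed $\mathfrak{q}_s$ is a nonzero ideal of the Dedekind domain $A$ (it contains the nontrivial group $U(s)$), so $A/\mathfrak{q}_s$ is finite and $\mathfrak{q}_s$ has finite codimension in the infinite-dimensional $\FF_q$-space $A$, while $V_0$ is finite-dimensional. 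I would then exhibit $2^{\aleph_0}$ admissible maps directly: choose a countable basis $\{v_1,v_2,\ldots\}$ of $V^*$, and for each sequence $\epsilon=(\epsilon_i)_{i\ge 1}\in\{0,1\}^{\NN}$ define the $\FF_q$-linear automorphism $\phi_{\epsilon}$ of $V^*$ by $\phi_{\epsilon}(v_{2i-1})=v_{2i-1}$ and $\phi_{\epsilon}(v_{2i})=v_{2i}+\epsilon_i v_{2i-1}$, extended by the identity on $V_0$. Each $\phi_{\epsilon}$ satisfies the hypotheses of Lemma \ref{auto1}, hence yields a Reiner automorphism $\rho_{s}^{\epsilon}=\phi_{\epsilon}^{*}$ by Corollary \ref{autocusp}.

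It remains to see that $\epsilon\mapsto\rho_{s}^{\epsilon}$ is injective. By construction $\rho_{s}^{\epsilon}(u(v))=u(\phi_{\epsilon}(v))$ for all $v\in V^*$, and $v\mapsto u(v)$ is an isomorphism of $V^*$ onto the subgroup $U(V^*)\le G$; therefore the restriction of $\rho_{s}^{\epsilon}$ to $U(V^*)$ recovers $\phi_{\epsilon}|_{V^*}$, so distinct sequences $\epsilon$ give distinct automorphisms of $G$. Since $|\{0,1\}^{\NN}|=2^{\aleph_0}$, this gives $\mathrm{Card}(\mathcal{R}_s)\ge 2^{\aleph_0}$, and the sandwich collapses to $\mathrm{Card}(\mathrm{Aut}(G))=\mathrm{Card}(\mathcal{R}_s)=2^{\aleph_0}$.

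There is no serious obstacle here: the argument is essentially cardinal arithmetic, and the only point demanding genuine care is the infinite-dimensionality of $V^*$ over $\FF_q$, which is precisely what makes the explicit family of $2^{\aleph_0}$ distinct maps $\phi_{\epsilon}$ available. Everything else follows from the countability of $G$ (capping $\mathrm{Aut}(G)$ at $2^{\aleph_0}$) together with the fact that this single family $\{\rho_{s}^{\epsilon}\}$ already realizes that bound.
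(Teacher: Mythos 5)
Your proof is correct, but it takes a genuinely different route from the paper's. Both arguments rest on the same structural fact --- the $\FF_q$-space $\mathfrak{q}_s$ (resp.\ $A$ for $s=0,\infty$) is infinite-dimensional, so Lemma \ref{auto1} and Corollary \ref{autocusp} produce a huge supply of Reiner automorphisms --- but they witness distinctness differently. The paper reduces to $s=\infty$ and follows the method of \cite{MSJPAA}: it invokes an uncountable family of finite-codimensional subspaces $V_\lambda$ of $A$ with corresponding Reiner automorphisms $\tau^\lambda$, and separates the $\tau^\lambda$ by showing that the images $\tau^{\lambda}(\Gamma(\mathfrak{q}))$ of a fixed principal congruence subgroup are pairwise distinct finite-index subgroups, recoverable from $V_\lambda$ via the unipotent matrices $T(a)$ they contain. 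You instead distinguish the automorphisms pointwise: you build an explicit injection $\{0,1\}^{\NN}\hookrightarrow\mathcal{R}_s$, $\epsilon\mapsto\rho_s^{\epsilon}$, from shear-type linear automorphisms $\phi_\epsilon$ of $V^*$, and recover $\phi_\epsilon$ from the restriction of $\rho_s^{\epsilon}$ to $U(V^*)$. Your version is more elementary and self-contained (no appeal to congruence subgroups or to \cite{MSJPAA}), it treats all $s$ uniformly instead of reducing to $s=\infty$, and it makes explicit the upper bound $\mathrm{Card}(\mathrm{Aut}(G))\leq 2^{\aleph_0}$ via countability of $G$, which the paper leaves tacit; what the paper's route buys in exchange is the stronger by-product that a single congruence subgroup $\Gamma(\mathfrak{q})$ has uncountably many distinct finite-index images under Reiner automorphisms, the fact underlying the non-congruence applications of \cite{MSJPAA}. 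One phrasing slip to fix: $\mathfrak{q}_s$ does not ``contain'' $U(s)$; rather $U(s)=\{u(c):c\in\mathfrak{q}_s\}\cong\mathfrak{q}_s^{+}$, so it is the non-triviality of $U(s)$ that forces $\mathfrak{q}_s\neq 0$, as your dimension count requires.
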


\begin{proof}  
Clearly it suffices to prove this for the case $s=\infty$. The proof is based on an approach introduced in \cite{MSJPAA}. Let $T(a)=I_2+E_{12}\;(a \in A)$ and for each $A$-ideal $\mathfrak{q}$ let
$$\Gamma(\mathfrak{q})=\{ X \in SL_2(A):X \equiv I_2 \;(\mod\;\mathfrak{q}) \}.$$
By Corollary \ref{autocusp} for the special case $\tau=\rho_{\infty}$ restricted to $\Gamma(\mathfrak{q})$ it follows that, for almost all $\mathfrak{q}$, there exist uncountable sets $\{V_{\lambda}: \lambda \in \Lambda \}$ and $\{\tau^{\lambda}: \lambda \in \Lambda\}$, where each $\tau^{\lambda}$ is a Reiner automorphism and $V_{\lambda}$ is a finite codimensional $\FF_q$-subspace of $A$, for which, under the one-one correspondence $\tau^{\lambda} \leftrightarrow V_{\lambda}$,
$$ \{ a \in A: T(a) \in \tau^{\lambda}(\Gamma(\mathfrak{q}))\}= V_{\lambda}.$$
There are then uncountably many finite index subgroups $\tau^{\lambda}(\Gamma(\mathfrak{q}))$. The proof follows.
\end{proof}

\section{Cyclic spike automorphisms}\label{sect:spike}

\noindent For the sake of clarity it is convenient to alter some 
of the previous notation. Recall \cite[Theorem 5.1]{MSstabilizer} that a vertex $\tilde{v} \in \vert(G\backslash \TTT)$ is \it isolated \rm if and only if $\delta =1$ and $G_v \cong GL_2(\FF_q)\; \mathrm{or}\;\FF_{q^2}^*$. We assume throughout this section that $\delta=1$. We define the \it non-cyclic \rm vertices
$$\mathcal{V}_{nc}=\{ \tilde{v}\in \vert(G\backslash\TTT): G_v \cong GL_2(\FF_q) \}$$ 
and the \it cyclic \rm vertices
$$\mathcal{V}_c=\{ \tilde{v}\in \vert(G\backslash\TTT): G_v \cong \FF_{q^2}^* \}.$$ 
\noindent It is known \cite[Remark 3) p.97]{Serre}
that $\mathcal{V}_{nc} \neq \emptyset$ (for \it any \rm $\delta$). On the other hand  the example of Nagao's theorem \cite[1.6, p.85]{Serre} shows that, for the case $A=\FF_q[t]$, $\mathcal{V}_c = \emptyset$.
\noindent By \cite[Lemma 4.2(ii), Theorem 3.13]{MSquasi}  we have the following one-one correspondences$$\Ell(G)^= \longleftrightarrow \Cl(A)_2 \longleftrightarrow \mathcal{V}_{nc}.$$ 
\noindent In addition a consequence of \cite[Theorem 4.8]{MSMZ}
is the following.
$$\overline{\Ell}(G)^{\neq}=\{\{G\omega,G\overline{\omega}\}:G\omega \neq G\overline{\omega}\} \longleftrightarrow \mathcal{V}_c \longleftrightarrow (\mathcal{C}_c)^G.$$
\noindent Given that $|\overline{\Ell}(G)^{\neq}|=\frac{1}{2}|\Ell(G)^{\neq}|$ we have from a previous equation
$$|\Ell(G)|=L_K(-1)=|\Cl(A)_2|+2|\mathcal{V}_c|.$$

\begin{Def} We refer to  an isolated vertex in a graph together with its incident edge as a \it spike\rm.
\end{Def}

\begin{Lem}\label{elementwise} 
Let
$$\mathcal{A}=\mathrm{Aut}(\FF_{q^2}^* / \FF_q^*)=\{ \sigma \in \mathrm{Aut}(\FF_{q^2}^*): \sigma \;fixes\; \FF_q^*\; elementwise \}.$$

\noindent Then
$$|\mathcal{A}|=\left\{\begin{array}{rcl} 2\varphi(q+1) & \mbox{for} & q,\; odd\\ \varphi(q+1) & \mbox{for} &  q,\;even \end{array} \right .$$
\end{Lem}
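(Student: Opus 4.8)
The plan is to translate the statement into a purely arithmetic count of units and then evaluate an Euler-$\varphi$ quotient. First I would use that $\FF_{q^2}^*$ is cyclic of order $q^2-1$, with $\FF_q^*$ its unique subgroup of order $q-1$. Fixing a generator $g$ of $\FF_{q^2}^*$, every automorphism $\sigma$ is the power map $\sigma(x)=x^k$ for a unique residue $k$ with $\gcd(k,q^2-1)=1$, which identifies $\mathrm{Aut}(\FF_{q^2}^*)$ with $(\ZZ/(q^2-1)\ZZ)^*$. Since $\FF_q^*$ is generated by an element of order $q-1$, the requirement that $\sigma$ fix $\FF_q^*$ elementwise, namely $x^k=x$ for all $x\in\FF_q^*$, is equivalent to $(q-1)\mid(k-1)$. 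Hence $\mathcal{A}$ corresponds to the subgroup $\{k\in(\ZZ/(q^2-1)\ZZ)^* : (q-1)\mid(k-1)\}$.

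Next I would recognise this subgroup as the kernel of the reduction homomorphism $(\ZZ/(q^2-1)\ZZ)^*\to(\ZZ/(q-1)\ZZ)^*$. This map is surjective by a standard prime-by-prime lifting argument, using that the units modulo $p^{a}$ surject onto the units modulo $p^{b}$ whenever $b\le a$. Consequently its kernel has order $\varphi(q^2-1)/\varphi(q-1)$, so that $|\mathcal{A}|=\varphi(q^2-1)/\varphi(q-1)$, and it only remains to evaluate this quotient. Finally I would compute $\varphi(q^2-1)=\varphi\big((q-1)(q+1)\big)$ via the multiplicativity formula $\varphi(ab)=\varphi(a)\varphi(b)\,d/\varphi(d)$ with $d=\gcd(a,b)$. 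Here $d=\gcd(q-1,q+1)=\gcd(q-1,2)$, which is $1$ for $q$ even and $2$ for $q$ odd. For $q$ even this gives $\varphi(q^2-1)=\varphi(q-1)\varphi(q+1)$ and hence $|\mathcal{A}|=\varphi(q+1)$; for $q$ odd the correction factor $d/\varphi(d)=2$ yields $\varphi(q^2-1)=2\varphi(q-1)\varphi(q+1)$ and $|\mathcal{A}|=2\varphi(q+1)$, exactly as claimed.

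The computation is otherwise routine, and the only point demanding care is the parity split. The \emph{main obstacle} is the odd case, where $q-1$ and $q+1$ fail to be coprime and share precisely the factor $2$: this is what forces the extra factor of $2$ in $|\mathcal{A}|$. It is cleanest to encode it through the $\gcd$-correction term in the $\varphi$ formula above, which is equivalent to tracking the (non-)splitting at the prime $2$ that one would otherwise meet when lifting units. All the remaining ingredients—cyclicity of $\FF_{q^2}^*$, the power-map description of its automorphisms, and surjectivity of reduction of units—are standard.
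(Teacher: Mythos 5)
Your proof is correct, and its first half coincides with the paper's: both identify $\mathrm{Aut}(\FF_{q^2}^*)$ with $(\ZZ/(q^2-1)\ZZ)^*$ via power maps and observe that fixing $\FF_q^*$ elementwise is exactly the congruence $k\equiv 1 \pmod{q-1}$. Where you genuinely diverge is in the counting mechanism. The paper counts solutions directly: for each reduced residue $a$ modulo $q+1$ it solves the simultaneous congruences $u\equiv a \pmod{q+1}$ and $u\equiv 1\pmod{q-1}$ by an extended Chinese Remainder Theorem, obtains uniqueness modulo $\mathrm{lcm}(q-1,q+1)$, and then counts how many residues modulo $q^2-1$ each solution lifts to (two when $q$ is odd, one when $q$ is even). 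You instead recognize $\mathcal{A}$ as the kernel of the surjective reduction map $(\ZZ/(q^2-1)\ZZ)^*\to(\ZZ/(q-1)\ZZ)^*$, so that $|\mathcal{A}|=\varphi(q^2-1)/\varphi(q-1)$ uniformly in $q$, and the parity split enters only at the final evaluation of $\varphi\bigl((q-1)(q+1)\bigr)$ through the identity $\varphi(ab)=\varphi(a)\varphi(b)\gcd(a,b)/\varphi(\gcd(a,b))$. The two arguments are cousins (your surjectivity claim is itself proved by CRT-type lifting), but your organization buys a cleaner intermediate statement: the closed form $\varphi(q^2-1)/\varphi(q-1)$ holds for all $q$ and makes the extra factor $2$ transparent as the contribution of the prime $2$ to $\gcd(q-1,q+1)$. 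The paper's direct count, by contrast, stays entirely elementary, invoking neither the surjectivity of unit-group reduction nor the gcd-corrected multiplicativity of $\varphi$. Both treatments handle the one delicate point --- the shared factor $2$ when $q$ is odd --- correctly.
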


\begin{proof} 
In the absence of a suitable reference we sketch the proof. If $\sigma \in \mathrm{Aut}(\FF_{q^2}^*)$ then $g: \lambda \rightarrow \lambda^a$, where $(a,q^2-1)=1$. Is clear that $\sigma \in \mathcal{A}$ if and only if $q-1$ divides $a-1$. Then $|\mathcal{A}|$ is the number of $u\;(\mod \;q^2-1)$, where (i) $( u,q+1)=1$ and (ii) $u\equiv 1\;(\mod \;q-1)$. 
\noindent Suppose that $q$ is odd. Then $(q+1,q-1)=2$. Let $a$ belong to a fixed complete reduced set of residues $(\mod\;q+1)$. By an extended version of the Chinese Remainder Theorem there exists $u$ such that (i) $u \equiv\:a\;(\mod\;q+1)$ and (ii) $u \equiv 1\;(\mod\; q-1)$ and $u$ is unique modulo $\frac{q^2-1}{2}$, the least common multiple of $q\pm 1$.
The $\varphi(q+1)$ choices extend to $2\varphi(q+1)$ modulo $q^2-1$.
When $q$ is even $(q+1,q-1)=1$ and this case follows using a similar argument.
\end{proof}

\noindent Let $r=|\mathcal{V}_c|$.

\begin{Lem}\label{amalgam2} 
Suppose that $\delta=1$ and that $r>0$. There exists subgroups $C_i$ of $G$ $(0 \leq i \leq r)$ such that $G$ is the following free amalgamated product
$$G=\Star _{\quad Z} C_i,$$
where \begin{itemize}
\item[(i)] $Z=\{\lambda I_2:\lambda \in \FF_q^*\}$ is the centre of $G$,
\item[(ii)] $C_i \cong \FF_{q^2}^*$ when $i >0$.
\end{itemize}
\end{Lem}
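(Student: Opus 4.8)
The plan is to realise this splitting through Bass--Serre theory applied to the action of $G=GL_2(A)$ on the Bruhat--Tits tree $\mathcal{T}$. Since $G$ acts on $\mathcal{T}$ without inversion, the fundamental theorem \cite[Theorem 13, p.55]{Serre} identifies $G$ with the fundamental group of the graph of groups carried by the quotient $G\backslash\mathcal{T}$, whose shape $G\backslash\mathcal{T}=Y\cup(\bigcup_i\mathcal{R}_i)$ is described in \cite[Theorem 9, p.106]{Serre}. By \cite[Theorem 5.1]{MSstabilizer} the cyclic vertices of $\mathcal{V}_c$ are exactly the isolated vertices $\tilde v$ with $G_v\cong\FF_{q^2}^*$; there are $r$ of them, and each, being isolated, is a terminal (valency-$1$) vertex of $G\backslash\mathcal{T}$, so together with its single incident edge it forms a spike. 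These cyclic spikes are disjoint from the cuspidal rays $\mathcal{R}_i$, whose vertices carry the infinite unipotent-containing stabilizers.

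First I would compute the edge group of a cyclic spike. Because $\delta=1$ the residue field at $\infty$ is $\FF_q$, so $\mathcal{T}$ is $(q+1)$-regular. Let $v_i$ be a lift of a cyclic vertex and $e_i$ the edge of the spike. Valency $1$ in the quotient means that $G_{v_i}\cong\FF_{q^2}^*$ acts transitively on the $q+1$ edges of $\mathcal{T}$ at $v_i$, so $[G_{v_i}:G_{e_i}]=q+1$ and $|G_{e_i}|=q-1$. In the cyclic group $G_{v_i}\cong\FF_{q^2}^*$ there is a unique subgroup of order $q-1$, namely the image of $\FF_q^*$, which is precisely the group $Z$ of scalar matrices (scalars lie in every vertex stabilizer). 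Hence the edge group of every cyclic spike is the centre $Z$.

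Next I would peel off the spikes. Set $C_i=G_{v_i}\cong\FF_{q^2}^*$ for the $r$ cyclic vertices $v_1,\dots,v_r$, and let $C_0$ be the fundamental group of the graph of groups obtained from $G\backslash\mathcal{T}$ by deleting these $r$ terminal vertices together with their incident edges; since $Z$ lies in every vertex stabilizer, $Z$ embeds in $C_0$. For any graph of groups with a pendant edge carrying group $G_e$ and terminal vertex group $G_v$, removing that spike exhibits the fundamental group as the amalgam $\pi_1(\mathrm{rest})*_{G_e}G_v$, independently of the topology of the remaining graph. Applying this simultaneously to all $r$ cyclic spikes, each with edge group $Z$, yields
$$G=C_0*_Z C_1*_Z\cdots*_Z C_r=\Star_{Z}C_i,$$
giving (i) and (ii).

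The point needing the most care is the identification of every spike edge group with the full centre $Z$ rather than some larger subgroup: this rests on the $(q+1)$-regularity of $\mathcal{T}$ forced by $\delta=1$, on the transitivity of the cyclic stabilizer on the incident edges forced by isolation, and on the fact that $\FF_{q^2}^*$ has a unique subgroup of index $q+1$. Once this is in place the amalgamation is automatically along $Z$, and the repeated peeling of pendant cyclic spikes is a formal consequence of Bass--Serre theory that does not interact with whatever loops, cuspidal rays, or non-cyclic isolated vertices remain inside the core factor $C_0$.
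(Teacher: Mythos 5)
Your proposal is correct, and its skeleton is the same as the paper's: identify $G$ with the fundamental group of the graph of groups on $G\backslash\TTT$ via \cite[Theorem 13, p.55]{Serre}, note that the $r$ cyclic vertices are terminal, show that the edge of each cyclic spike carries the group $Z$, and then peel the spikes off as factors amalgamated along $Z$. The one genuine difference lies in how the key fact $G_e=Z$ is obtained. The paper simply cites \cite[Theorem 4.1]{MSstabilizer}, whereas you derive it from first principles: since $\delta=1$ the tree $\TTT$ is $(q+1)$-regular, terminality of $\tilde v$ in the quotient forces $G_v\cong\FF_{q^2}^*$ to act transitively on the $q+1$ edges at a lift $v$ (the spike edge is not a loop, so $G$-equivalence of edges at $v$ reduces to $G_v$-equivalence), hence $|G_e|=\frac{q^2-1}{q+1}=q-1$; as $G_e\leq G_v$ and the cyclic group $G_v$ has a unique subgroup of order $q-1$, which must be $Z$ because the scalars act trivially on $\TTT$ and so lie in every vertex stabilizer, you get $G_e=Z$. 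This counting argument is a sound, self-contained replacement for the citation: the paper's route buys brevity and consistency with \cite{MSstabilizer}, while yours makes the lemma independent of that reference and exposes exactly where the hypothesis $\delta=1$ enters. Your explicit treatment of the final step (removing pendant edges one at a time, with all edge groups equal to the single central subgroup $Z$, so that the result is $\Star_Z C_i$ with $Z$ embedded in $C_0$ and in each $C_i$) fills in what the paper compresses into ``the proof follows,'' and is a standard consequence of Bass--Serre theory; there is no gap there.
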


\begin{proof} 
By the \it fundamental theorem of the theory of groups acting on trees \rm \cite[Theorem 13 p.55]{Serre} $G$ is the \it fundamental group of a graph of groups \rm \cite[p.42]{Serre} arising from a \it lift \rm of a maximal tree in $G \backslash \TTT$. Any such lift must contain isolated vertices which map onto those of $\mathcal{V}_{nm}$. Let $e$ be any edge of the lift which maps onto the edge (in $G \backslash \TTT$) incident with one of the vertices in $\mathcal{V}_c$. Then $G_e=Z\; (\cong \FF_q^*$) by \cite[Theorem 4.1]{MSstabilizer}. The proof follows.
\end{proof}

\noindent It is clear that $\{C_1, \cdots,C_{r}\}$ is a complete set of representatives for $\{(\mathcal{C}_c)^G\}$.\;For each $k>0$ it is known by the proof of \cite[Theorem 2.6]{MSstabilizer} that there exists $x_k \in GL_2(C_{\infty})$ such that
$$C_k=\left\{x_k \mathrm{diag}(\lambda,\sigma(\lambda))x_k^{-1}: \lambda \in \FF_{q^2}^*\right\}\cong \FF_{q^2}^*.$$ Since the fixed point set of the Frobenius map $\lambda\mapsto \lambda^q$
is precisely $\FF_q$ it is clear that $C_k$ contains $Z=\{\lambda I_2 :\lambda \in \FF_q^*\}$. The map
$$x_i \mathrm{diag}(\lambda,\sigma(\lambda))x_i^{-1} \mapsto x_j \mathrm{diag}(\sigma(\lambda),\lambda)x_j^{-1}$$ defines an isomorphism
$$\psi_{ij}: C_i \rightarrow C_j$$
which fixes $\FF_{q^2}^*$.
\noindent It is clear that for each $i,j$ there are $|\mathcal{A}|$  such isomorphisms. 
Clearly $\psi_{ji}=\psi_{ij}^{-1}$.

\begin{Thm}\label{autocyclic} 
With the above notation suppose that $ij>0$. There exists an element $\psi^{i\leftrightarrow j}(=\psi^{j\leftrightarrow i})\in \mathrm{Aut}(G)$ for which

$$\psi^{i\leftrightarrow j}(g)=\left\{\begin{array}{ccc}\psi_{ij}(g) & , & g \in C_i \\ \psi_{ji}(g) & , & g \in C_j \\g & , & g \in C_k \end{array} \right.$$
for all $k \neq i,j$. 
\end{Thm}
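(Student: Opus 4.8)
The plan is to exploit the amalgam decomposition $G=\Star_Z C_i$ furnished by Lemma \ref{amalgam2} together with the universal property of free products with amalgamation: to define a homomorphism out of $G$ it suffices to prescribe homomorphisms on each free factor $C_i$ that agree on the amalgamated subgroup $Z$. This is precisely the mechanism already used in Corollary \ref{autocusp}, where the normal form theorem guaranteed that locally defined maps glue together to give an automorphism of $G$.

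First I would assemble the local data dictated by the statement. On the factor $C_i$ I take $\psi_{ij}\colon C_i\to C_j\le G$, on $C_j$ I take $\psi_{ji}\colon C_j\to C_i\le G$, and on every remaining factor $C_k$ (with $k\neq i,j$, including the non-cyclic factor $C_0$) I take the identity. Each of these is a genuine homomorphism of the relevant factor into $G$, since $C_i,C_j\le G$.

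The crucial compatibility step is to check that all three prescriptions restrict to the \emph{identity} on the amalgamated centre $Z=\{\lambda I_2:\lambda\in\FF_q^*\}$. For the identity maps this is trivial. For $\psi_{ij}$ it follows from its explicit description: a scalar $\lambda I_2\in Z$ with $\lambda\in\FF_q^*$ equals $x_i\,\mathrm{diag}(\lambda,\sigma(\lambda))\,x_i^{-1}$, and since $\sigma(\lambda)=\lambda$ for $\lambda\in\FF_q^*$, the map $\psi_{ij}$ sends it to $x_j\,\mathrm{diag}(\sigma(\lambda),\lambda)\,x_j^{-1}=\lambda I_2$; the identical computation applies to $\psi_{ji}$. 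Hence the three families of homomorphisms all agree, as the identity, on $Z$, and the universal property produces a well-defined endomorphism $\psi^{i\leftrightarrow j}$ of $G$ taking the stated values.

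Finally I would verify that $\psi^{i\leftrightarrow j}$ is an automorphism by showing it is an involution. Applying it twice carries $C_i$ to $C_j$ and back via $\psi_{ji}\circ\psi_{ij}$, which equals $\mathrm{id}_{C_i}$ because $\psi_{ji}=\psi_{ij}^{-1}$; symmetrically on $C_j$; and it is the identity on every remaining factor. Thus $(\psi^{i\leftrightarrow j})^2$ agrees with $\mathrm{id}_G$ on each free factor, so $(\psi^{i\leftrightarrow j})^2=\mathrm{id}_G$ and $\psi^{i\leftrightarrow j}$ is its own inverse; the symmetry $\psi^{i\leftrightarrow j}=\psi^{j\leftrightarrow i}$ is built into the construction. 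I expect the only point genuinely requiring care to be the verification on $Z$ — everything else being a formal consequence of the amalgam structure — so this should be the main, and only mild, obstacle.
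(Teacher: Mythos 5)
Your proposal is correct and takes essentially the same route as the paper, whose proof is the single line ``Follows from Lemma \ref{amalgam2} by the normal form theorem'': you simply make explicit the details that line leaves implicit, namely that each factor map restricts to the identity on the amalgamated centre $Z$ (because the Frobenius $\sigma$ fixes $\FF_q^*$) and that the glued map is an automorphism since it is an involution.
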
 

\begin{proof} 
Follows from Lemma \ref{amalgam2} by the normal form theorem. 
\end{proof}

\begin{Def} 
For the case $i=j (\neq 0)$ the same argument shows that there exists $\psi^i \in \mathrm{Aut}(G)$ which restricts to an automorphism of $C_i$ and fixes all other $C_k$. 
\\ 
\noindent Let $\mathrm{CS}(G)$ be the subgroup of $\mathrm{Aut}(G)$ generated by all automorphisms
$\psi^{i\leftrightarrow j}$ and $\psi^i$.
\end{Def}

\noindent The identification of (i) each $\psi^i$ with the identity in $S_r$ and (ii) each $\psi^{i \leftrightarrow j}$ with the transposition $(i,j)$ extends to a natural map $\pi: \mathrm{CS}(G) \rightarrow S_t$ which in turn leads to the following description of $\mathrm{CS}(G)$.

\begin{Cor}\label{Wreath} 
With the above notation suppose that $r=\frac{1}{2}(L_K(-1)-|\Cl(A)_2|)>0$. Then $\mathrm{CS}(G)$ is isomorphic to the wreath product  
$$R\wr S_r,$$
where $R 
\cong \mathcal{A}$. Consequently
$$|\mathrm{CS}(G)|=r!|\mathcal{A}|^r.$$
\end{Cor}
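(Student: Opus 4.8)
The plan is to realise $\mathrm{CS}(G)$ as an internal semidirect product of a base subgroup $N\cong\mathcal{A}^r$ by a complement isomorphic to $S_r$ that permutes the coordinates of $N$; this is exactly the wreath product $R\wr S_r$ with $R\cong\mathcal{A}$. Throughout I use that, by Lemma \ref{amalgam2}, $G=\Star_{Z}C_i$ is generated by its factors $C_0,C_1,\dots,C_r$, so an automorphism of $G$ is determined by its restrictions to these factors, and that every generator of $\mathrm{CS}(G)$ fixes $C_0$ pointwise and maps the set $\{C_1,\dots,C_r\}$ to itself. Write $\psi^i_\alpha$ for the automorphism $\psi^i$ of the Definition after Theorem \ref{autocyclic} that restricts to a prescribed $\alpha\in\mathcal{A}$ on $C_i$ and fixes every other factor pointwise, and take the $\psi^i$ to range over all of $\mathcal{A}$. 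Since the two displayed properties pass to composites, assigning to $\psi\in\mathrm{CS}(G)$ the permutation it induces on $\{C_1,\dots,C_r\}$ gives a well-defined homomorphism $\pi\colon\mathrm{CS}(G)\to S_r$, and $\pi(\psi^{i\leftrightarrow j})=(i,j)$ shows $\pi$ is onto. For the base group, $\psi^i_\alpha$ and $\psi^j_\beta$ commute for $i\neq j$ (disjoint supports) and $\psi^i_\alpha\psi^i_\beta=\psi^i_{\alpha\beta}$ by uniqueness of an automorphism with given restrictions, so $(\alpha_1,\dots,\alpha_r)\mapsto\prod_i\psi^i_{\alpha_i}$ is a homomorphism $\mathcal{A}^r\to\mathrm{Aut}(G)$; restricting to $C_j$ recovers $\alpha_j$, so it is injective and its image $N:=\langle\psi^i_\alpha\rangle$ satisfies $N\cong\mathcal{A}^r$ and $N\le\ker\pi$.

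Next I would construct the complement. Using $\psi_{ji}=\psi_{ij}^{-1}$, one checks directly on each factor that each $\psi^{i\leftrightarrow j}$ is an involution, that $\psi^{i\leftrightarrow j}$ and $\psi^{k\leftrightarrow l}$ commute for disjoint index pairs, and that $\psi^{i\leftrightarrow j}\psi^{j\leftrightarrow k}\psi^{i\leftrightarrow j}=\psi^{j\leftrightarrow k}\psi^{i\leftrightarrow j}\psi^{j\leftrightarrow k}$; indeed both sides restrict on $C_i$ to $\psi_{jk}\psi_{ij}$, on $C_k$ to $\psi_{ji}\psi_{kj}$, and on $C_j$ to the identity. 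Thus the $\psi^{i\leftrightarrow j}$ satisfy the Coxeter relations of $S_r$, so $(i,j)\mapsto\psi^{i\leftrightarrow j}$ extends to a homomorphism $\iota\colon S_r\to\mathrm{CS}(G)$; since $\pi\circ\iota=\mathrm{id}_{S_r}$, $\iota$ is a splitting and $\iota(S_r)\cap\ker\pi=1$.

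Finally I would assemble the pieces. A short computation on factors gives the conjugation rule $\psi^{i\leftrightarrow j}\,\psi^i_\alpha\,\psi^{i\leftrightarrow j}=\psi^j_\alpha$: on $C_j$ the left-hand side acts as $\psi_{ij}\circ\alpha\circ\psi_{ji}$, which under the identification $C_i\cong\FF_{q^2}^*\cong C_j$ becomes $\sigma\alpha\sigma^{-1}=\alpha$ since $\psi_{ij},\psi_{ji}$ are the Frobenius $\sigma$ and $\mathcal{A}$ is abelian with $\sigma\in\mathcal{A}$, while on the other factors both sides agree. Hence $\iota(S_r)$ normalises $N$ and acts on $N\cong\mathcal{A}^r$ precisely by permuting coordinates. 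As the generators of $\mathrm{CS}(G)$ all lie in $N\cup\iota(S_r)$ and $\iota(S_r)$ normalises $N$, the set $N\cdot\iota(S_r)$ is a subgroup containing them, so $\mathrm{CS}(G)=N\cdot\iota(S_r)$; together with $N\cap\iota(S_r)=1$ this gives $\mathrm{CS}(G)=N\rtimes\iota(S_r)\cong\mathcal{A}^r\rtimes S_r=\mathcal{A}\wr S_r$ with $R\cong\mathcal{A}$, whence $\ker\pi=N$ and $|\mathrm{CS}(G)|=|N|\,r!=|\mathcal{A}|^r r!$.

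The part I expect to be most delicate is not any single computation but ensuring that no cocycle twist survives. A priori, conjugating $\psi^i_\alpha$ by $\psi^{i\leftrightarrow j}$ could yield $\psi^j_{\alpha'}$ with $\alpha'$ a twist of $\alpha$, and the $\psi^{i\leftrightarrow j}$ could fail the Coxeter relations by elements of $N$; either defect would leave the order correct but possibly change the extension class away from the wreath product. The two explicit factor-by-factor checks above are exactly what exclude this, the crucial inputs being $\psi_{ji}=\psi_{ij}^{-1}$ and the fact that $\mathcal{A}$ is abelian and contains the Frobenius $\sigma$.
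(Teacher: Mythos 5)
Your overall strategy---realising $\mathrm{CS}(G)$ as $N\rtimes S_r$ with $N\cong\mathcal{A}^r$ the subgroup of elements acting factor-wise, and $S_r$ permuting the coordinates---is the right one, and your treatment of the base group $N$ as well as your conjugation computation $\psi^{i\leftrightarrow j}\psi^i_\alpha\psi^{i\leftrightarrow j}=\psi^j_\alpha$ (using that $\mathrm{Aut}(\FF_{q^2}^*)$ is abelian, so the Frobenius twist hidden in $\psi_{ij}$ cancels) are correct and make explicit a point the paper's very short proof passes over. However, your construction of the complement contains a genuine error. You verify that the $\psi^{i\leftrightarrow j}$ are involutions, commute for disjoint index pairs, and satisfy the braid relation $sts=tst$, and from this you conclude that $(i,j)\mapsto\psi^{i\leftrightarrow j}$ extends to a homomorphism $\iota\colon S_r\to\mathrm{CS}(G)$. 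That inference is invalid: these relations are \emph{not} a presentation of $S_r$ on the generating set of all transpositions. The Coxeter group on $\binom{r}{2}$ involutions in which generators sharing an index braid and disjoint ones commute is infinite for $r\geq 3$ (for $r=3$ it is the affine group $\widetilde{A}_2$); a presentation on all transpositions needs in addition the relations $t_{ij}t_{jk}t_{ij}=t_{ik}$, and these are exactly the ones that fail here. Writing $\theta_i(\lambda)=x_i\mathrm{diag}(\lambda,\sigma(\lambda))x_i^{-1}$, the paper's definition gives $\psi_{ij}=\theta_j\sigma\theta_i^{-1}$, hence $\psi_{jk}\psi_{ij}=\theta_k\sigma^2\theta_i^{-1}=\theta_k\theta_i^{-1}$, while $\psi_{ik}=\theta_k\sigma\theta_i^{-1}$. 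Consequently $\psi^{i\leftrightarrow j}\psi^{j\leftrightarrow k}\psi^{i\leftrightarrow j}$ differs from $\psi^{i\leftrightarrow k}$ by the nontrivial element $\psi^i_\sigma\psi^k_\sigma\in N$, so your map on all transpositions is simply not a homomorphism, and with it the assertions $\pi\circ\iota=\mathrm{id}_{S_r}$ and ``the generators of $\mathrm{CS}(G)$ all lie in $N\cup\iota(S_r)$'' collapse: a non-adjacent $\psi^{i\leftrightarrow j}$ lies in $N\iota(S_r)$ but not in $\iota(S_r)$.

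The proof is repairable with the pieces you already have, for $r\geq 3$ as well. Either define $\iota$ only on the adjacent transpositions, $s_i\mapsto\psi^{i\leftrightarrow i+1}$; your three verified relations, restricted to these generators, are precisely the Coxeter presentation of $S_r$, so $\iota$ exists and splits $\pi$. Or define $\iota(\tau)$ directly as the automorphism acting by the untwisted identification $\theta_{\tau(i)}\theta_i^{-1}$ on each $C_i$ and trivially on $C_0$; these compose correctly in $\tau$, and each $\iota(\tau)$ lies in $\mathrm{CS}(G)$ because it differs from a product of the given generators by an element of $N$. In either case you must then replace the generator-membership step by the observation that $\ker\pi=N$: every element of $\mathrm{CS}(G)$ fixes $C_0$ pointwise and maps each factor onto a factor by an isomorphism fixing $Z$ elementwise (both properties hold for the generators and pass to composites and inverses), so any $\phi$ with $\pi(\phi)=1$ restricts on each $C_i$ to some $\alpha_i\in\mathcal{A}$ and hence equals $\prod_i\psi^i_{\alpha_i}\in N$ by uniqueness of an automorphism with prescribed restrictions. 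Then $\mathrm{CS}(G)=\ker\pi\cdot\iota(S_r)=N\rtimes\iota(S_r)$, and your coordinate-permutation computation for the action of $\iota(S_r)$ on $N$ finishes the identification with $\mathcal{A}\wr S_r$ and the order count $r!\,|\mathcal{A}|^r$.
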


\begin{proof} 
Clearly $\pi$ is surjective. Let $V_i=\{ \psi^i\}$, where $1\leq i \leq r$. Then $V_i \cong \mathcal{A}$. From the above $\mathrm{CS}(G)$ acts on the elements of set $\{V_1, \cdots V_r\}$ in the process permuting them (fully) in accordance with $S_r$. The result follows. The order of $\mathrm{CS}(G)$ is then given by Lemma \ref{elementwise}.
\end{proof}

\begin{Rk} 
By definition $\mathrm{CS}(G)$ acts on the isolated vertices of $G\backslash \TTT$ of cyclic type while fixing all other vertices. If a  pair of such vertices are attached to $G\backslash \TTT$ at a single vertex the corresponding  automorphism extends naturally to an automorphism of $G \backslash \TTT$. This restriction is necessary.
\end{Rk}

\begin{Prop}\label{notgraph}
Suppose that $\delta=1$, $q\geq 8$  and $\frac{3^g}4 > q+1$ (so in particular $g>3$).  
Then $\mathrm{CS}(G)$ contains an automorphism that does {\bf not} induce an automorphism 
of the quotient graph $G\setminus\TTT$.
\end{Prop}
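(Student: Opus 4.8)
The plan is to combine the amalgam decomposition $G=\Star_Z C_i$ of Lemma~\ref{amalgam2} with a counting estimate forced by the hypotheses, reducing everything to the statement that the $r$ cyclic spikes cannot all be attached to $G\setminus\TTT$ at a single vertex. Granting this, I would pick two cyclic spikes sitting at distinct vertices and show that the associated transposition automorphism $\psi^{i\leftrightarrow j}$ of Theorem~\ref{autocyclic} fails to act on the quotient graph.

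For the reduction, note that each cyclic vertex $\tilde v_i\in\mathcal{V}_c$ is a leaf of $G\setminus\TTT$ joined by its single spike edge to an attachment vertex $w_i$ lying in the finite core; as $w_i$ is not itself a cyclic leaf, its stabilizer belongs to the factor $C_0$. By construction $\psi^{i\leftrightarrow j}$ is the identity on every $C_k$ with $k\neq i,j$, in particular on $C_0$, so it fixes the groups of $w_i$ and $w_j$; were it to induce a graph automorphism, that automorphism would therefore fix $w_i$ and $w_j$. But any graph automorphism interchanging $\tilde v_i$ and $\tilde v_j$ must carry the unique edge at $\tilde v_i$ to the unique edge at $\tilde v_j$, forcing $w_i\mapsto w_j$. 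If $w_i\neq w_j$ these requirements are incompatible, so $\psi^{i\leftrightarrow j}$ does not induce an automorphism of $G\setminus\TTT$. Hence it suffices to produce two cyclic spikes with $w_i\neq w_j$.

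The counting is then elementary: since $\delta=1$ the tree $\TTT$ is $(q+1)$-regular, so each vertex of $G\setminus\TTT$ has at most $q+1$ incident edges and therefore carries at most $q+1$ cyclic spikes. Consequently, if $r=|\mathcal{V}_c|>q+1$ the spikes cannot all share one attachment vertex, and two of them lie at distinct vertices, which is exactly what the reduction needs.

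It remains to deduce $r>q+1$. Because $\delta=1$ one has $L_K(-1)=\prod_{i=1}^{2g}(1+\alpha_i)$ with $|\alpha_i|=\sqrt q$; pairing each root with its conjugate gives $L_K(-1)\ge(\sqrt q-1)^{2g}$, and $q\ge 8$ is precisely the threshold making $(\sqrt q-1)^2\ge 3$, whence $L_K(-1)\ge 3^g$. Feeding this together with the hypothesis $\tfrac{3^g}{4}>q+1$ into the identity $2r=L_K(-1)-|\Cl(A)_2|$ yields $r>q+1$, provided the subtracted $2$-torsion term does not overwhelm the estimate, e.g. provided $|\Cl(A)_2|\le\tfrac12 L_K(-1)$ (which holds in characteristic $2$, where $|\Cl(A)_2|\le 2^g$). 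Bounding $|\Cl(A)_2|$ against $L_K(-1)$ so that the constant $\tfrac14$ and the threshold $q\ge 8$ suffice is the delicate point, and is where I expect the main work of the proof to lie.
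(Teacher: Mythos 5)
Your geometric reduction is exactly the paper's argument: the paper likewise observes that every vertex of $\TTT$, hence every vertex of $G\setminus\TTT$, has valency at most $q+1$, so once $r>q+1$ two cyclic spikes must be attached at distinct vertices, and the corresponding swap $\psi^{i\leftrightarrow j}$ (which fixes all other vertices, being the identity on $C_0$) cannot respect adjacency. The divergence, and the genuine gap, is in how you get $r>q+1$. The paper does not re-derive this bound at all: it cites \cite[Theorem 5.6(b)]{MSMZ}, which asserts that $r>\frac{3^g}{4}$ whenever $q\geq 8$ and $g>3$, and then the hypothesis $\frac{3^g}{4}>q+1$ finishes the proof in one line.

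Your attempted derivation of that bound closes only in characteristic $2$, as you yourself flag. In odd characteristic the $2$-torsion estimate you need fails: one only has $|\Cl(A)_2|=|J(\FF_q)[2]|\leq 2^{2g}=4^g$ there (the bound $2^g$ is special to characteristic $2$, where the $p$-rank of the Jacobian is at most $g$). The case $q=9$ is fatal for your inequality chain: it satisfies $q\geq 8$, has odd characteristic, and your Weil-type estimate gives only $L_K(-1)\geq(\sqrt{q}-1)^{2g}=4^g$, which is exactly cancelled by the worst-case $2$-torsion, so $2r=L_K(-1)-|\Cl(A)_2|$ acquires no positive lower bound whatsoever. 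Ruling out this extremal configuration is genuinely nontrivial: equality $L_K(-1)=4^g$ forces all Frobenius eigenvalues to equal $-3$, i.e.\ a maximal curve over $\FF_9$, and then Frobenius acts trivially on $J[2]$, so $|\Cl(A)_2|=4^g$ and $r=0$; excluding this for $g>3$ needs arithmetic input such as the Ihara/R\"uck--Stichtenoth genus bound for maximal curves, which your elementary estimates do not provide. So the missing piece is precisely the content of the cited theorem; with that citation substituted for your final paragraph, your proof coincides with the paper's.
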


\begin{proof} 
As above let $r$ denote the number of isolated vertices of cyclic type in $G \backslash \TTT$. Then, for 
$q \geq 8$ and $g>3$,
it is known \cite[Theorem 5.6(b)]{MSMZ} that $r > \frac{3^g}4$. Now the valency of every vertex of $\TTT$ is $q+1$. The condition ensures then that there are at least two such isolated vertices which are attached to the rest of $G \backslash \TTT$ by different vertices. From the above the corresponding cyclic spike automorphism has the required property.
\end{proof}

The condition that $K$ has a place of degree $1$ (to take as infinite place) is not automatic. But it also is
not an overly restrictive condition. For every $q$ and $g$ there are function fields with such a place.
\par
The following is inspired by the main result of \cite{Fouxe}. See Theorem \ref{generators} below for more 
context.

\begin{Lem}\label{partialconjugate}
Let $G$ be an amalgamated product
$$G=\Star_Z H_i$$
with $0\leq i \leq m$. 
Pick distinct $i$ and $j$ from $\{0,\ldots,m\}$ and $g_i \in H_i$. 
Then the {\it partial conjugation} $\alpha_{ij}$ defined by
$$\alpha_{ij}(h):=\left\{\begin{array}{ccl} g_ihg_i^{-1} & \hbox{\it if} & h\in H_j, \\ h & \hbox{\it if} & h\in H_k\ \hbox{\it with}\ k\neq j,\end{array}\right.$$
is an automorphism of $G$.
\end{Lem}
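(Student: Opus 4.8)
The plan is to build $\alpha_{ij}$ from its restrictions to the free factors by invoking the universal property of the amalgamated product $G=\Star_Z H_i$: a homomorphism out of $G$ is the same datum as a family of homomorphisms $\phi_k\colon H_k\to G$ whose restrictions to the common amalgamated subgroup $Z$ all coincide. First I would set $\phi_j\colon H_j\to G$, $h\mapsto g_ihg_i^{-1}$ (the restriction to $H_j$ of conjugation by $g_i$ in $G$, which is certainly a homomorphism), and $\phi_k=\id_{H_k}$ for every $k\neq j$.

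The crucial point, and the step I expect to carry the whole argument, is the compatibility of these $\phi_k$ on $Z$. Here I would use that $Z$ is central in $G$: for $z\in Z$ we have $g_izg_i^{-1}=z$, so $\phi_j|_Z=\id_Z=\phi_k|_Z$ for all $k\neq j$. Thus the $\phi_k$ agree on $Z$, and the universal property yields a well-defined endomorphism $\alpha_{ij}$ of $G$ with the stated effect on each factor. Without centrality of the amalgamated subgroup the two prescriptions would disagree on $Z$ and no such map would exist; this is exactly where the hypothesis that $Z$ is the centre enters, in the spirit of \cite{Fouxe}.

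It then remains to show $\alpha_{ij}$ is bijective, which I would do by exhibiting an explicit inverse rather than by a normal-form argument. Let $\beta_{ij}$ be the partial conjugation defined in the same way but with $g_i^{-1}$ in place of $g_i$ on $H_j$ (and the identity elsewhere); by the identical compatibility check $\beta_{ij}$ is a well-defined endomorphism of $G$. To verify $\alpha_{ij}\circ\beta_{ij}=\id$ it suffices to check equality on each factor $H_k$. On $H_k$ with $k\neq j$ both maps are the identity, so there is nothing to do. On $H_j$, for $h\in H_j$ I would compute, using that $\alpha_{ij}$ is a homomorphism,
$$\alpha_{ij}(\beta_{ij}(h))=\alpha_{ij}(g_i^{-1}hg_i)=\alpha_{ij}(g_i)^{-1}\,\alpha_{ij}(h)\,\alpha_{ij}(g_i).$$
Since $i\neq j$ and $g_i\in H_i$, the map $\alpha_{ij}$ fixes $g_i$, so the right-hand side equals $g_i^{-1}(g_ihg_i^{-1})g_i=h$. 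The reverse composition $\beta_{ij}\circ\alpha_{ij}=\id$ is verified identically. Hence $\alpha_{ij}$ is an automorphism. The only delicate hypothesis here is the distinctness $i\neq j$: it guarantees that $g_i$ lies in a factor on which $\alpha_{ij}$ acts trivially and is therefore preserved, which is precisely what makes the two partial conjugations mutually inverse.
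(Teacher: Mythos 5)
Your proof is correct and follows essentially the same route as the paper's (much terser) argument: centrality of $Z$ ensures the prescriptions on the factors agree on the amalgamated subgroup, hence give a well-defined endomorphism, and the inverse is the partial conjugation by $g_i^{-1}$. You have merely made explicit the universal property and the verification that the two compositions are the identity (including the key observation that $\alpha_{ij}$ fixes $g_i$), which the paper leaves to the reader.
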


\begin{proof}
Since the conjugation fixes the elements from the centre, one easily sees that $\alpha_{ij}$ defines 
a group endomorphism. Its inverse is the partial conjugation with $g_i^{-1}$.
\end{proof}

\begin{Def}\label{cyclicspikeauto}
Assume that the number $r$ of cyclic spikes is positive. Let $\overline{\mathrm{CS(G)}}$ be the infinite overgroup of $\mathrm{CS(G)}$
generated by the wreath product $\mathrm{CS}(G)$ together with all partial conjugations  (see Lemma \ref{partialconjugate}) 
of the amalgamated product
$$G=\Star_Z H_i$$
from Lemma \ref{amalgam2}.
\end{Def}

\noindent In the next section we will see some concrete examples of this as well as some very subtle points 
one has to be extremely careful with.

\section{Some applications}\label{sect:applications}

There is only one case for which a set of generators of $\mathrm{Aut}(G)$ is known, namely $A=\FF_q[t]$ for any finite field $\FF_q$ (see \cite[Theorem, p.465]{Reiner}). 
\par
\noindent Before pushing a little bit further in this direction we state some results that we will need.

\begin{Lem}\label{notfree} 
 
(a) $GL_2(\FF_q[t]) = GL_2(\FF_q)\star_{B_2(\FF_q)}B_2(\FF_q[t])$ (Nagao's Theorem) 
\begin{center} (b)  $GL_2(\FF_q[t])$ does not decompose as a non-trivial free product.
\end{center}
\end{Lem}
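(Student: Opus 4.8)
The plan is to prove the two assertions of Lemma \ref{notfree} separately, with part (a) being the already-established Nagao decomposition that we may simply cite, and part (b) being the genuine content. For part (a) I would recall that Nagao's theorem (cited in the excerpt via \cite[Theorem 6, p.86]{Serre}) gives exactly $GL_2(\FF_q[t]) = GL_2(\FF_q)\star_{B_2(\FF_q)}B_2(\FF_q[t])$, and note that this is the special case $A=\FF_q[t]$ of Theorem \ref{amalgam1}, so nothing new needs to be done.

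For part (b), the natural strategy is to argue by contradiction, assuming $G=GL_2(\FF_q[t])$ splits as a nontrivial free product $G = P \ast Q$ and deriving an incompatibility with the structural facts we already have. The key observation I would exploit is that $G$ contains the central subgroup $Z=\{\lambda I_2:\lambda\in\FF_q^*\}$, which is a nontrivial finite \emph{normal} subgroup. In a nontrivial free product $P\ast Q$ (with not both factors trivial and the product not infinite cyclic) the only normal subgroups that can be finite are trivial: by the Kurosh subgroup theorem every finite subgroup is conjugate into one of the factors, and a normal finite subgroup would then have to lie in the intersection of all conjugates of the factors, which is trivial in a nontrivial free product. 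Since $Z\neq 1$ is finite, central, and hence normal, this is the desired contradiction. Concretely, the steps are: first invoke that $Z\leq G$ is nontrivial (as $\FF_q^*\neq 1$) and normal; second, invoke that in a free product $P\ast Q$ any finite normal subgroup is trivial; third, conclude that no nontrivial free product decomposition of $G$ can exist.

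The step I expect to require the most care is handling the degenerate cases of the free-product statement, namely ensuring that ``nontrivial free product'' is interpreted so as to exclude $G\cong G\ast 1$ and the infinite-cyclic case $\ZZ\cong\ZZ_\ast$--type pathologies; I would phrase the standard fact precisely as: if $G=P\ast Q$ with $P,Q\neq 1$, then $G$ has no nontrivial finite normal subgroup. Alternatively, and perhaps more in keeping with the Bass--Serre framework used throughout the paper, I would give the argument via trees: a nontrivial free product corresponds to an action of $G$ on a tree with trivial edge stabilizers and without a global fixed point, whereas the finite normal subgroup $Z$ must fix a vertex, and normality would force its fixed-point set to be $G$-invariant, again yielding a contradiction with minimality. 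Either route is short; the only real obstacle is stating the excluded trivial cases cleanly so that the clause ``does not decompose as a \emph{non-trivial} free product'' is unambiguous.
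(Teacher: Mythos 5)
Your proof of part (b) has a genuine gap: it only covers $q>2$. The whole strategy rests on the claim that $Z=\{\lambda I_2:\lambda\in\FF_q^*\}$ is a \emph{non-trivial} finite normal subgroup, but for $q=2$ we have $\FF_2^*=\{1\}$, so $Z$ is trivial and both of your arguments (the Kurosh/finite-normal-subgroup one and the Bass--Serre fixed-point one) say nothing at all. The lemma is stated for every finite field $\FF_q$, and $q=2$ is not a removable edge case here: it is precisely the case the paper needs, since Lemma \ref{notfree}(b) is invoked to guarantee that the factor $H\cong GL_2(\FF_2[t])$ appearing in the free product decompositions of Examples \ref{Ex3cusps} and \ref{Ex1cusp} cannot be split further, which is a hypothesis of Theorem \ref{generators}; it also underlies Proposition \ref{Reinerq=2}. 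So as written, your proof establishes the statement exactly in the cases where it is not needed.

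In fact the paper's proof uses your centre observation, but only as a first reduction: since a non-trivial free product has trivial centre, one may assume $q=2$, and then a different argument is required. Namely, when $q=2$ the group $B_2(\FF_2[t])$ coincides with the unipotent group $U(\FF_2[t])$ and is an abelian torsion group, so by the Kurosh subgroup theorem it lies in a conjugate of one of the factors, say of $H_1$. Let $N$ be the normal closure of $H_1$ in $G$; then $G/N\cong H_2$. But $N$ contains $B_2(\FF_2[t])$, hence the element ${1\ 1\choose 0\ 1}$ and all of its $G$-conjugates; since the involutions of $GL_2(\FF_2)\cong S_3$ are conjugate and generate it, $N\supseteq GL_2(\FF_2)$, and then Nagao's theorem (part (a)) forces $N=G$, contradicting $H_2\neq 1$. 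To repair your proof you would need to add an argument of this kind for $q=2$; the centre alone cannot close that case.
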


\begin{proof} For (a) see \cite[Corollary, p.85]{Serre}.
\par
\noindent For (b) assume to the contrary that  $G=GL_2(\FF_q[t])=H_1\star H_2$, where $H_1$ and $H_2$ are non-trivial.  Now any free product must have trivial centre and so we may assume further that $q=2$. In this case $B_2(\FF_2[t])$ is an abelian torsion group and so by the Kurosh Subgroup Theorem is contained in a conjugate of one of the factors, say $H_1$. Let $N$ denote the normal subgroup of $G$ generated by $H_1$. Then $G/N \cong H_2$. But $N$ contains $GL_2(\FF_2)$ and hence $G$. The proof follows.
\end{proof}

\noindent If $q>2$, then in order to generate $\mathrm{Aut}(GL_2(\FF_q[t]))$ one needs further types of automorphisms that we have not discussed in this paper. See \cite[Theorem, p.465]{Reiner}.
But if $q=2$, this significantly simplifies matters because then $\FF_q^* =\{1\}$ and the identity matrix 
is the only diagonal matrix. We will make use of the following.

\begin{Prop}\label{Reinerq=2}
Let $q=2$, $g=0$ and $\delta=1$, i.e. $A=\FF_2[t]$ and $G=GL_2(\FF_2[t])$. Then every automorphism 
of $G$ is of the form $\sigma\tau$ where $\sigma$ is an inner automorphism and $\tau$ is a Reiner automorphism.
\end{Prop}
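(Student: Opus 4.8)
The plan is to invoke Reiner's theorem (\cite[Theorem, p.465]{Reiner}) directly and then translate its content into the language developed in this paper. Reiner classifies $\mathrm{Aut}(GL_2(\FF_q[t]))$, and the point of Proposition \ref{Reinerq=2} is that in the special case $q=2$ his generators collapse to just two families: inner automorphisms and what he calls \emph{non-standard} automorphisms. So the first step is to recall exactly which generators Reiner produces. For general $q$ his set includes inner automorphisms, the non-standard automorphisms, a central/determinant-type automorphism, and automorphisms arising from the diagonal torus $\FF_q^*\times\FF_q^*$. When $q=2$ we have $\FF_2^*=\{1\}$, so the identity matrix is the only diagonal matrix; this kills the torus-type and central-type contributions entirely, leaving only inner automorphisms and the non-standard ones.

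The second step is to identify Reiner's non-standard automorphisms (for $A=\FF_2[t]$) with the Reiner automorphisms $\rho_s$ defined in Section \ref{sect:reiner}. This is essentially a matter of unwinding definitions: by Remark (i) after Corollary \ref{autocusp} our construction of $\rho_s$ for general $A$ specializes, when $A=\FF_q[t]$, precisely to Reiner's original construction, and the case $s=\infty$ is the one Reiner works with (cf.\ the special case $s=\infty$ noted in Remark (ii), which is \cite[Theorem 2.4]{MSJPAA}). For $A=\FF_2[t]$ the relevant amalgam is Nagao's decomposition from Lemma \ref{notfree}(a), $G=GL_2(\FF_2)\star_{B_2(\FF_2)}B_2(\FF_2[t])$, and the Reiner automorphisms are exactly the maps that act as an $\FF_2$-linear automorphism on the unipotent part $U(\infty)\cong\FF_2[t]^+$ while fixing the finite factor $GL_2(\FF_2)$ elementwise, as in Corollary \ref{autocusp} and Lemma \ref{auto1}.

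The third step is to verify that these two families close up correctly, i.e.\ that \emph{every} element of $\mathrm{Aut}(G)$ is a product $\sigma\tau$ with $\sigma$ inner and $\tau$ Reiner, rather than needing a longer alternating word. Here one uses that Reiner's theorem already presents $\mathrm{Aut}(G)$ in terms of how an arbitrary automorphism acts: up to an inner automorphism, any automorphism can be normalized so that it fixes the finite factor $GL_2(\FF_2)$ (which is $J=B_2(\FF_2)$-conjugacy after absorbing an inner automorphism, since all such finite subgroups are conjugate), and what remains of its action is governed entirely by its effect on the unipotent subgroup, which is a Reiner automorphism. Concretely, given $\theta\in\mathrm{Aut}(G)$, by Theorem \ref{rational points} applied at $s=\infty$ (using that $GL_2(\FF_2)\leq G(\infty)$ up to conjugacy) one finds an inner automorphism $\sigma^{-1}$ so that $\sigma^{-1}\theta$ stabilizes $G(\infty)$ and fixes $GL_2(\FF_2)$; by Lemma \ref{finite} and Lemma \ref{auto1} its restriction to $G(\infty)=F\cdot U(\infty)$ is then exactly of Reiner type, giving $\theta=\sigma\tau$.

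The main obstacle I expect is the bookkeeping in the third step: ensuring that after conjugating by an inner automorphism the residual map is \emph{globally} a single Reiner automorphism $\rho_\infty$ and not merely Reiner-like on $G(\infty)$. This requires checking that an automorphism fixing $GL_2(\FF_2)$ elementwise and acting linearly on $U(\infty)$ is fully determined on all of $G$ by the amalgam structure (via the normal form theorem, as in Corollary \ref{autocusp}), so that no extra twist on the $B_2(\FF_2[t])$ factor survives. This is precisely where the $q=2$ hypothesis does the real work: the triviality of $\FF_2^*$ means the finite factor $B_2(\FF_2)$ coincides with $GL_2(\FF_2)\cap B_2(\FF_2[t])$ being forced to be fixed, leaving no room for the diagonal automorphisms that complicate the general $q$ case.
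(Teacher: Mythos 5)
Your first two steps (quote Reiner's classification and observe that for $q=2$ the torus-type and determinant-type generators disappear) reproduce the alternative argument that the paper itself sketches immediately after its proof, and that route can be completed; but the real content of the Proposition is the factorization into a \emph{single} product $\sigma\tau$, and your third step, which is supposed to deliver it, breaks down. The parenthetical claim that $GL_2(\FF_2)\leq G(\infty)$ up to conjugacy is false: for $q=2$ one has $G(\infty)=U(A)=B_2(\FF_2[t])$, an elementary abelian $2$-group, which cannot contain a copy of $GL_2(\FF_2)\cong S_3$. Consequently, normalizing $\theta$ via Theorem \ref{rational points} so that $\sigma^{-1}\theta$ stabilizes $G(\infty)$ gives no control whatsoever over $\sigma^{-1}\theta(GL_2(\FF_2))$, and an automorphism that merely stabilizes $G(\infty)$ and acts linearly on $U(A)$ is \emph{not} thereby a Reiner automorphism: by Corollary \ref{autocusp} a Reiner automorphism must in addition be the identity on the other Nagao factor $GL_2(\FF_2)$ and fix the amalgamated subgroup $B_2(\FF_2)$ pointwise. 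This is exactly the obstacle you flag in your closing paragraph, and the sentence you offer to dispose of it is not an argument.

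The paper closes this gap by normalizing in the opposite order, using two inputs absent from your proposal. First, by \cite[Corollary 3.14]{MSquasi} every subgroup of $G$ isomorphic to $GL_2(\FF_2)$ is conjugate in $G$ to the standard copy, so an inner automorphism $\sigma_1$ makes $\sigma_1\alpha$ map $GL_2(\FF_2)$ to itself; since every automorphism of $GL_2(\FF_2)\cong S_3$ is inner, a second inner automorphism $\sigma_2$, by an element of $GL_2(\FF_2)$, makes $\sigma_2\sigma_1\alpha$ the identity on $GL_2(\FF_2)$, in particular it fixes $M={1\ 1\choose 0\ 1}$. The stabilization of $U(A)$ is then automatic and needs no appeal to Theorem \ref{rational points}: $U(A)=C_G(M)$ is the centralizer of an element that is now fixed, hence is carried to itself. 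The restriction of $\sigma_2\sigma_1\alpha$ to $U(A)$ is an $\FF_2$-linear automorphism fixing $B_2(\FF_2)$, so it is undone by a Reiner automorphism $\tau_1$, and since $GL_2(\FF_2)$ and $U(A)$ generate $G$ (Lemma \ref{notfree}(a)), $\tau_1\sigma_2\sigma_1\alpha=\mathrm{id}$, giving $\alpha=\sigma\tau$. If instead you insist on using \cite{Reiner} as a black box, you must still (i) move the inner factors to one side using normality of $\mathrm{Inn}(G)$ and check that a product of Reiner automorphisms at $\infty$ is again one, and (ii) account for the $q=2$ generators that do \emph{not} vanish for the reason you give, e.g.\ the automorphism induced by the ring automorphism $t\mapsto t+1$ and the contragredient $X\mapsto (X^T)^{-1}$; these do reduce to your two families (the former is a Reiner automorphism, the latter is inner because $\det\equiv 1$ when $q=2$), but that needs saying.
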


\begin{proof}
Let $\alpha$ be an arbitrary automorphism of $G=GL_2(\FF_2[t])$.
\noindent To start with, by \cite[Corollary 3.14]{MSquasi} every subgroup of $G$ that is isomorphic to $GL_2(\FF_2)$
actually is conjugate in $G$ to the natural subgroup $GL_2(\FF_2)$ of $GL_2(A)$ coming from $\FF_2\subset A$.
So we can find an inner automorphism $\sigma_1$ of $G$ such that $\sigma_1\alpha$ restricted to 
$GL_2(\FF_2)$ is an automorphism of $GL_2(\FF_2)$. 
\par
Secondly, every automorphism of $GL_2(\FF_2)$ is inner. So there exists an element of $GL_2(\FF_2)$ that
gives an inner automorphism $\sigma_2$ of $G$ such that $\sigma_2\sigma_1\alpha$ restricted to 
$GL_2(\FF_2)$ is the identity. In particular, $\sigma_2\sigma_1\alpha$ fixes the matrix
$M={1\ 1\choose 0\ 1}$.
\par
Next, $U(A)=B_2(\FF_2[t])$ is the centralizer of $M$, and therefore $\sigma_2\sigma_1\alpha(U(A))$ is the centralizer of
$\sigma_2\sigma_1\alpha(M)=M$; so $\sigma_2\sigma_1\alpha(U(A))=U(A)$.  
\par
Since $U(A)$ is a direct 
sum of countably infinitely many copies of $\FF_2$ we can apply a Reiner automorphism $\tau_1$ to achieve that $\tau_1\sigma_2\sigma_1\alpha$ is the identity on $U(A))$ (and also still on $GL_2(\FF_2)$).
As $U(A)$ and $GL_2(\FF_2)$ generate $G$, this means that $\tau_1\sigma_2\sigma_1\alpha$ is the 
identity on $G$.
\end{proof}

\noindent Alternatively, we could argue that, as a special case of \cite[Theorem]{Reiner} for $q=2$, 
$\mathrm{Aut}(GL_2(\FF_2[t]))$ is generated by inner and Reiner automorphisms. (The other automorphisms
needed to generate $\mathrm{Aut}(GL_2(\FF_q[t]))$ for $q>2$ do not exist for $q=2$.) Since the inner 
automorphisms are normal in $\mathrm{Aut}(G)$ any product of both can be written in the above form.
\\ \\
\noindent
For the remaining examples we assume $g=\delta=1$, the \it elliptic \rm case. Here $A=\FF_q[x,y]$, where $x,y$ satisfy a \it Weierstrass equation. \rm Takahashi \cite{Takahashi} has determined the structure of $G \backslash \TTT$ for \it all \rm fields of constants. In all cases he has shown (i)  $G\backslash \TTT$ is a \it tree \rm and (ii) $G \backslash \TTT$ has a vertex with (trivial) stabilizer $Z$. By the theory of groups acting on trees
\cite[Theorem 13, p.55]{Serre} $G$ (resp. $PGL_2(A))$ is a free product (resp. amalgamated free product) when $q=2$ (resp. $q>2$). 

\begin{Thm}\label{generators} \cite{Fouxe}
Let $G=A_1\star A_2 \star\cdots\star A_n$ be a free product where the factors $A_i$ cannot be split further 
as free products and no factor $A_i$ is infinite cyclic. Then a system of generators of the automorphism group 
$\mathrm{Aut}(G)$ is given by all automorphisms of the following types (in the numbering of \cite{Fouxe}).
\\ \\
\noindent (1) Maps $\varphi_i$ which restrict to an element of $\mathrm{Aut}(A_i)$ and restrict to the identity 
on $A_j$, where $j \neq i$. \\
\noindent (2) Maps $\alpha_{ij}$ which restrict to the identity on $A_k$, where $k \neq j$. For some  $a_i \in A_i$,
$$ \alpha_{ij}(a_j)=a_i a_j a_i^{-1},$$
 for all $a_j \in A_j$. \\
 
\noindent (8) For each pair $(i,j)$ for which there is an isomorphism $\omega$ from $A_i$ to $A_j$,  the map 
$\omega_{ij}$ which interchanges $A_i$ and $A_j$  
and restricts to the identity on $A_k$ according to
$$\omega_{ij}(g)=\left\{
\begin{array}{lcl}\omega(g) & \hbox{\it if} & g\in A_i,\\
\omega^{-1}(g) & \hbox{\it if} & g\in A_j,\\
g & \hbox{\it if} & g\in A_k \hbox{\it\ with\ } k\neq i,j.\\
\end{array}\right.$$
\end{Thm}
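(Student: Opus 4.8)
The plan is to treat this as the classical Fouxe--Rabinovitch description of the automorphism group of a free product, so that ultimately I would quote it from \cite{Fouxe}; but the argument behind it runs as follows. The first ingredient is the essential uniqueness of the decomposition. Because each factor $A_i$ is freely indecomposable and not infinite cyclic, $G=A_1\star A_2\star\cdots\star A_n$ is a Grushko decomposition, and by the Kurosh subgroup theorem the conjugacy classes of the factors are invariants of $G$. Hence an arbitrary $\varphi\in\mathrm{Aut}(G)$ permutes these conjugacy classes: there is a permutation $\pi$, necessarily preserving isomorphism type, with $\varphi(A_i)$ conjugate to $A_{\pi(i)}$. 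The permutation automorphisms of type (8) realize every permutation within a block of mutually isomorphic factors, so after composing $\varphi$ with a suitable product of type-(8) maps I may assume $\pi=\mathrm{id}$, i.e. $\varphi(A_i)=g_iA_ig_i^{-1}$ for some $g_i\in G$.

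The second step is to straighten this $\varphi$ to the identity on each factor using the partial conjugations of type (2) and the factor automorphisms of type (1). For this I would introduce the syllable length $|g|$ of $g\in G$ relative to the free-product normal form, and measure the complexity of $\varphi$ by the total length of the images of a fixed finite set of factor generators. The goal is a Whitehead-style peak-reduction lemma: if $\varphi$ is not already a product of type-(1) automorphisms, then one of the elementary generators $\alpha_{ij}$ (a partial conjugation, cf. Lemma \ref{partialconjugate}) strictly decreases the complexity. Iterating this reduction expresses $\varphi$, modulo type-(1) and type-(8) maps, as a product of type-(2) maps, which proves that the three families generate $\mathrm{Aut}(G)$.

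The main obstacle is precisely this peak-reduction step. The subtlety is that a single elementary move may temporarily \emph{increase} length before it can be decreased; the core combinatorial lemma---the free-product analog of Whitehead's lemma for free groups---asserts that such peaks can always be avoided, so that whenever a product of elementary moves lands at a configuration of smaller complexity there is already one elementary move that does not increase it. Establishing this demands careful bookkeeping of the cancellation between the normal forms of the $g_i$ and the factor elements, and it is here that the hypotheses are used: freely indecomposable factors guarantee the clean Grushko decomposition, and the absence of infinite cyclic factors is what keeps the list of generators down to types (1), (2), (8) (an infinite cyclic factor would force additional Nielsen-type generators into the statement). A more modern route would replace the combinatorial peak reduction by the connectivity of the spine of the relevant outer space for free products (McCullough--Miller, Guirardel--Levitt), from which the same generating set falls out; but for the present purposes it suffices to cite \cite{Fouxe}.
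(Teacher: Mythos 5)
Your proposal is correct and takes essentially the same route as the paper: the paper's own ``proof'' consists of citing the Theorem on p.~268 of \cite{Fouxe} (with the relations on pp.~269, 275 noted) and pointing to \cite{MemoirAMS} for a less opaque treatment, exactly as you defer to \cite{Fouxe} while sketching the Kurosh/Grushko rigidity and peak-reduction argument that underlies those sources. The only substantive detail the paper includes that you omit is the closing observation that one type-(8) automorphism per isomorphic pair $(i,j)$ suffices, since the others arise by composing with type-(1) automorphisms.
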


\begin{proof} 
This is the content of the Theorem on page 268 of \cite{Fouxe}. Moreover, the same paper on 
page 269 gives a set of relations $(12)$ to $(21)$, which by the Theorem on \cite[p.275]{Fouxe} 
generate all relations in $\mathrm{Aut}(G)$.
\par
For a slightly less opaque approach to these results which is based on the action of $G$ on a certain 
simplicial complex $K(G)$ see \cite{MemoirAMS}.
\par
As for the automorphisms of type (8), it suffices to take one for each pair $(i,j)$ with $A_i \cong A_j$; 
the other ones are obtained by composing it with automorphisms of type (1).
\end{proof}

\begin{Rk}\label{Rkconjugate}
One question immediately comes to mind: Why not conjugate $A_j$ with an arbitrary element $g$ of $G$?
The answer is that this would be an injective group endomorphism of $G$, but in general {\it not} surjective.
\end{Rk}

We explain this with the simplest theoretically possible example.

\begin{Exm}\label{dihedral}
Consider
$$D_\infty =\langle a\ |\ a^2 =1\rangle\star\langle b\ |\ b^2 =1\rangle=\langle ab\rangle\rtimes\langle b\rangle.$$
Conjugating $\langle a\rangle$ with $ab$ we obtain
$$\langle ababa\rangle \star \langle b\rangle = \langle (ab)^3\rangle\rtimes\langle b\rangle,$$
which is a subgroup of index $3$. \\

\noindent A group which is not isomorphic to any of its proper subgroups is called \it co-Hopfian. \rm
\end{Exm}

\begin{Exm}\label{Ex3cusps}
Let $A=\FF_2[x,y]$, where $$y^2+y=x^3.$$
This elliptic curve has $3$ $\FF_2$-rational points, namely $\infty$, $(0,0)$ and $(0,1)$.  
(The $L$-polynomial of this elliptic function field is $L_K(u)=1+2u^2$.) 
\par
Using the main results and the notation from \cite{Takahashi} we obtain the quotient graph 
$G\setminus\TTT$ as follow.
\\ \\ 
{\unitlength0.5cm
\begin{picture}(0,36)
\thicklines
\put(9,34){\circle{1.0}\makebox(-2,0){2}}
\put(3,34){\circle{1.0}\makebox(-2,0){1}}
\put(15,34){\circle{1.0}\makebox(-2,0){3}}
\put(21,34){\circle{1.0}\makebox(-2,0){4}}
\put(9,28){\circle{1.0}\makebox(-2,0){5}}
\put(9,22){\circle{1.0}\makebox(-2,0){6}}

\put(4,18.5){\circle{1.0}\makebox(-2,0){7}}
\put(13.9,18.5){\circle{1.0}\makebox(-2,0){8}}

\put(3.5,34){\line(1,0){5}}
\put(14.5,34){\line(-1,0){5}}
\put(15.5,34){\line(1,0){5}}
\put(9,33.5){\line(0,-1){5}}
\put(21.5,34){\line(1,0){3}}
\put(9,27.5){\line(0,-1){5}}

\put(4.5,18.7){\line(4,3){4}}
\put(13.4,18.7){\line(-4,3){4}}

\put(2,35){$e(\infty)$}
\put(7.5,35){$c(\infty,1)$}
\put(13.5,35){$c(\infty,2)$}
\put(19.5,35){$c(\infty,3)$}
\put(10,27.8){$v(\infty)$}
\put(10,22){$o$}
\put(2.1,19.3){$v(1)$}
\put(14.1,19.3){$v(0)$}

\put(25,34){\circle{.1}}\put(25.5,34){\circle{.1}}\put(26,34){\circle{.1}}\put(26.5,34){\circle{.1}}

\put(19.9,18.5){\circle{1.0}\makebox(-2,0){9}}
\put(13.9,12.5){\circle{1.0}\makebox(-2,0){10}}
\put(14.4,18.5){\line(1,0){5}}
\put(13.9,18){\line(0,-1){5}}
\put(20.4,18.5){\line(1,0){3}}
\put(24,18.5){\circle{.1}}\put(24.5,18.5){\circle{.1}}\put(25,18.5){\circle{.1}}\put(25.5,18.5){\circle{.1}}
\put(13.9,12){\line(0,-1){3}}
\put(13.9,8.5){\circle{.1}}\put(13.9,8){\circle{.1}}\put(13.9,7.5){\circle{.1}}\put(13.9,7){\circle{.1}}
\put(18.3,19.5){$c((0,0),1)$}
\put(9.5,12.3){$c((0,1),1)$}
\put(29,33.9){$\infty$}
\put(27.5,18.3){$(0,0)$}
\put(13.1,4){$(0,1)$}

\end{picture}}

\noindent The structure of the vertex stabilizers for the above can be inferred from \cite[Theorem 5]{Takahashi}. 
\begin{itemize}
\item[(i)] the stabilizer of $e(\infty)$, $S(e(\infty))=GL_2(\FF_2)$ ($\FF_2 \subset A$)
\item[(ii)] $S(c(\infty,1))=S(v(\infty))=\langle{1\ 1\choose 0\ 1}\rangle$, 
\item[(iii)] for $n \geq 2,\; S(c(\infty,n))=\{{1\ a\choose 0\ 1}:\mathrm{deg}(a) \leq n\}$.
\item[(iv)] $S(o)$ is trivial,
\item [(v)] $S(v(1))$ is cyclic order $3$,
\item[(vi)] all edges adjacent to $c(\infty,1)$ have stabilizer $\langle{1\ 1\choose 0\ 1}\rangle$,
\item[(vii)] all edges adjacent to $o$ have trivial stabilizer,
\item[(viii)] $S(v(0))$ is trivial, and hence all edges attached to $v(0)$ have trivial stabilizer,
\item[(ix)] $S(c((0,0),n))$ is an $\FF_2$-vector space of dimension $n$,
\item[(x)] $S(c((0,1),n))$ is an $\FF_2$-vector space of dimension $n$.
\end{itemize}
\noindent In (iii) the \it degree \rm of $a$ refers to its (function field) degree as an element of a $A$.
\par
From the fact that the central vertex $o$ and the vertex $v(0)$ both have trivial stabilizer we obtain 
a splitting of $G$ as a free product
$$G=H\star \langle M_1 \rangle \star A_0 \star A_1$$
where $M_1$ has order $3$, 
$H=GL_2(\FF_2)\star_{B_2(\FF_2)}B_2(A)\cong GL_2(\FF_2[t])$, and 
$A_0$ and $A_1$ (the stabilizers of the cusps $(0,0)$ and $(0,1)$) are $\FF_2$-vector spaces of 
countably infinite dimension.
\par
Now we can apply Theorem \ref{generators} to this free product and obtain a set of generators of 
$\mathrm{Aut}(G)$.
\par
In particular, this shows that the action of $\mathrm{Aut}(G)$ on the cusps has $2$ orbits, one consisting 
of $(0,0)$ and $(0,1)$ and one consisting of the cusp $\infty$. So the action of $\mathrm{Aut}(G)$ on 
the cusps described in Theorem \ref{permcusp} is in general not transitive.
\end{Exm}

\begin{Rk}\label{importantremarks}

\noindent  (a) The quotient graph $G\setminus\TTT$ does not really know the stabilizer of a vertex, only
the conjugacy class of that stabilizer. Take Example \ref{Ex3cusps}. The maximal finite subgroups of order
$3$ are precisely the conjugates of $\langle M_1\rangle$. But once we choose the upper triangular matrices 
as the stabilizer of the cusp $\infty$, the matrix $M_1$ is more or less fixed (not just up to conjugacy).  
This is the meaning of lifting a maximal tree in $G\setminus\TTT$ to $\TTT$. From a different point of view 
we have just seen that we cannot replace $M_1$ by an arbitrary conjugate of it if we want the free product 
to be the full group $G$.
\\

\noindent (b) Similar care is of course required in the slightly more general situations of 
Lemma \ref{amalgam2} and Theorem \ref{amalgam1}.
\\

\noindent (c) Another point we have seen in Example \ref{Ex3cusps} is that the group $C_0$ from 
Lemma \ref{amalgam2} might split further as a free product. In general we have no a priori knowledge 
about this.
\end{Rk}

\begin{Exm}\label{Ex1cusp}
Let $A=\FF_2[x,y]$, where $$y^2+y=x^3+x+1.$$
This is the unique ellliptic case over $\FF_2$ with only one cusp.  Equivalently, $A$ is a principal ideal domain.
(The $L$-polynomial of this elliptic function field is $L_K(u)=1-2u+2u^2$.) Serre \cite[2.4.4, p.115]{Serre}, using 
the theory of vector bundles, has determined the shape of $G\setminus\TTT$. Alternatively, this can again be 
obtained from \cite{Takahashi}.
\\ \\ \\
{\unitlength0.5cm
\begin{picture}(0,21)
\thicklines
\put(9,19){\circle{1.0}\makebox(-2,0){2}}
\put(3,19){\circle{1.0}\makebox(-2,0){1}}
\put(15,19){\circle{1.0}\makebox(-2,0){3}}
\put(21,19){\circle{1.0}\makebox(-2,0){4}}
\put(9,13){\circle{1.0}\makebox(-2,0){5}}
\put(9,7){\circle{1.0}\makebox(-2,0){6}}

\put(28,19){$\infty$}

\put(4,3.5){\circle{1.0}\makebox(-2,0){7}}
\put(13.9,3.5){\circle{1.0}\makebox(-2,0){8}}

\put(3.5,19){\line(1,0){5}}
\put(14.5,19){\line(-1,0){5}}
\put(15.5,19){\line(1,0){5}}
\put(9,18.5){\line(0,-1){5}}
\put(21.5,19){\line(1,0){3}}
\put(9,12.5){\line(0,-1){5}}

\put(4.5,3.7){\line(4,3){4}}
\put(13.4,3.7){\line(-4,3){4}}

\put(2,20){$e(\infty)$}
\put(7.5,20){$c(\infty,1)$}
\put(13.5,20){$c(\infty,2)$}
\put(19.5,20){$c(\infty,3)$}
\put(10,12.8){$v(\infty)$}
\put(10,7){$o$}
\put(1.4,3.3){$v(1)$}
\put(15,3.3){$v(0)$}

\put(25,19){\circle{.1}}\put(25.5,19){\circle{.1}}\put(26,19){\circle{.1}}\put(26.5,19){\circle{.1}}

\end{picture}}

The quotient graph $G\setminus\TTT$ looks like the picture of the graph in Example \ref{Ex3cusps}, 
except that vertex $8$ now is a terminal vertex, i.e. the cusps $(0,0)$ and $(0,1)$ are completely removed 
(including the vertices $9$ and $10$ and the edges attached to them). Also, the stabilizer of vertex 
$8$ now is cyclic of order $3$.
\par
By the same arguments as in Example \ref{Ex3cusps} we obtain
$$ G=H\star\langle M_0\rangle \star\langle M_1\rangle,$$
where $H\cong GL_2(\FF_2)\star_{U(\FF_2)} U(A) \cong GL_2(\FF_2[t])$ and
$\langle M_0 \rangle$ and $\langle M_1\rangle$ are two non-conjugate groups of order $3$.
So again we can apply Theorem \ref{generators} to describe $\mathrm{Aut}(G)$.
However, for this special case it is also possible to provide a set of generators for $\mathrm{Aut}(G)$ 
without using \cite{Fouxe}.
\end{Exm}

\begin{Thm}\label{Aut(G)}
Let $A=\FF_2[x,y]$ with
$$y^2 +y=x^3 +x+1.$$
Then the following automorphisms of $G=GL_2(A)$
\begin{itemize}
\item[{(a)}] the inner automorphisms of $G$;
\item[{(b)}] the Reiner automorphisms $\rho_\infty$;
\item[{(c)}] the cyclic spike automorphisms $\overline{\mathrm{CS}(G)}$;
\end{itemize}
together form a set of generators for $\mathrm{Aut}(G)$.
\end{Thm}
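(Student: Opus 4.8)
The plan is to exploit the free product decomposition
$$G=H\star\langle M_0\rangle \star\langle M_1\rangle$$
established in Example \ref{Ex1cusp}, where $H\cong GL_2(\FF_2[t])$ and $\langle M_0\rangle, \langle M_1\rangle$ are non-conjugate cyclic groups of order $3$, and then apply the Fouxe-Rabinovitch generators from Theorem \ref{generators}. The strategy is to take an arbitrary $\alpha\in\mathrm{Aut}(G)$, write it (via Theorem \ref{generators}) as a product of generators of types (1), (2) and (8) relative to this decomposition, and to show that each such generator lies in the subgroup generated by the automorphisms in (a), (b), (c). Since inner, Reiner, and cyclic spike automorphisms are all honest automorphisms of $G$, it suffices to check that the generators of each Fouxe-Rabinovitch type are accounted for. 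Note that the factors $H$, $\langle M_0\rangle$, $\langle M_1\rangle$ satisfy the hypotheses of Theorem \ref{generators}: by Lemma \ref{notfree}(b) the factor $H\cong GL_2(\FF_2[t])$ does not split further as a free product, the two cyclic factors are already indecomposable, and no factor is infinite cyclic since $\langle M_i\rangle$ has order $3$.

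First I would handle the type (1) automorphisms, those restricting to an automorphism of a single factor. For the factor $H\cong GL_2(\FF_2[t])$, Proposition \ref{Reinerq=2} tells us that every automorphism of $H$ is a product of an inner automorphism of $H$ and a Reiner automorphism $\rho_\infty$; the inner part of $H$ extends to an inner automorphism of $G$ (type (a)), and $\rho_\infty$ is precisely a type (c)... no, a type (b) generator. For the cyclic factors $\langle M_i\rangle\cong\ZZ/3\ZZ$, an automorphism inverting $M_i$ while fixing the other factors is exactly the cyclic spike automorphism $\psi^i$ from the Definition after Theorem \ref{autocyclic}, hence lies in $\overline{\mathrm{CS}(G)}$ (type (c)). Next, the type (8) generator interchanging $\langle M_0\rangle$ and $\langle M_1\rangle$ via an isomorphism is the cyclic spike automorphism $\psi^{0\leftrightarrow 1}$ of Theorem \ref{autocyclic}, again in $\overline{\mathrm{CS}(G)}$. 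Finally, the type (2) partial conjugations $\alpha_{ij}$ are covered: those among the two cyclic factors are the partial conjugations adjoined in Definition \ref{cyclicspikeauto} to form $\overline{\mathrm{CS}(G)}$, while partial conjugations involving $H$ must be shown to reduce to inner automorphisms combined with the already-available generators.

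The main obstacle will be the bookkeeping of the type (2) partial conjugations, specifically those of the form $\alpha_{ij}$ where the conjugating element $g_i$ lies in $H$ and the conjugated factor is $\langle M_j\rangle$ (or vice versa, or where one of the factors is $H$ itself). In the abstract free product these are genuinely distinct from inner automorphisms of $G$, yet the list of permitted generators (a)--(c) only provides full inner automorphisms of $G$ and partial conjugations internal to the cyclic spikes. The key point I would argue is that a product of all the partial conjugations $\alpha_{i0}$ and $\alpha_{i1}$ using the \emph{same} element $g_i$ is the inner automorphism by $g_i$, so that any single partial conjugation can be rewritten as an inner automorphism times compensating partial conjugations involving only the cyclic factors; since the latter are in $\overline{\mathrm{CS}(G)}$ and the former is of type (a), the claim follows. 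Here one must be careful, in the spirit of Remark \ref{Rkconjugate} and Remark \ref{importantremarks}, that conjugating a single cyclic factor by an arbitrary element need not be surjective on its own, but the partial conjugation as defined (fixing all other factors) genuinely is an automorphism by Lemma \ref{partialconjugate}, so the decomposition is legitimate. Assembling these reductions shows that every Fouxe-Rabinovitch generator, and hence every automorphism of $G$, lies in the subgroup generated by (a), (b) and (c), completing the proof.
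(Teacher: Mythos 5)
Your skeleton is the same as the paper's: the free product $G=H\star\langle M_0\rangle\star\langle M_1\rangle$ of Example \ref{Ex1cusp}, Theorem \ref{generators}, and a check that each generator of types (1), (2), (8) is a product of automorphisms from (a), (b), (c). (Your explicit verification of the hypotheses of Theorem \ref{generators} via Lemma \ref{notfree}(b) is a point the paper leaves implicit.) Your treatment of type (8) and of type (1) for the two cyclic factors is correct. The gap is the identity on which your last paragraph rests. You claim that for a fixed element $g_i$ the product of the partial conjugations $\alpha_{i0}$ and $\alpha_{i1}$ is the inner automorphism by $g_i$. That is true only when $g_i$ lies in one of the abelian factors $\langle M_j\rangle$. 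For $g\in H$, which is exactly the case you need, it is false: writing $\beta_j$ for the partial conjugation of $\langle M_j\rangle$ by $g$ and $\varphi_g$ for the automorphism that restricts to conjugation by $g$ on $H$ and to the identity on $\langle M_0\rangle\star\langle M_1\rangle$, one has
$$\mathrm{inn}_g=\varphi_g\,\beta_0\,\beta_1,$$
and $\varphi_g\neq\mathrm{id}$ whenever $g\neq 1$, because $H\cong GL_2(\FF_2[t])$ has trivial centre (as $\FF_2^*=\{1\}$). So $\beta_0\beta_1\neq\mathrm{inn}_g$, and your proposed rewriting of a single $\beta_j$ as ``inner times cyclic-internal partial conjugations'' fails. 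Worse, it fails circularly: the missing factor $\varphi_g$ is exactly the type (1) generator for $H$ whose construction you base on the claim that ``the inner part of $H$ extends to an inner automorphism of $G$'' --- but type (1) requires extending $\mathrm{inn}_g|_H$ by the \emph{identity} on the cyclic factors, i.e.\ $\varphi_g$, which differs from $\mathrm{inn}_g$ by precisely the $\beta_j$ you have not yet produced. Each of your two reductions presupposes the other.

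The repair --- and the paper's actual argument --- comes from reading Definition \ref{cyclicspikeauto} correctly. Since $q=2$ makes $Z$ trivial, the decomposition of Lemma \ref{amalgam2} in this example is exactly $H\star\langle M_0\rangle\star\langle M_1\rangle$ with $C_0=H$, and $\overline{\mathrm{CS}(G)}$ is defined to contain \emph{all} partial conjugations of this free product: also the $\beta_j$ above (conjugating element in $H$, conjugated factor cyclic) and the ones conjugating $H$ by an element of a cyclic factor, not merely those internal to the cyclic spikes. With this, every type (2) generator lies in (c) outright, and the type (1) generators for $H$ are obtained as the paper does: write an automorphism of $H$ as $\sigma\tau$ by Proposition \ref{Reinerq=2} and replace it by $\sigma\alpha_1\alpha_2\tau$, where $\alpha_i\in\overline{\mathrm{CS}(G)}$ conjugates $\langle M_i\rangle$ by $h^{-1}$; this has the same effect on $H$ as $\sigma\tau$ while being the identity on $\langle M_0\rangle\star\langle M_1\rangle$. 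After this correction your argument coincides with the paper's proof.
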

\begin{proof}
The wreath product described in Corollary \ref{Wreath} contains the automorphisms that switch
$\langle M_0 \rangle$ and $\langle M_1 \rangle$ as well as any automorphism of $\langle M_i \rangle$
extended by the identity on the other two free factors. By Definition \ref{cyclicspikeauto} the cyclic spike 
automorphisms also
contain the automorphisms that are the identity on two of the three free factors while the remaining 
factor is conjugated with an element from the other two. So we still need the automorphisms of $H$
extended by the identity on $\langle M_0 \rangle\star\langle M_1 \rangle$.
\par
From $H\cong GL_2(\FF_2[t])$ and Proposition \ref{Reinerq=2} we see that every automorphism of $H$ 
has the form $\sigma\tau$ where $\tau$ is a Reiner automorphism and $\sigma$ is conjugation with
an element $h$ from $H$. The Reiner automorphism $\tau$ acts as identity on 
$\langle M_0 \rangle\star\langle M_1 \rangle$, but $\sigma$ in general will not. We replace $\sigma\tau$
with $\sigma\alpha_1 \alpha_2 \tau$ where the cyclic spike automorphism $\alpha_i$ denotes 
conjugating $\langle M_i \rangle$ with $h^{-1}$ and fixing the other two factors. Then 
$\sigma\alpha_1 \alpha_2 \tau$ has the same effect on $H$ as $\sigma\tau$ while being the identity on
$\langle M_0 \rangle\star\langle M_1 \rangle$.
\end{proof}

 \bigskip

\end{document}